\newenvironment{listi}
  {\begin{list}
 {(\roman{broj})}
{ \usecounter{broj}}
  \setlength{\labelwidth}{30pt}
  \addtolength{\itemsep}{2pt}}
{   \end{list} }
\newcounter{broj}
\DeclareMathAlphabet{\mathup}{OT1}{\familydefault}{m}{n}
\newcommand{\di}[1]{\mathop{}\!\mathup{d} #1}
\newcommand{\dx}[1]{\mathop{}\!\mathup{d} #1}
\newcommand{\pderiv}[3][]{\frac{\mathop{}\!\mathup{d}^{#1} #2}{\mathop{}\!\mathup{d} #3^{#1}}}
\newcommand{\N}{\mathbb{N}}
\newcommand{\R}{\mathbb{R}}
\newcommand{\Rd}{{\mathbb{R}^{d}}}
\newcommand{\Rdd}{{\mathbb{R}^{2d}}}
\newcommand{\cA}{\mathcal{A}}
\newcommand{\cD}{\mathcal{D}}
\newcommand{\cE}{\mathcal{E}}
\newcommand{\cF}{\mathcal{F}}
\newcommand{\cG}{\mathcal{G}}
\newcommand{\cH}{\mathcal{H}}
\newcommand{\cM}{\mathcal{M}}
\newcommand{\mP}{\mathcal{P}}
\newcommand{\cP}{\mathcal{P}}
\newcommand{\cV}{\mathcal{V}}
\newcommand{\cS}{\mathcal{S}}
\newcommand{\mt}{\mathcal{T}}
\newcommand{\cT}{\mathcal{T}}
\newcommand{\bj}{\bm{j}}
\newcommand{\dgrad}{\overline\nabla}
\newcommand{\eps}{\varepsilon}
\newcommand{\brho}{\bm{\rho}}
\newtheorem{theorem}{Theorem}[section]
\newtheorem{corollary}[theorem]{Corollary}
\newtheorem{lemma}[theorem]{Lemma}
\newtheorem{proposition}[theorem]{Proposition}
\newtheorem{definition}[theorem]{Definition}
\theoremstyle{remark}
\newtheorem{rem}[theorem]{Remark}
\newtheorem{exm}[theorem]{Example}
\numberwithin{equation}{section}
\DeclareMathOperator{\AC}{AC}
\DeclareMathOperator{\grad}{grad}
\DeclareMathOperator{\Diff}{Diff}
\DeclareMathOperator{\CE}{CE}
\DeclareMathOperator{\loc}{loc}
\DeclareMathOperator{\supp}{supp}
\DeclareMathOperator*{\argmin}{arg\,min}
\DeclareMathOperator{\Div}{div}
\DeclareMathOperator*{\esssupmu}{\mu-ess\,sup}
\DeclarePairedDelimiter{\abs}{\lvert}{\rvert}
\DeclarePairedDelimiter{\norm}{\lVert}{\rVert}
\DeclarePairedDelimiter{\bra}{(}{)}
\DeclarePairedDelimiter{\set}{\{}{\}}
\title{nonlocal-interaction equation on graphs: \\ gradient flow structure and continuum limit}
\begin{document}
\author{Antonio Esposito \and Francesco S. Patacchini \and Andr\'e Schlichting \and  Dejan Slep\v{c}ev}
\address{Antonio Esposito -- 
Department Mathematik, Friedrich-Alexander-Universit\"at Erlangen-N\"urnberg, Cauerstrasse 11,
91058 Erlangen, Germany}
\address{Francesco S. Patacchini -- Department of Mathematical Sciences, Carnegie Mellon University, Pittsburgh, PA 15213, USA; IFP Energies nouvelles, 1 et 4 avenue de Bois-Pr\'eau, 92852 Rueil-Malmaison, France}
\address{Andr\'e Schlichting -- Institute of Applied Mathematics, University of Bonn, Endenicher Allee 60, D-53115 Bonn, Germany}
\address{Dejan Slep\v{c}ev -- Department of Mathematical Sciences, Carnegie Mellon University, Pittsburgh, PA 15213, USA}
\email{antonio.esposito@fau.de}
\email{fpatacch@math.cmu.edu; francesco.patacchini@ifpen.fr}
\email{schlichting@iam.uni-bonn.de}
\email{slepcev@math.cmu.edu}

\begin{abstract}
We consider dynamics driven by interaction energies on graphs. We introduce graph analogues of the continuum nonlocal-interaction equation and interpret them as gradient flows with respect to a graph Wasserstein distance. The particular Wasserstein distance we consider arises from the graph analogue of the Benamou--Brenier formulation where the graph continuity equation uses an upwind interpolation to define the density along the edges. While this approach has both theoretical and computational advantages, the resulting distance is only a quasi-metric. We investigate this quasi-metric both on graphs and on more general structures where the set of ``vertices'' is an arbitrary positive measure. 
We call the resulting gradient flow of the nonlocal-interaction energy the nonlocal nonlocal-interaction equation (NL$^2$IE).
We develop the existence theory for the solutions of the NL$^2$IE as curves of maximal slope with respect to the upwind Wasserstein quasi-metric. Furthermore, we show that the solutions of the NL$^2$IE on graphs converge as the empirical measures of the set of vertices converge weakly, which establishes a valuable discrete-to-continuum convergence result.
\end{abstract}

\keywords{nonlocal interaction, upwind scheme, gradient descent, discrete to continuum limit, graph Wasserstein, jump process}
\subjclass[2010]{49J40, 60J27, 60J25, 45G10, 49J45, 35S10, 28A33}

\date{\today}

\maketitle

\tableofcontents

\section*{Notation}
We list here some symbols used throughout the paper. 
\begin{itemize}
  \item $\cM(A)$ is the set of Borel measures on $A \subseteq \R^d$. 
  \item $\cM^+(A)$  is the set of non-negative Borel measures on $A$.
  \item $\mathcal{P}(A)\subset \cM^+(A)$ is the set of Borel probability measures on $A$.
  \item $\cP_{2}(A)\subseteq \cP(A)$ stands for the elements of $\mP(A)$ with finite second moment, that is,
    \begin{equation*}\label{eq:def:M2}
      M_2(\rho) := \int_{A} |x|^2 \di\rho(x) < \infty. 
    \end{equation*}
  \item $C_\mathrm{b}(A)$ is  the set of bounded continuous functions from $A$ to $\R$. 
  \item $a_+:=\max\{0,a\}$ and $a_-:=(-a)_+$ are the positive and negative parts of $a \in \R$.
  \item $\mu \in \cM^+(\R^d)$ sets the underlying geometry of the state space; it is sometimes referred to as base measure.
  \item $\rho\in \cP(\R^d)$ denotes a configuration; the natural setting is that $\supp \rho \subseteq \supp \mu$, although we allow for 
    general supports as needed for stability results.
  \item $\eta \colon \{ (x,y)\in \R^d \times \R^d : x\neq y \}\to [0,\infty)$ is the edge weight function. 
  \item $G= \{ (x,y) \in \R^d \times \R^d : x\neq y ,\, \eta(x,y)>0\}$ are the edges.
  \item $\rho_1\otimes \rho_2 \in \cM^+(G)$ is the product measure of $\rho_1, \rho_2 \in \cM^+(\R^d)$ restricted to $G$. 
  \item $\gamma_1 = \rho \otimes \mu$ and $\gamma_2 = \mu \otimes \rho$.
  \item $\cV^{\mathrm{as}}(G)$
    is the set of antisymmetric graph vector fields on $G$, defined in \eqref{eq:nonloc-vect-field-set}.
  \item $\dgrad f$ is the nonlocal gradient of a function $f \colon \R^d \to \R$, while $\dgrad \cdot \bj$ is the nonlocal divergence of a measure-valued flux $\bj\in \cM(G)$; see Definition \ref{def:nl_grad_div}. 
  \item $\cA$ stands for the action functional; see Definition~\ref{def:action}.
  \item $\mt$ denotes the nonlocal transportation quasi-metric; see \eqref{eq:nonloc-upwind-transp-cost}.
  \item $\CE_T(\rho_0,\rho_1)$ denotes the set of paths (solutions to the nonlocal continuity equation for densities~\eqref{eq:nce} or measures~\eqref{eq:nlce_measures}) on the time interval $[0,T]$ connecting two measures $\rho_0, \rho_1\in \cP(\R^d)$; we set $\CE:=\CE_1$.
\end{itemize}
Let us also specify the notions of \emph{narrow convergence} and \emph{convolution}. 
A sequence $(\rho^n)_n\subset \cM(A)$ is said to converge narrowly to $\rho\in \cM(A)$, in which case we write $\rho^n \rightharpoonup \rho$, provided that
\[
 \forall f\in C_\mathrm{b}(A), \qquad \qquad \int_A f \di{\rho^n} \to \int_A f \di{\rho} \qquad \textrm{as } n \to \infty. 
\]
Given a function $f \colon A \times A \to \R$ and $\rho \in \cM(A)$, we write $f*\rho$ the convolution of $f$ and $\rho$, that is,
\begin{equation*}
    f*\rho(x)=\int_A f(x,y)\di\rho(y) \quad \text{for any $x \in A$ such that the right-hand side exists}.
\end{equation*}
%
%
%
\section{Introduction}
We investigate dynamics driven by interaction energies on graphs, and their continuum limits. 
We interpret the relevant dynamics as gradient flows of the interaction energy with respect to a particular graph analogue of the Wasserstein distance. We prove the convergence of the dynamics on finite graphs to a continuum dynamics as the number of vertices goes to infinity. To do so we create a unified setup where the continuum and the discrete dynamics are both seen as particular instances of the gradient flow of the same energy, with respect to a nonlocal Wasserstein quasi-metric whose state space is adapted to the configuration space considered. 

Let us first introduce the problem on finite graphs where it is the simplest to describe. 
\subsection{Graph setting with general interactions}  \label{sec:gen-graph}
Consider an undirected graph with vertices $X =\{x_1, \dots, x_n\}$ and edge weights $w_{x,y} \geq 0$, satisfying $w_{x,y} = w_{y,x}$ for all $x,y \in X$. Although technically not necessary, we impose the natural requirement that $w_{x,x}=0$. The interaction potential is a symmetric function $K \colon X \times X \to \R$, while the external potential is denoted $P\colon X \to \R$. We consider a ``mass'' distribution $\rho\colon X \to [0, \infty)$, and we require $\sum_{x \in X} \rho_x =1$. The total energy~$\cE_X\colon \cP(X)\to \R$ is a combination of the interaction energy $\cE_I$ and the potential energy $\cE_P$:
\begin{equation} \label{eq:discE}
    \cE_X(\rho) = \cE_{I}(\rho) + \cE_P(\rho) =  \frac12 \sum_{x \in X} \sum_{y \in X} K_{x,y} \rho_x \rho_y + \sum_{x \in X} P_x \rho_x. 
\end{equation}
The gradient descent of $\cE_X$ that we study is described by the following system of ODE for the mass distribution:
\begin{align}
     \frac{\di \rho_x}{\di t} &= - \frac{1}{2} \sum_{y\in X} \bra[\big]{j_{x,y} - j_{y,x}} w_{x,y}, 
     \label{eq:intro:CE} \\
      j_{x,y} &= \frac{1}{n} \bra[\big]{\rho_x (v_{x,y})_+ - \rho_y (v_{x,y})_-},  \label{eq:intro:flux-velocity}\\
        v_{x,y} & = -\sum_{z\in X}  \rho_z (K_{y,z}-K_{x,z}) - (P_y - P_x). \label{eq:intro:vE}
\end{align}
The quantities $v\colon X \times X \to \R$ and $j:X \times X \to \R$ are defined on edges and model the graph analogues of velocity and flux. 
An evolution by such system is illustrated on Figure \ref{fig:two-moon}.
\begin{figure}[ht]
  \includegraphics[width=\textwidth]{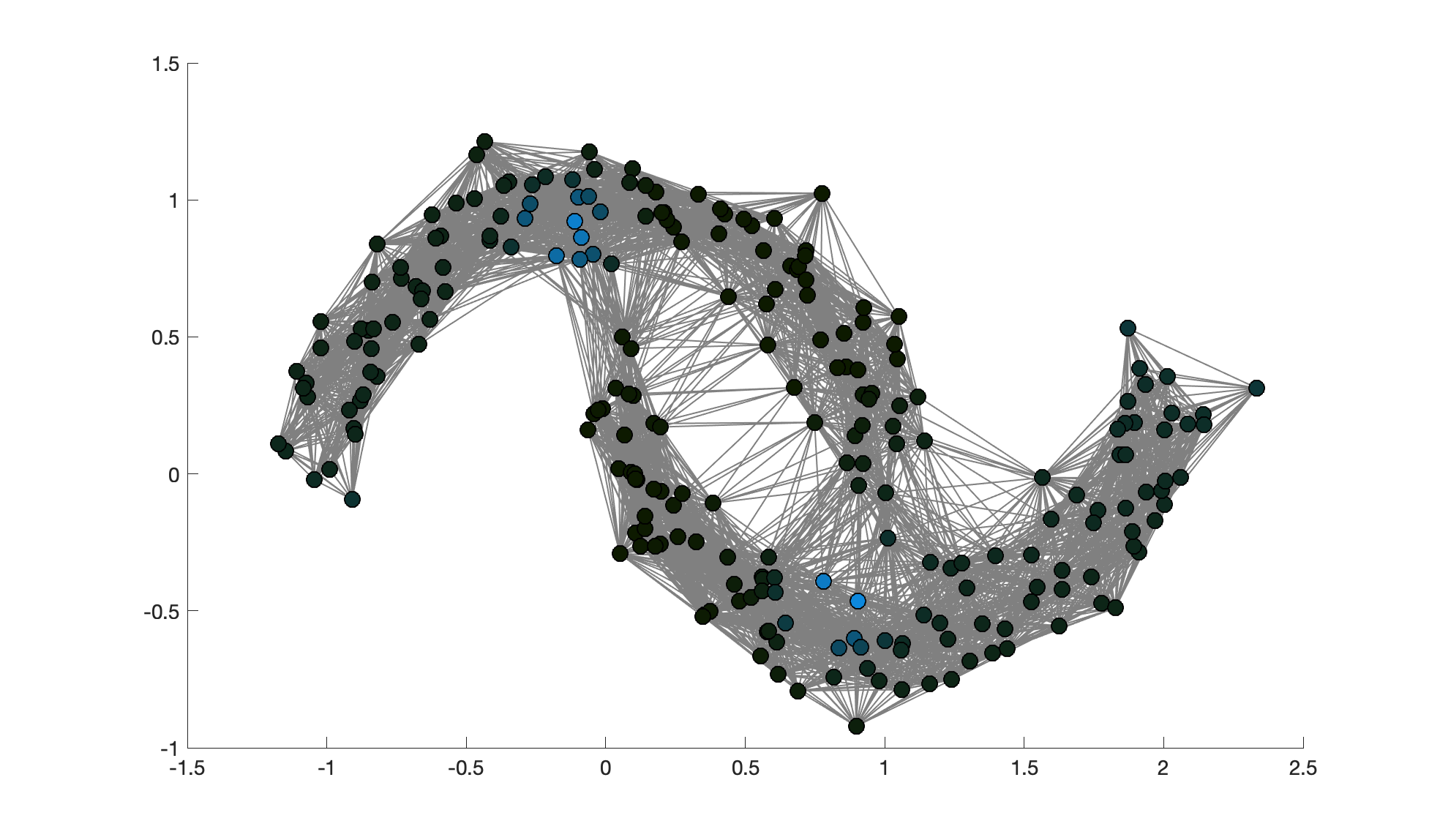}
  \caption{A solution of the nonlocal-interaction equation on graphs driven by the energy \eqref{eq:discE}. We consider a graph based on 240 sample points $X$ from a 2D two-moon data set. 
  The connectivity distance is $\varepsilon=0.7$.
  The edge weights are $w_{x,y} = \exp(-6|x-y|^2) $ if $|x-y| \leq \eps$ and zero otherwise. 
  The interaction potential is $K_{x,y} = 1-\exp(-d(x,y)^2/10)$, where $d(x,y)$ is the graph distance between vertices $x$ and $y$ of~$X$ with edge weights $1/w_{x,y}$, and the external potential is $P\equiv 0$. 
  The solution, starting from a uniform distribution, is shown at time $t=60$. Brighter color indicates more mass. }\label{fig:two-moon}
\end{figure} 
The system~\eqref{eq:intro:CE}--\eqref{eq:intro:vE} is the gradient flow of the energy $\cE_X$ with respect to a new graph equivalent of the Wasserstein metric. The concept of Wasserstein metrics on finite graphs were introduced independently by Chow, Huang, Li, and Zhou \cite{CHLZ12}, Maas \cite{Maas11}, and Mielke \cite{Mielke2011gradient,Miel13}. All of the approaches rely on graph analogues of the continuity equation to describe the paths in the configuration space. On graphs the mass is distributed over the vertices and is exchanged over the edges. Hence, the analogues of the vector field and the flux are defined over the edges. However, the flux should be the product of the velocity (an edge-based quantity) by the density (a vertex-based quantity). Thus, one has to interpolate the densities at vertices to define the density (and hence the flux) along the edges. The choice of interpolation is not unique, and has important ramifications. 

While the overall structure of our setup is derived from one in~\cite{Maas11}, which we recall in Section~\ref{subsec:flux:SG}, the form of the interpolation used is related to the upwind interpolation used in~\cite{CHLZ12} and is almost identical to one in \cite{ChTrTa18}. While in~\cite{CHLZ12} the authors considered only the direction of the flux due to the potential energy to determine which density to use on the edges, in our case the density chosen depends on the total velocity and we furthermore include the interaction term which itself depends on the configuration. In particular, we use an upwind interpolation based on the total velocity. In the context of graph Wasserstein distance, such interpolation was first used by Chen, Georgiou, and Tannenbaum
in \cite{ChTrTa18}.

The ``velocities'' $v$ we consider 
can be assumed to be antisymmetric: $v_{x,y} = - v_{y,x}$ for all $x,y \in X$. 
In the graph setting, which we normalize in order to consider limit $n \to \infty$, the continuity equation with upwind interpolation is obtained by combining~\eqref{eq:intro:CE} with the flux-velocity relation~\eqref{eq:intro:flux-velocity}.
Similarly to \cite{Maas11} and exactly as in \cite{ChTrTa18}, we define the graph Wasserstein distance by minimizing the action, which is the graph analogue of the kinetic energy:
\[ A(\rho,v) =  \frac{1}{n} \sum_{x \in X} \sum_{y \in X} (v_{x,y+})^2 w_{x,y} \rho_x. \]
As in \cite{ChTrTa18,CHLZ12,Maas11,Miel13}, the graph Wasserstein distance is defined by adapting the Benamou--Brenier formula:
\[ \mt(\rho^0,\rho^1)^2 = \inf_{(\rho,v) \in  \CE_X(\rho^0,\rho^1)} \int_0^1 A(\rho(t), v(t)) \di{t},  \] 
where $\CE_X(\rho^0,\rho^1)$ is the set of all paths (i.e., solutions of \eqref{eq:intro:CE}--\eqref{eq:intro:vE}) connecting $\rho^0$ and $\rho^1$. 

It is important to observe that, in our setting, $\mt$ is not symmetric (that is, $\mt(\rho^0,\rho^1)$ is in general different from $\mt(\rho^1,\rho^0)$). The reason is that in general $A(\rho,v) \neq A(\rho, -v)$. Therefore the nonlocal Wasserstein distance which arises from the upwind interpolation is only a quasi-metric. The action~$A(\rho,v)$ provides a Finsler structure to the tangent space, instead of the usual  Riemannian structure. Formally the system \eqref{eq:intro:CE}--\eqref{eq:intro:vE} is the gradient flow of 
$\cE_X$ with respect to this Finsler structure; we present a derivation of this fact in a more general setting in Section \ref{subsec:HeuristicFinsler}. The system is also  the curve of steepest descent with respect to quasi-metric $\mt$, which is the point of view we use to create rigorous theory in the general setting.  
\begin{rem}\label{rem:existence-strong-discrete}
The well-posedness of \eqref{eq:intro:CE}--\eqref{eq:intro:vE}  is a straightforward consequence of the Picard existence theorem. Namely, note that the simplex $1 \geq \rho_x \geq 0$, $\sum_{x \in X} \rho_x =1$ is an invariant region of the dynamics and that on it the vector field \eqref{eq:intro:vE} is Lipschitz continuous in $\rho_x$, $x \in X$. 
\end{rem}
\begin{rem} \label{rem:other-interp}
One could consider other interpolations instead of the upwind one. In particular, if we considered an interpolation of the form $I(\rho_x, \rho_y)$ instead of the upwind one, the only change in the gradient flow would be that the velocity-flux relation \eqref{eq:intro:flux-velocity} would become
$      j_{x,y} =  \frac{1}{n} I(\rho_x, \rho_y) v_{x,y} $. We note that this can have major implications on the resulting dynamics. In particular, for the logarithmic interpolation, $I(r,s) = (r-s)/(\ln r - \ln s)$, or the geometric interpolation, $I(r,s) = \sqrt{rs}$, the resulting dynamics would never expand the support of the solutions, so even for repulsive potentials the mass may not spread throughout the domain. On the other hand, using the arithmetic interpolation, $I(r,s) =(r+s)/2$, would not work directly since the solutions may become negative. In this case additional technical steps, like a Lagrange multiplicator as in~\cite{NataleTodeschi20}, are necessary to obtain the evolution of a non-negative probability density. We use the more physical inspired upwind flux, which automatically ensures the positivity of the density.
\end{rem}
Before we turn to the general setting we point out that the system~\eqref{eq:intro:CE}--\eqref{eq:intro:vE} offers a new model of graph-based clustering, which is briefly discussed in Section \ref{sec:ML}.
\subsection{General setting for vertices in Euclidean space}
Here we introduce the general framework for studies of interaction equations on families of graphs and their limits as the number of vertices~$n$ goes to $\infty$. In particular, in the applications to machine learning which we briefly discuss in Section~\ref{sec:ML}, the graphs considered are random samples of some underlying measure in Euclidean space, and the edge weights, as well as the interaction energy, depend on the positions of the vertices. 
The vertices are points in $\R^d$.
The edges are given in terms of a non-negative symmetric weight function $\eta \colon  \{ (x,y) \in \R^d \times \R^d : x \neq y \} \to [0, \infty)$, which defines the set of edges as $G=\set{ (x,y)\in \R^d\times \R^d : x\neq y , \,\eta(x,y)>0}$. From the discrete setting, the set of vertices is replaced by the more general notion of a measure on $\R^d$; the discrete graphs with vertices $X = \{x_1, \dots, x_n\} \subset \R^d$ correspond to $\mu$ being the empirical measure of the set of points,  $\mu = \frac{1}{n} \sum_{i=1}^n \delta_{x_i}$. The distribution of mass over the vertices is described by the measure $\rho \in \cP(\R^d)$ and in most applications we consider $\supp \rho \subseteq \supp \mu$. However, in order to prove general stability results (e.g., Theorem \ref{thm:graph:stability}), we need to allow that initially part of the support of $\rho$ is outside of the support of $\mu$; we think of such mass as outside of the domain specified by $\mu$. The mass starting outside of the support of $\mu$ can only flow into the support of $\mu$. Here we present the evolution assuming  $\rho \ll \mu$, while in Sections \ref{sec:ce-ut} and \ref{sec:NLIE} we present the setup in full generality. Furthermore, we denote by $\rho$ both the measure and its density with respect to $\mu$.

The evolution of interest is the gradient descent of the energy $\cE:\cP(\R^d)\to \R$ given by
\begin{equation}\label{e:energy:intro}
    \cE(\rho)= \frac{1}{2}\int_\Rd\int_\Rd K(x,y)\di\rho(x)\di\rho(y) + \int_\Rd P(x) \di\rho(x),
\end{equation}
where  $K\colon \Rd \times \Rd \to \R$ is symmetric and $P:\Rd \to \R$. This energy generalizes \eqref{eq:discE} in terms of the configurations $\rho$ and specializes it in terms of the type of potentials $K$ and $P$ considered. In fact, from now on we omit the subscripts $X$ referring to the vertices (e.g.\ in the energy) since our general setting allows for distribution of mass outside of the support of $\mu$.
The gradient flow we consider takes the form
\begin{align}\label{eq:nlnl-intro}
\begin{split}
    \partial_t\rho_t(x) & = - \int_\Rd j_t(x,y) \eta(x,y) \di\mu(y) =: - (\dgrad \cdot j_t) (x),  \\
    j_t(x,y) &= \rho_t(x) v_t(x,y)_+ -  \rho_t(y) v_t(x,y)_-, \\
    v_t(x,y) &= - \left(K*\rho_t(y) - K*\rho_t(x) + P(y) - P(x)\right).
\end{split} \tag{NL$^2$IE}
\end{align}
The system \eqref{eq:nlnl-intro} consists first of a nonlocal continuity equation, where the divergence $\dgrad \cdot$ is encoded with the graph structure described through $\mu$ and $\eta$ (see Definition~\ref{def:nl_grad_div}). Secondly, it involves a mapping from velocity to flux, which in our case is the upwind flux and encodes the geometry of the gradient structure. Finally, the third equation identifies the driving velocity as the nonlocal gradient of the variation of the energy~\eqref{e:energy:intro}.
Overall, we obtain that~\eqref{eq:nlnl-intro} is the gradient flow of the energy $\cE$ with respect to a generalization of the graph Wasserstein metric we now introduce. 
\subsubsection*{Nonlocal continuity equation}
Let us set
\begin{equation}\label{eq:nonloc-vect-field-set}
    \cV^{\mathrm{as}}(G) := \{v\colon G\to\R : v(x,y) = -v(y,x)\ \text{for all}\ (x,y)\in G\} 
\end{equation}
and call its elements \emph{nonlocal (antisymmetric) vector fields} on $G$; for any pair $(x,y) \in G$ the value~$v(x,y)$ can be regarded as a jump rate from $x$ to $y$. Let us fix a final time $T>0$ throughout the paper and let a family $\{v_t\}_{t\in[0,T]}\subset\cV^{\mathrm{as}}(G)$ be given. In the case $\rho_t \ll \mu$ for all $t\in [0,T]$, it is possible to combine the first two equations in~\eqref{eq:nlnl-intro} in order to arrive at the nonlocal continuity equation
\begin{equation}\label{eq:nce}
    \partial_t\rho_t(x)+\int_\Rd\left(\rho_t(x)v_t(x,y)_+-\rho_t(y)v_t(x,y)_-\right)\eta(x,y)\di \mu(y) = 0,\quad  \mu\text{-a.e.\ } x \in \Rd.
\end{equation}
For general curves $\rho\colon [0,T] \to \cP(\Rd)$, it is necessary to consider the weak form of~\eqref{eq:nce}, which is discussed in Section~\ref{subsec:nonloc-cont-eq}.

We remark that the general setup we develop allows for the solution $\rho$ to develop atoms and persist even after the atoms have formed. Heuristic arguments and numerical experiments indicate that there are equations covered by our theory for which this is the case. For example, if $\mu$ is the Lebesgue measure on $\R$, $\rho_0$ the restriction of the Lebesgue measure to $[-0.5,0.5]$, $K(x,y) = |x-y|$ and $\eta(x,y)= 1/(x-y)^2$, then the solutions develop delta mass concentrations at $0$ in finite time. Understanding for which $K$ and $\eta$ solutions do develop finite time singularities is an interesting open problem.

We note that when defining the flux in \eqref{eq:nce} we define the density along edges to be the density at the source; analogously to an upwind numerical scheme. 
While, as we show, this leads to a convenient framework to consider the dynamics, it creates the difficulty that the resulting distance, that we are about to define, is not symmetric and is thus only a quasi-metric. 
\subsubsection*{Upwind nonlocal transportation metric}
We use the nonlocal continuity equation~\eqref{eq:nce} to define a \emph{nonlocal Wasserstein quasi-distance} in analogy to the Benamou--Brenier formulation~\cite{BB2000} for the classical Kantorovich--Wasserstein distances~\cite{V1}. That is, for two probability measures $\rho_0,\rho_1\in\mP_2(\Rd)$, let
\begin{equation}\label{eq:ben-bre-formula}
    \mt_\mu(\rho_0,\rho_1)^2:=\inf_{(\rho,v)\in \CE(\rho_0,\rho_1)}\left\{\int_0^1\iint_G|v_t(x,y)_+|^2\eta(x,y)\di\rho_t(x)\di\mu(y)\di t\right\},
\end{equation}
where $\CE(\rho_0,\rho_1)$ is the set of weak solutions $\rho$ to the nonlocal continuity equation (see Definition~\ref{def:nce-flux-form}) on $[0,1]$ with $\rho(0)=\rho_0$ and $\rho(1)=\rho_1$. We note that the notion of the nonlocal Wasserstein distance for measures on $\R^d$ was introduced  by  Erbar in \cite{Erb14}, who used it to study the fractional heat equation. One difference is that the interpolation we consider is beyond the scope of \cite{Erb14}. 
Very recently \cite{PeletierRossiSavareTse2020} has extended the gradient flow viewpoint of the jump processes to
generalized gradient structures driven by a broad class of internal energies.

Another difference is that here the measure $\mu$ plays an important role in how the action is measured and allows one to incorporate seamlessly both the continuum case (e.g., $\mu$ is the Lebesgue measure on~$\R^d$) and the graph case ($\mu$ is the empirical measure of the set of vertices).

The notions above are rigorously developed in Section \ref{sec:ce-ut}, where we list the precise assumption~\eqref{it:as:pos-sym-lsc} on the edge weight function $\eta$ and the joint assumptions~\ref{it:as-conv} and~\ref{it:as-tight} on $\eta$ and the underlying measure $\mu$. We then rigorously introduce the action (Definition \ref{def:action}), which is a nonlocal analogue of kinetic energy; we show its fundamental properties, in particular joint convexity (Lemma \ref{lem:convexity}) and lower semicontinuity with respect to narrow convergence (Lemma \ref{lem:l.s.c.action}).
In Section \ref{subsec:nonloc-cont-eq} we rigorously introduce the nonlocal continuity equation in measure-valued flux form \eqref{eq:nlce_measures}; we introduce the notion on all of $\R^d$ where $\mu$ does not initially play a role. The measure $\mu$ enters the framework by considering paths of finite action.
Proposition \ref{prop:compactness-sol-ce} establishes an important compactness property of sequences of solutions.
In Section \ref{sec:nl-trans} we turn our attention to the nonlocal Wasserstein quasi-metric based on the upwind interpolation, which we introduce in Definition \ref{defn:metric}. The compactness of solutions of the nonlocal continuity equation and the lower semicontinuity of the action imply the 
existence of (directed) geodesics (Proposition \ref{prop:min-metric}). 
Following the work of Erbar \cite{Erb14} we show that the nonlocal Wasserstein quasi-metric generates a topology on the set of probability measures which is stronger than the $W_1$ topology (i.e., the Monge distance or the $1$-Wasserstein metric). 
Analogously to \cite{AGS} we show the equivalence between the paths of finite length with respect to the quasi-metric and the solutions of the nonlocal continuity equation with finite action (Proposition~\ref{prop:min-metric}). 
The set of probability measures endowed with the quasi-metric $\mt$ has a formal structure of a Finsler manifold, and parts of this structure can be described; in particular, in \eqref{eq:charact:tangent} we describe the tangent space at a given measure $\rho$ using the fluxes. We note that using fluxes, instead of velocities, is necessary since, because of the upwinding, the relation between the velocities and the tangent vectors is not linear (Proposition \ref{prop:tangent-bundle}) and in particular not symmetric. 
For this reason the resulting gradient structure is also different to the large class of nonlinear, however still symmetric, flux-velocity relations considered in~\cite{PeletierRossiSavareTse2020}.
We conclude Section \ref{sec:ce-ut} by showing that, given a measure $\mu$, the finiteness of the action ensures that any path starting within the support of $\mu$ will remain within the support of~$\mu$ (Proposition \ref{prop:supp-in-mu}).
\subsubsection*{Nonlocal nonlocal-interaction equation}
In Section~\ref{sec:NLIE} we develop the existence theory of the equation~\eqref{eq:nlnl-intro} based on the interpretation as the gradient flow of $\cE$ with respect to the quasi-metric~$\mt$ defined in \eqref{eq:ben-bre-formula}.
We begin by listing the precise conditions~\ref{as:K:cont}--\ref{as:K:LipQuad} on the interaction kernel~$K$. We note that these are less restrictive than the typical conditions for the well-posedness of the standard nonlocal-interaction equation in Euclidean setting
\cite{AGS, CDFLS11}.

Before we turn to the rigorous theory of weak solutions as curves of maximal slope on quasi-metric space, we discuss the gradient flow structure in a more geometric setting, namely the Finsler structure related to~$\mt$. 
Indeed, the action (formally given by the time integrand in \eqref{eq:ben-bre-formula}, and rigorously defined by \eqref{eq:def-action}) defines a positively homogeneous norm (namely a Minkowski norm) on the tangent space. The Hessian of the square of the norm endows the tangent space at each measure with the formal structure of a Riemann manifold. We compute this Riemann metric in Appendix ~\ref{app:minkowski} under an absolute-continuity assumption. With this assumption, we show that \eqref{eq:nlnl-intro} is the gradient flow of $\cE$ with respect to the Finsler structure in Section \ref{subsec:HeuristicFinsler}. For simplicity, we consider $P \equiv 0$, since the extension to $P \not\equiv 0$ is straightforward, as it is explained in Remark~\ref{rem:ExtPot}.
 
In Section \ref{sec:DeGiorgi} we develop the rigorous gradient descent formulation based on curves of maximal slope in the space of probability measures endowed with the quasi-metric $\mt$. The theory of gradient flows in the spaces of probability measures endowed with the standard Wasserstein metric was developed in \cite{AGS}. Here we extend it to the setting of quasi-metric spaces, endowed with the nonlocal Wasserstein distance. This requires several delicate arguments. We start by introducing the notions of one-sided strong upper gradient (Definition \ref{eq:def-strong-upper-gr}) and curves of maximal slope (Definition~\ref{defn:ls-Giorgi}). We define the local slope $\cD$ in \eqref{eq:def:dissipation} by using a heuristically derived gradient of the energy $\cE$, and show, using a chain rule established in Proposition \ref{prop:chain-rule}, that 
$\sqrt{\cD}$ is a one-sided strong upper gradient for $\cE$ with respect to $\cT$. One of our main results is Theorem \ref{thm:CurveMaxSlope}, which establishes the equivalence between curves of maximal slope and weak solutions of \eqref{eq:nlnl-intro}. 
In Section \ref{sec:stab-exist} we prove several important results. Namely Theorem \ref{thm:graph:stability} establishes
that the De Giorgi functional~$\cG_T$ is stable under variations of the base measure $\mu$ and of the solutions. A consequence of this result is the convergence of solutions of \eqref{eq:nlnl-intro} on graphs defined on random samples of a measure to solutions of \eqref{eq:nlnl-intro} corresponding to the full underlying measure (Remark \ref{rem:DC-data}). The proof of Theorem \ref{thm:graph:stability} relies on 
the lower semicontinuity of the local slope (Lemma \ref{lem:lsc-dissipation}) and 
the lower semicontinuity of the De Giorgi functional (\ref{lem:lsc-Giorgi}). Another important consequence is the existence of weak solutions of  \eqref{eq:nlnl-intro}, which is proved in Theorem \ref{thm:existence-weak}.  
\subsection{Relation to the numerical finite-volume upwind scheme}\label{subsec:numerics}
Equation \eqref{eq:nce} can be interpreted in several ways. For example, it can be understood as the master equation of a continuous-time and continuous-space Markov jump process on the graphon $(\Rd, \eta)$, that is, a continuous graph with vertices $\Rd$, and symmetric weight $\eta(x,y)$ for $(x,y)\in \set{(x,y)\in \R^d\times \R^d: x\ne y}$. The stochastic interpretation is that a particle at position $x\in\R^d$ jumps according to the measure $v(x,y)_+\eta(x,y)\di\mu(y)$ to $y\in\R^d$. In this way it gives rise to a Markov jump process related to the numerical upwind scheme.

The numerical upwind scheme is one of the basic finite-volume methods used to solve  conservation laws; see \cite{EGH2000}. 
To draw the connection, let $\set{x_1, \dots, x_n}$ be a suitable representative of a tessellation $\set{K_1,\dots,K_n}$, for instance a Voronoi tessellation, of some bounded domain $\Omega\subset \R^d$. Let $\mu$ be the Lebesgue measure on $\Omega$ and take $\eta$ to be the transmission coefficient common in finite-volume schemes: $\eta(x_i,x_j) = \mathcal{H}^{d-1}(\overline{K_i}\cap \overline{K_j})/\operatorname{Leb}(K_i)$, for $i,j\in\{1,\dots,n\}$, where $\cH^{d-1}(\overline{K_i}\cap \overline{K_j})$ is the~$d-1$~dimensional Hausdorff measure of the common face between $K_i$ and $K_j$.
With this choice the equation \eqref{eq:nce} becomes the (continuous-time) discretization of the classical continuity equation
\[
  \partial_t \rho_t + \nabla \cdot \bra*{ \mathbf{v}_t \, \rho_t } = 0 ,
\]
for some vector field $\mathbf{v}_t\colon \Omega \to \R^d$. Hereby, the discretized vector field $v_t$ is obtained from $\mathbf{v}_t$ by taking the average over common interfaces:
\[
  v_t(x_i,x_j) = \frac{1}{\mathcal{H}^{d-1}(\overline{K_i}\cap \overline{K_j})} \int_{\overline{K_i}\cap \overline{K_j}} \mathbf{v}_t \cdot \nu_{K_i,K_j} \di\mathcal{H}^{d-1},
\]
where $\nu_{K_i,K_j}$ is the unit normal to $K_i$ pointing from $K_i$ to $K_j$. We refer to the recent work~\cite{Cances2019} for a variational interpretation of the upwind scheme, which is close to that we propose for the more general equation~\eqref{eq:nce}. Earlier results in this direction are contained in~\cite{Miel13,DL2015}.

The connection to finite-volume schemes explains also that the nonlocality in~\eqref{eq:nce} introduces a regularization, which in the numerical literature is referred to as \emph{numerical diffusion}. That the numerical diffusion is actually an honest Markov jump process, as described at the beginning of this section, was observed and used to find optimal convergence rates in the works~\cite{DelarueLagoutiere11,DelarueLagoutiereVauchelet16,SS2017,SS2018}. 
\subsection{Comparison with other discrete metrics and gradient structures}\label{subsec:flux:SG}
The interpretation of diffusion on graphs as gradient flows of the entropy was independently carried out in~\cite{Maas11,Mielke2011gradient,CHLZ12}. Here we recall the descriptions of the flows relying on reversible Markov chains, which was the framework used in~\cite{EFLS2016,ErbMaa14,Maas11}. Starting with Markov chains, which then determine the edge weights, offers an additional layer of modeling flexibility. In particular, consider the Markov chain with state space $X = \{x_1, \dots, x_n\}$ and  jump rates
$\set{Q_{x,y}}_{x,y\in X}$. Let 
$\pi_x$ be the reversible probability measure for the Markov chain, meaning that it satisfies the detailed balance condition $\pi_x Q_{x,y} = \pi_y Q_{y,x}$. The edge weights $\set{w_{x,y}}_{x,y\in X}$ are given by $w_{x,y}=\pi_x Q_{x,y}$.  The energy considered is the relative entropy: for $\rho\colon X \to [0,1]$ with $\sum_{x \in X} \rho_x = 1$ we define
\begin{equation}\label{e:relEnt}
  \cH(\rho \mid \pi) = \sum_{x} \rho_x \log \frac{\rho_x}{\pi_x} = \sum_{x} \rho_x \log \rho_x - \sum_{x} \rho_x \log \pi_x = \cS(\rho) + \cE_{P}(\rho) \quad\text{with}\quad P_x = -\log \pi_x . 
\end{equation}
The paths in the configuration space are given as the solution of the continuity equation which for the flux $\set{j_{x,y}\colon [0,T]\to \R}_{x,y\in X}$ takes the form \eqref{eq:intro:CE}.

To compute the flux from a given velocity $\set{v_{x,y}}_{x,y\in X}$ (an edge-based quantity) and density $\set{\rho_x}_{x\in X}$ (a vertex-based quantity), one interpolates the densities at vertices to define the density (and hence the flux) along the edges. The literature so far has considered a proportional constitutive relation of the form
\begin{equation}\label{e:flux:velocity:MC}
  j_{x,y} = v_{x,y} \, \theta\bra*{\frac{\rho_x}{\pi_x},\frac{\rho_y}{\pi_y}} ,
\end{equation}
where the function $\theta:\R_+\times \R_+\to \R_+$ needs to be one-homogeneous for dimensional reasons. In addition, it is assumed that the function $\theta$ is an interpolation, that is, $\min\set{a,b}\leq \theta(a,b)\leq \max\set{a,b}$. The choice providing a gradient flow characterization for linear Markov chains is the logarithmic mean, defined by $\theta(a,b)= \frac{a-b}{\log a - \log b}$ for $a \ne b$ and $\theta(a,a)=a$. 

The associated transportation distance is obtained by minimizing the action functional
\begin{align}
  \cA(\rho,j) = \frac{1}{2}\sum_{x,y} j_{x,y} \, v_{x,y} \, w_{x,y}
  &= \frac{1}{2} \sum_{x,y} \frac{\abs*{ j_{x,y}}^2}{\theta\bra*{\frac{\rho_x}{\pi_x},\frac{\rho_y}{\pi_y}}} \pi_x Q_{x,y} .\label{e:action:discrete}
\end{align}
The corresponding transportation distance is induced as the minimum of the action along paths:
\begin{equation*}
  \cT^{\theta}(\rho_0,\rho_1) = \inf\set*{ \int_0^1 \cA(\rho(t),j(t)) \dx{t} : \bra[\big]{\rho(t),j(t)}_{t\in [0,1]} \text{ solves~\eqref{eq:intro:CE} and } \rho(0)=\rho_0,\, \rho(1)=\rho_1 } .
\end{equation*}
As we do in Corollary \ref{cor:antsym_vect_field_lower_action}, it was shown that it suffices to consider antisymmetric fluxes. 
To arrive at a gradient flow formulation, one considers the metric induced by the action function~\eqref{e:action:discrete}:
\begin{equation}\label{e:metric:discret}
  g_\rho(j^1,j^2) = \frac{1}{2} \sum_{x,y} \frac{j_{x,y}^1 \, j_{x,y}^2}{\theta\bra*{\frac{\rho_x}{\pi_x},\frac{\rho_y}{\pi_y}}} \pi_x Q_{x,y} .
\end{equation}
Then the gradient $\grad \cH$ of the relative entropy~\eqref{e:relEnt} with respect to this metric is given as the antisymmetric flux $j^*$ of minimal norm satisfying
\begin{equation}\label{e:def:grad:discrete}
  g_\rho(\grad \cH,j) = \Diff \cH[j] = \left.\pderiv{}{t}\right|_{t=0} \cH(\tilde \rho(t)) , 
\end{equation}
for any curve $(\tilde \rho(t))_{t\geq 0}$ such that $\partial_t \rho(0) = - \bra[\big]{\dgrad \cdot j}$. 
Expanding \eqref{e:def:grad:discrete} and using that $j^*$ is antisymmetric gives
\begin{equation*}
  \frac{1}{2} \sum_{x,y} \frac{j_{x,y} \, j^*_{x,y}}{\theta\bra*{\frac{\rho_x(t)}{\pi_x},\frac{\rho_y(t)}{\pi_y}}} \pi_x Q_{x,y}  = - \frac{1}{2}\sum_{x,y} \bra*{\log \frac{\rho_x(t)}{\pi_x} - \log\frac{\rho_y(t)}{\pi_y}} j_{x,y} \, w_{x,y}.
\end{equation*}
Since this identity holds for all $j_{x,y}$, the flux $j^*$ is identified by
\[
  j_{x,y}^* = - \bra*{\log \frac{\rho_x}{\pi_x} - \log \frac{\rho_y}{\pi_y}} \theta\bra*{\frac{\rho_x(t)}{\pi_x},\frac{\rho_y(t)}{\pi_y}} = -\bra*{\frac{\rho_x(t)}{\pi_x} - \frac{\rho_y(t)}{\pi_y}},
\]
where the last equality holds for the particular choice of the logarithmic mean interpolation 
$\theta(r,s) = \frac{r-s}{\ln r - \ln s}$. 
By plugging $j_{x,y}^*$ into the continuity equation~\eqref{eq:intro:CE}, one recovers the (linear) heat equation on graphs. 

The next relevant step is the introduction of the interaction and the potential energies as in~\eqref{eq:discE}. In particular, ~\cite{EFLS2016} provides a gradient structure for free energy functionals of the form
\begin{equation}\label{e:cF:discrete}
  \cF_{\beta}(\rho) = \beta^{-1} \cS(\rho) + \cE_X(\rho) ,
\end{equation}
where $\beta>0$ is the inverse temperature. This is nontrivial since considering the logarithmic interpolation, which makes the diffusion term linear, would make the potential term nonlinear, and thus the Fokker--Planck equation on graphs would be nonlinear. 
To cope with this, the framework of~\cite{EFLS2016} extends the linear theory outlined above to a family of nonlinear Markov chains satisfying a local detailed balance condition. 
The consequence for the resulting gradient structure is that the quantities $\set{\pi_x}_{x\in X}$, $\set*{Q_{x,y}}_{x,y\in X}$ and $\set*{w_{x,y}}_{x,y\in X}$ depend on the current state $\rho$ in such a way that the detailed balance condition $w_{x,y}[\rho] = \pi_x[\rho] Q_{x,y}[\rho] = \pi_y[\rho] Q_{y,x}[\rho] $ is still valid for all $\rho\in \cP(X)$.
In particular, for $\cF_\beta$ defined in~\eqref{e:cF:discrete}, it holds that 
\begin{equation*}
  \pi_x[\rho] = \frac{1}{Z_\beta} \exp\bra*{ - \beta\bra*{ P_x + \sum_{y} K_{x,y}\rho_y}} \quad\text{with}\quad Z_\beta= \sum_x \exp\bra*{ - \beta\bra*{ P_x + \sum_{y} K_{x,y}\rho_y}}.
\end{equation*}
It would be natural to try to build the framework for the case $\beta=\infty$, which we consider in this paper, by taking the limit $\beta \to \infty$ in the framework of~\cite{EFLS2016}. 
It turns out that this limit is singular for the constructed gradient structure. First of all, the measure $\pi_x[\rho]$ degenerates at all points except at the argmin of the effective potential $x\mapsto  P_x + \sum_{y} K_{x,y}\rho_y$. This causes the constitutive relation~\eqref{e:flux:velocity:MC} to become meaningless. A more detailed analysis also shows that the metric in~\eqref{e:metric:discret} degenerates. 

We also note that in this setting the potential functions $P$ and $K$ and inverse temperature $\beta$ enter the metric in~\eqref{e:action:discrete} through the weights $w_{x,y}$ and rate matrix $Q_{x,y}$. This is in stark contrast with the continuous classical gradient flow formulation for free energies of the form~$\cF_\beta$ form~\eqref{e:cF:discrete}, where the metric is always the $L^2$-Wasserstein distance, independently of the potentials $P$ and $K$ and also of the inverse temperature $\beta>0$, including $\beta=\infty$~\cite{AGS,JKO,CMcCV03,CDFLS11}.

The above problems lead us to consider the upwind interpolation in 
the flux-velocity relation~\eqref{e:flux:velocity:MC}. In view of~\eqref{eq:intro:CE}, this relation is replaced in the present setting by 
\begin{equation}\label{eq:upwind:flux}
  j_{x,y} = \rho_x (v_{x,y})_+ - \rho_y (v_{x,y})_- = \Theta(\rho_x, \rho_y; v_{x,y}) v_{x,y} \qquad \textrm{where } \quad
  \Theta(a,b; v) = \begin{cases}
                     a \; & \textrm{if }  v>0, \\
                     b  & \textrm{if }  v<0, \\
                     0 & \textrm{if } v=0.
                   \end{cases}
\end{equation}
Note that the relation~\eqref{eq:upwind:flux} is a functional relation between velocity and flux with the interpolation~$\Theta$ depending on the velocity. 

We remark that solutions of system \eqref{eq:intro:CE}--\eqref{eq:intro:vE}
are not the limit of the gradient flows in \cite{EFLS2016} as $\beta \to \infty$. We emphasize here that the limit of these dynamics as $\beta \to \infty$ would in fact not be the desirable gradient flow of the nonlocal-interaction energy, since the initial support of the solutions would never expand; see the related Remark \ref{rem:other-interp}. 

We conclude this section by observing that it seems possible to generalize the upwind interpolation in a continuous way to define a flux-velocity relation to deal with free energies $\cF_\beta$ for $\beta>0$.
A candidate, inspired by the Scharfetter--Gummel scheme \cite{SG1969}, is the following constitutive flux-velocity relation depending on $\beta$:
\begin{equation*}
  j^\beta_{x,y} = v_{x,y} \, \frac{\rho_x \exp\bra*{\beta v_{x,y}/2} - \rho_y \exp\bra*{-\beta v_{x,y}/2}}{ \exp\bra*{\beta v_{x,y}/2} -  \exp\bra*{-\beta v_{x,y}/2}}.
\end{equation*}
In particular, it holds that $j^\beta_{x,y} \to j_{x,y}$ as $\beta\to \infty$, where $j_{x,y}$ is as in~\eqref{eq:upwind:flux}. 
The form of $j^\beta_{x,y}$ can be physically deduced from the one-dimensional cell problem for the unknown value $j^\beta_{x,y}\in \R$ and function $\rho\colon [0,1]\to \R$:
\[
  j^\beta_{x,y} = -\beta^{-1} \dgrad \rho(\cdot) + v_{x,y} \, \rho(\cdot) \quad\text{on } [0,1], \qquad\text{with } \rho(0)= \rho_x \text{ and } \rho(1) = \rho_y . 
\]
Note that $j^\beta_{x,y} = \frac{\rho_x-\rho_y}{\beta}$ for $v_{x,y} =0$, which is the flux due to Fick's law. Likewise, $j^\beta_{x,y} = 0$ for $v_{x,y} = \beta^{-1} \log\frac{\rho_y}{\rho_x}$, which is the velocity needed to counteract the diffusion. 
In~\cite{SchlichtingSeis20}, it is shown that the Scharfetter--Gummel finite volume scheme provides a stable positivity preserving numerical approximation of the diffussion-aggregation equation, which also respects the thermodynamic free energy structure. We pursue the investigation of the existence of a possible related gradient structure in future research.
\subsection{Connections to data science}  \label{sec:ML}
Part of the motivation for the present work comes from applications to data science. 
Here we introduce a family of nonlinear gradient flows that is relevant to discovering local concentrations in networks akin to modes of a distribution. 
\begin{figure}[ht]
  \includegraphics[width=0.48\textwidth]{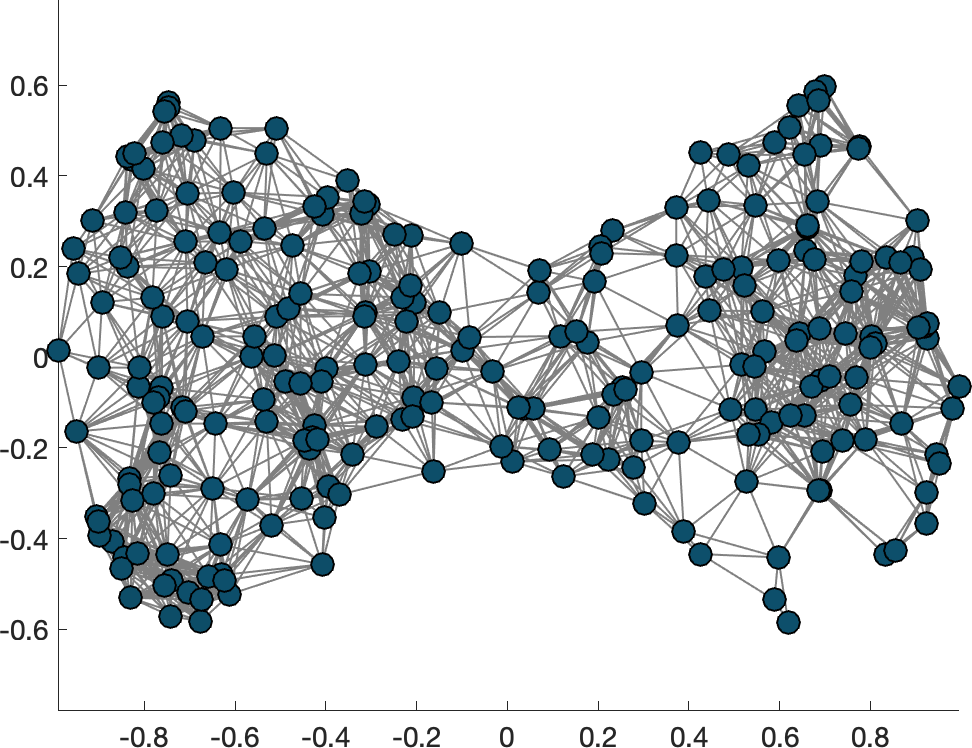} \hspace*{4pt}
  \includegraphics[width=0.48\textwidth]{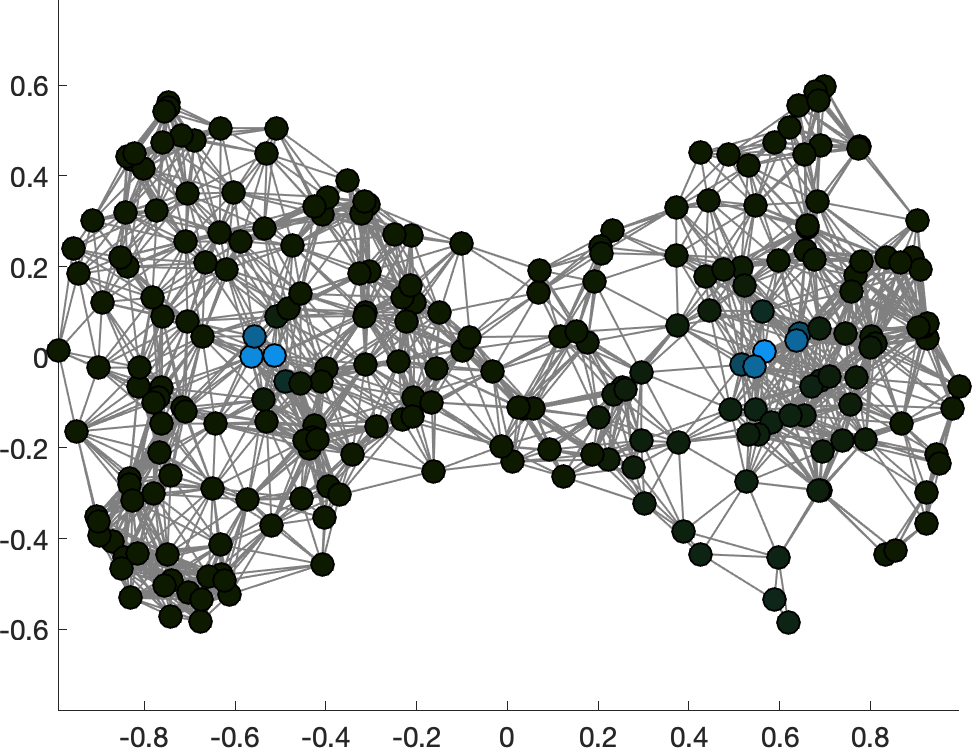}
  \caption{A solution of the nonlocal-interaction equation on graphs driven by the energy \eqref{eq:discE}. We consider a random geometric graph based on 240 sample points $X$ from a 2D bean data set. 
  The connectivity distance is $\varepsilon=0.23$.
  The edge weights are $w_{x,y} = \exp(-24|x-y|^2) $, provided that the vertices $x$ and $y$ of $X$ are connected.
  The interaction potential is $K_{x,y} = 1-\exp(-8|x-y|^2)$ and the external potential is $P\equiv0$. The solution, starting from a uniform distribution, is shown at time $t=200$.  Brighter color indicates more mass (right). 
  }\label{fig:NLIEbean}
\end{figure}

Our main interest is in equations posed on graphs whose vertices are random samples of some underlying distribution and whose edge weights are a function of distances between vertices. In machine learning one often deals with data in the form of a point cloud in high-dimensional space. While the ambient dimension may be very large, the data often possess an underlying low-dimensional structure that can be used in making reliable inferences about the underlying data distribution. To use the geometric information, we follow one of the standard approaches and consider graphs associated to point clouds. Formulating the machine learning tasks directly on the point cloud enables one to access the geometric structure of the distribution in a simple and computationally efficient way. The works in the literature have mostly focused on models based on minimizing objective functionals modeling tasks such as clustering or dimensionality reduction \cite{Belkin02laplacianeigenmaps, GTS16, GTSspectral, JordanNg, KanVemVet04}, or based on characterizing clusters through estimating some property of the data distribution (most often the density); see \cite{CDKvL14} and references therein. 
Only few dynamical models have been considered---notable among them are diffusion maps \cite{CoifmanLafon}, where the heat equation is used to redistance the points. 

Here we focus on models that are motivated by nonlocal PDEs.
Consider a probability measure $\mu$ on $\R^d$ with finite second moments. 
Let  $X =\{x_1, \dots, x_n\}$ be random i.i.d.\ samples of the measure $\mu$. Let $\mu^n = \frac{1}{n} \sum_{i=1}^n \delta_{x_i}$ be the empirical measure of the sample and let $K\colon \R^d \times \R^d \to \R$ be symmetric and $P:\R^d \to \R$. The total energy $\cE_X:\cP(X)\to \R$, given in~\eqref{eq:discE}, for the empirical measure $\mu^n$ can be rewritten as
\begin{equation}\label{eq:discE:empirical}
  \cE_X(\mu^n) = \cE_I(\mu^n) + \cE_P(\mu^n) =  \frac12  \iint_{\Rd\times\Rd} K(x,y) \di \mu^n(x) \di\mu^n(y) + \int_\Rd P(x) \di\mu^n(x). 
\end{equation}
The gradient flow of $\cE_X$  with respect to the graph Wasserstein metric $\mt_{\mu^n}$ defined in \eqref{eq:ben-bre-formula} is described by the  ODE system \eqref{eq:intro:CE}--\eqref{eq:intro:vE}, where $K_{x_i,x_j} = K(x_i,x_j)$ and $P_{x_i} =P(x_i)$ for all $i,j\in\{1,\dots,n\}$. Another evolution by such system is illustrated on Figure \ref{fig:NLIEbean}.

Here we remark on the contrast between \eqref{eq:intro:CE}--\eqref{eq:intro:vE} and the gradient flow of \eqref{eq:discE:empirical} in the ambient space~$\R^d$, with respect to the standard Wasserstein metric, which takes the form
\begin{equation} \label{eq:discRdGF}
\dot x_i =  - \nabla P(x_i)-   \sum_{j=1}^n \rho_j \nabla_{x_i}  K(x_i , x_j).
\end{equation}
The first notable difference is that, on the graph, masses change and the positions remain fixed, while in~$\R^d$ positions change and the masses remain fixed. This difference is somewhat superficial, since both equations describe the rearrangement of mass in order to decrease the same energy in the most efficient way measured by two different metrics. The main difference is that the graph encodes the geometry of the space that mass is allowed to occupy. In particular, it ensures that the geometric mode discovered will be a data point itself. 

We note that the popular mean-shift algorithm \cite{CMMS} can be interpreted as a time-stepping algorithm to approximate solutions of \eqref{eq:discRdGF} with $K\equiv 0$ and $P = \ln(\theta * \mu^n(0))$, where $\mu^n(0)$ is the empirical measure of the initial distribution of particles and  $\theta * \mu^n(0)$ is the kernel density estimate of the density $\brho$ of the underlying distribution. Namely the step of the mean-shift algorithm is to replace the position of the particle at $x_j$ by the center of mass of $\theta( \,\cdot \, - x_j)* \mu_n(0)$ and iterate the procedure. Formal expansion shows that this is a time step of the forward scheme for the flow driven by $P = \ln(\theta * \mu^n(0))$.
We note that considering the gradient flow of the corresponding energy on the graph~\eqref{eq:intro:CE}--\eqref{eq:intro:vE}
ensures that the modes of the distribution discovered by the (graph) mean-shift algorithm will remain within the data set. Furthermore, we note that adding nonlocal attraction on the graph progressively clumps nearby masses together and thus provides an approach to agglomerative clustering. 

One of our main results, stated in Theorem \ref{thm:graph:stability}, is that as $n \to \infty$ the solutions of the graph-based equation~\eqref{eq:intro:CE}--\eqref{eq:intro:vE} narrowly converge along a subsequence to a solution of the nonlocal nonlocal-interaction equation \eqref{eq:nlnl-intro}.

\section{Nonlocal continuity equation and upwind transportation metric} \label{sec:ce-ut}
\subsection{Weight function}\label{subsec:weight}
Throughout the paper we consider a \emph{weight function} $\eta\colon \{(x,y)\in \R^d\times \R^d : x\ne y\} \to[0,\infty)$, which shall always satisfy
\begin{equation}\label{it:as:pos-sym-lsc}\tag{\textbf{W}}
\begin{cases}
 \text{$\eta$ is continuous on $G=\set[\big]{(x,y) \in \Rd\times\Rd : x\neq y,\, \eta(x,y)>0}$;}\\
 \text{$\eta$ is symmetric and non-negative, that is $\eta(x,y)=\eta(y,x)$ for all $(x,y) \in G$.}
\end{cases}
\end{equation}
Since $\eta$ is symmetric, we regard the edges set $G$ as undirected graph. Many of the edge-based quantities we consider, like vector fields and fluxes, will lie in an $\eta$-weighted $L^2$ space, $L^2(\eta\, \lambda)$ for some $\lambda \in \cM(G)$.
The space $L^2(\eta\,\lambda)$ is equipped with the inner product 
\begin{equation}\label{eq:def:L2G}
  \langle f,g\rangle_{L^2(\eta\,\lambda)} = \frac{1}{2} \iint_G f(x,y) g(x,y) \eta(x,y) \dx\lambda(x,y) \qquad \text{for all } f,g\in L^2(\eta\,\lambda) , 
\end{equation}
where the factor $\frac12$ ensures that each undirected edge is counted only once. 

Below we state two assumptions on the base measure $\mu\in\cM^+(\Rd)$ and the weight function $\eta$, where we use the notation $\vee$ to denote the maximum.
\begin{enumerate}[label=\textbf{(A\arabic*)}]
\item\label{it:as-conv} (moment bound) The family of functions $\{\bra*{ |x-\cdot|^2 \vee |x-\cdot|^4 }\eta(x,\cdot)\}_{x\in\Rd}$ is uniformly integrable with respect to $\mu$, that is, for some $C_\eta \in (0,\infty)$ it holds
\begin{equation*}
  \sup_{x\in \R^d} \int \bra*{ |x-y|^2 \vee |x-y|^4 }  \, \eta(x,y)\di \mu(y) \leq C_\eta\, .
\end{equation*}
\item\label{it:as-tight} (local blow-up control) The family of measures $\{|x - \cdot|^2\eta(x,\cdot) \mu(\cdot)\}_{x\in\Rd}$ is locally uniformly integrable, that is, 
\begin{equation*}
  \lim_{\eps \to 0}  \sup_{x\in \R^d} \int_{B_\eps(x)\setminus\{x\}} \abs{x-y}^2 \, \eta(x,y) \di\mu(y)= 0, \quad\text{ where } B_\eps(x) = \set[\big]{y\in \R^d: |x-y|<\eps}.
\end{equation*}
\end{enumerate}
\begin{rem}
Continuity on $G$ in \eqref{it:as:pos-sym-lsc} is needed to obtain lower semicontinuity of the action functional; see Lemma~\ref{lem:l.s.c.action}. Assumption~\ref{it:as-conv} ensures well-posedness of the nonlocal continuity equation we shall introduce in Section~\ref{subsec:nonloc-cont-eq}, whereas Assumption~\ref{it:as-tight} is necessary for compactness of solutions to the nonlocal continuity equation; see Proposition~\ref{prop:compactness-sol-ce}.
\end{rem}
\begin{exm}
  Typically the function $\eta$ is a function of the distance: 
  \[
    \eta(x,y) = \vartheta\bigl(|x-y|) \quad \mbox{for all $(x,y)\in G$},
  \]
  where $\vartheta \colon (0,\infty) \to [0,\infty)$ is continuous on $\set{\vartheta>0}$ and satisfies analogues of~\ref{it:as-conv} and~\ref{it:as-tight}. An important example are geometric graphs with connectivity distance given by $\varepsilon>0$ and weight
  \begin{equation}\label{eq:def:eta:local}
        \eta_\varepsilon(x,y) = \frac{2(2+d)}{\varepsilon^2} \frac{\chi_{B_\varepsilon(x)}(y)}{|B_\varepsilon|} \quad \text{for all $(x,y) \in G$} .
  \end{equation}
In this example, fixing $\mu = \operatorname{Leb}(\Rd)$, we conjecture that the weak formulation of~\eqref{eq:nlnl-intro}---see Section \ref{sec:NLIE}---converges to the nonlocal aggregation equation $\partial_t \rho_t = \nabla \cdot\bra*{\rho _t \nabla K*\rho_t+ \rho_t \nabla P}$ as $\eps\to 0$ for sufficiently smooth potentials $K$ and $P$. See Section~\ref{subsec:local_limit} for a discussion on the local limit. 
\end{exm}
\subsection{Action}\label{subsec:action}
The form of the action inside \eqref{eq:ben-bre-formula} seems practical, but it does not have any obvious convexity and lower semicontinuity properties. Therefore, we define the action in flux variables. We start by introducing some notation.
For a signed measure $\bj\in\cM(G)$, we denote by $\bj=\bj^+-\bj^-$ its Jordan decomposition. Moreover, for any measurable $A\subseteq G$, let $A^\top=\{(y,x) \in \Rd\times\Rd: (x,y)\in A\}$ be its transpose. Likewise, for $\bj\in\cM(G)$, we denote by $\bj^\top$ the transposed measure defined by $\bj^\top(A)=\bj(A^\top)$.

For any measures $\mu\in\cM^+(\Rd)$ and $\rho\in\mP(\Rd)$, we define the (restricted) product measures $\gamma_i\in\cM^+(G)$ for $i=1,2$ as
\begin{equation}\label{eq:def:gamma}
 \di\gamma_1(x,y) = \di\rho(x)\di\mu(y) \qquad\text{and}\qquad \di\gamma_2(x,y) = \di\mu(x) \di\rho(y) \qquad\text{for $(x,y)\in G$ }. 
\end{equation}
Note that $\gamma^\top_1 = \gamma_2$. 
We define the action for general $\eta$ which we only require to 
 satisfy Assumption~\eqref{it:as:pos-sym-lsc}, i.e., continuity on $G$, symmetry and positivity.
\begin{definition}[Action]\label{def:action}
For $\mu\in\cM^+(\Rd)$, $\rho\in\mP(\Rd)$ and $\bj\in\cM(G)$, consider $\lambda\in\cM(G)$ such that $\rho\otimes\mu,\mu\otimes\rho,|\bj|\ll|\lambda|$. We define
\begin{equation}\label{eq:def-action}
    \cA(\mu;\rho,\bj)=\frac12\iint_G\left(\alpha\left(\frac{\di\bj}{\di|\lambda|},\frac{\di(\rho\otimes\mu)}{\di|\lambda|}\right)+\alpha\left(-\frac{\di\bj}{\di|\lambda|},\frac{\di(\mu\otimes\rho)}{\di|\lambda|}\right)\right)\eta \di|\lambda| .
\end{equation}
Hereby, the lower semicontinuous, convex, and positively one-homogeneous function $\alpha\colon \R\times\R_+\to\R_+\cup\{\infty\}$ is defined, for all $j\in\R$ and $r\geq 0$, by
\begin{equation}\label{eq:def:alpha}
\alpha(j,r):=\begin{cases}
\frac{(j_+)^2}{r} \qquad &\text{if}\ r>0,\\
0 \qquad &\text{if}\ j\leq 0\ \text{and}\ r=0,\\
\infty \qquad &\text{if}\ j> 0\ \text{and}\ r=0,
\end{cases}
\end{equation}
with $j_+=\max\{0,j\}$. If the measure $\mu$ is clear from the context, we  write $\cA(\rho,\bj)$ for $\cA(\mu;\rho,\bj)$.
\end{definition}
Note that Definition \ref{def:action} is well-posed since the one-homogeneity of $\alpha$ makes it independent of the particular choice of $\lambda$ as long as the absolute continuity condition in Definition \ref{def:action} is satisfied. An example of such measure is a $\lambda$ such that $|\lambda|=|\rho\otimes \mu|+|\mu\otimes \rho|+|\bj|$. Moreover, $\lambda$ can be chosen symmetric, otherwise it can be replaced by $\frac{1}{2}(\lambda +\lambda
^\top)$.
\begin{rem} \label{rem:action-mu}
We note that the action is inversely proportional to the measure $\mu$: doubling the measure $\mu$ leads to halving the action. This has important consequence for the way $\mu$ influences the geometry of the space of measures. In particular, $\mu$ not only sets the region where mass can be transported, but also makes the transport less costly in the regions of high density of $\mu$.
\end{rem}
\begin{rem}\label{rem:action:abs_cont}
    If $\rho\ll \mu$, then we denote its density by $\rho$ by abuse of notation, and if furthermore $\bj\ll \mu\otimes \mu$ with density $j$, then it holds
\begin{equation}
 \cA(\mu;\rho,\bj) = \frac12\iint_G \bra*{ \frac{(j(x,y)_+)^2}{\rho(x)} + \frac{(j(x,y)_-)^2}{\rho(y)}} \eta(x,y) \di\mu(x) \di\mu(y).
\end{equation}
\end{rem}

In the following lemma we can see that the action takes the form from the tentative definition of the metric in \eqref{eq:ben-bre-formula}, as soon as it is bounded.
\begin{lemma}\label{lem:action}
Let $\mu\in \cM^+(\Rd)$, $\rho\in\mP(\Rd)$ and $\bj\in\cM(G)$ be such that $\cA(\mu;\rho,\bj)<\infty$. Then there exists a measurable $v\colon G\to\R$ such that
\begin{equation}\label{e:bj:v}
\di\bj(x,y)=v(x,y)_+\di\rho(x)\di\mu(y)-v(x,y)_-\di\mu(x)\di\rho(y),
\end{equation}
and it holds
\begin{equation}\label{eq:action:v}
\cA(\mu;\rho,\bj)=\frac12\iint_G\left(|v(x,y)_+|^2+|v(y,x)_-|^2\right)\eta(x,y)\di\rho(x)\di\mu(y).
\end{equation}
In particular, if $v\in \cV^{\mathrm{as}}(G)$, then
\begin{equation}\label{eq:action:vas}
    \cA(\mu;\rho,\bj)=\iint_G|v(x,y)_+|^2\eta(x,y)\di\rho(x)\di\mu(y).
\end{equation}
\end{lemma}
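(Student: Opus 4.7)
The plan is as follows. First, I will use the finiteness of $\cA(\mu;\rho,\bj)$ to deduce absolute continuity properties of the Jordan decomposition $\bj=\bj^+-\bj^-$. Choose a reference measure $\lambda\in\cM^+(G)$ dominating $|\bj|$, $\gamma_1=\rho\otimes\mu$, and $\gamma_2=\mu\otimes\rho$, for instance $\lambda=|\bj|+\gamma_1+\gamma_2$. Writing $j=\tfrac{\di\bj}{\di\lambda}$, $r_1=\tfrac{\di\gamma_1}{\di\lambda}$, $r_2=\tfrac{\di\gamma_2}{\di\lambda}$, the finiteness of the action together with the fact that $\alpha$ equals $+\infty$ whenever its first argument is strictly positive and its second argument vanishes forces $j\le 0$ $\lambda$-a.e.\ on $\{r_1=0\}$ and $j\ge 0$ $\lambda$-a.e.\ on $\{r_2=0\}$ (using that $\eta>0$ on $G$). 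Equivalently, $\bj^+\ll\gamma_1$ and $\bj^-\ll\gamma_2$ as measures on $G$.

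Next, set $u:=\tfrac{\di\bj^+}{\di\gamma_1}\ge 0$ and $w:=\tfrac{\di\bj^-}{\di\gamma_2}\ge 0$. By the Hahn decomposition, pick a measurable set $A\subseteq G$ carrying $\bj^+$ and satisfying $\bj^-(A)=0$. After extending $u$ by zero off $A$ and $w$ by zero on $A$, define the measurable function $v\colon G\to\R$ by $v(x,y):=u(x,y)-w(x,y)$. Since $u$ and $w$ are supported on the disjoint sets $A$ and $G\setminus A$, one has $v_+=u$ and $v_-=w$ pointwise; hence
\begin{equation*}
    v_+\di\gamma_1-v_-\di\gamma_2 \;=\; u\,\di\gamma_1 - w\,\di\gamma_2 \;=\; \di\bj^+ - \di\bj^- \;=\; \di\bj,
\end{equation*}
which is \eqref{e:bj:v}.

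To obtain the action identity, I invoke the positive one-homogeneity of $\alpha$, which makes the two integrands in \eqref{eq:def-action} evaluable directly against $\gamma_1$ and $\gamma_2$ respectively. Wherever $u>0$ one necessarily has $r_1>0$, so a routine computation with the Radon--Nikodym derivatives shows that the first integrand equals $u^2$ when integrated against $\gamma_1$; analogously the second contributes $w^2$ integrated against $\gamma_2$. Thus
\begin{equation*}
    \cA(\mu;\rho,\bj) = \tfrac12\iint_G u(x,y)^2\,\eta(x,y)\di\rho(x)\di\mu(y) + \tfrac12\iint_G w(x,y)^2\,\eta(x,y)\di\mu(x)\di\rho(y).
\end{equation*}
Swapping $(x,y)\leftrightarrow(y,x)$ in the second integral and using symmetry of $\eta$ rewrites it with integrand $w(y,x)^2$ against $\di\rho(x)\di\mu(y)$. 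Substituting $u=v_+$ and $w=v_-$ then yields \eqref{eq:action:v}. Finally, if $v\in\cV^{\mathrm{as}}(G)$, then $v(y,x)_-=(-v(x,y))_-=v(x,y)_+$, so the two summands in the integrand coincide and \eqref{eq:action:vas} follows.

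The main subtlety is the decomposition step: the finiteness of $\cA$ alone yields absolute continuity of $\bj^+$ and $\bj^-$ with respect to \emph{different} reference measures, and one must exploit the mutual singularity of $\bj^+$ and $\bj^-$ via a Hahn partition to encode both densities through a single scalar velocity field $v$; once this is done, the remaining computations are essentially a bookkeeping exercise based on the one-homogeneity of $\alpha$ and the symmetry of $\eta$.
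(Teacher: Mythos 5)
Your proof is correct and follows essentially the same route as the paper's: finiteness of the action forces the absolute continuity $\bj^+\ll\gamma_1$, $\bj^-\ll\gamma_2$, a single velocity $v$ is assembled from the two Radon--Nikodym densities (you via a Hahn set for the Jordan decomposition, the paper via the positive and negative parts of the density of $\bj$ with respect to a common symmetric dominating measure $\lambda$ — an equivalent bookkeeping), and the action identity then follows from the positive one-homogeneity of $\alpha$ together with the symmetry of $\eta$ (resp.\ of $\lambda$) to flip the $\gamma_2$-integral into a $\gamma_1$-integral. The antisymmetric case \eqref{eq:action:vas} is handled identically in both arguments.
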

\begin{proof}
  Let $\lambda \in \cM^+(G)$ be such that $\di\gamma_1(x,y) = \di\rho(x) \di\mu(y) = \tilde\gamma_1(x,y) \di\lambda(x,y)$, likewise $\di\gamma_2(x,y) = \di\mu(x) \di\rho(y) = \tilde\gamma_2(x,y) \di\lambda(x,y)$, and $\di\bj = \tilde j \di\lambda$ for some measurable $\tilde\gamma_1,\tilde\gamma_2,\tilde j\colon G\to\R$. Without loss of generality we can assume $\lambda$ to be symmetric; for instance by considering $\tfrac{1}{2} (\lambda + \lambda^\top)$ instead. Thus, \eqref{eq:def-action} implies
  \[
    \cA(\mu;\rho,\bj) = \frac12\iint_G  \bra*{ \alpha\bra[\big]{ \tilde{j}(x,y), \tilde\gamma_1(x,y)} + \alpha\bra[\big]{ -\tilde{j}(x,y), \tilde\gamma_2(x,y)}}\eta(x,y)\di\lambda(x,y) < \infty.
  \]
  By the definition of the function $\alpha$ in~\eqref{eq:def:alpha}, it immediately follows that the vector field $\tilde v^+(x,y) = \frac{\tilde j(x,y)_+}{\tilde\gamma_1(x,y)}$ is well-defined $\gamma_1$-a.e.\ on $G$. By the same argument, we find that $\tilde v^-(x,y) = \frac{\tilde j(x,y )_-}{\tilde\gamma_2(x,y)}$ is well-defined $\gamma_2$-a.e.\ on $G$. Since $\gamma_1=\gamma_2^\top$ we have that $\bra*{\tilde v^-}^\top$ exists $\gamma_1$-a.e.\ on $G$. 
  Hence, we obtain the measurable vector field
  \[
    v(x,y) = \tilde v^+(x,y) - \tilde v^-(x,y). 
  \]
  The statement \eqref{eq:action:v} follows by using the positively one-homogeneity of $\alpha$, the identity $\alpha(j,r)=\alpha(j_+,r)$ and the symmetry of $\lambda$:
  \begin{align*}
     \cA(\mu;\rho,\bj) &= \frac12\iint_G \alpha\bra[\big]{v(x,y)_+ \tilde\gamma_1(x,y),  \tilde\gamma_1(x,y)} \, \eta(x,y) \di\lambda(x,y)\\
     &\qquad +  \frac12\iint_G \alpha\bra[\big]{v(x,y)_- \tilde\gamma_2(x,y) , \tilde\gamma_2(x,y)} \, \eta(x,y)  \di\lambda(x,y) \\
     &=  \frac12\iint_G \abs{ v(x,y)_+}^2 \tilde\gamma_1(x,y) \,  \eta(x,y) \di\lambda(x,y) +  \frac12\iint_G \abs{ v(y,x)_-}^2 \tilde\gamma_1(x,y) \, \eta(x,y) \di\lambda(x,y). \qedhere
  \end{align*}
\end{proof}
\begin{definition}[Nonlocal gradient and divergence] \label{def:nl_grad_div}
For any function $\phi \colon \Rd \to \R$ we define its \emph{nonlocal gradient} $\dgrad \phi \colon G \to \R$ by
\begin{equation} \label{eq:nl_grad}
 \dgrad\phi(x,y)=\phi(y)-\phi(x) \quad \mbox{for all $(x,y)\in G$}.
\end{equation}
For any $\bj\in \cM(G)$, its \emph{nonlocal divergence} $\dgrad\cdot\bj \in \cM(\R^d)$ is defined as $\eta$-weighted adjoint of $\dgrad$, i.e.,
\begin{equation} \label{eq:nl_div}
       \int \phi \dx\dgrad \cdot \bj = - \frac12\iint_G\dgrad\phi(x,y)
\eta(x,y)\di\bj(x,y) = \frac{1}{2}\int \phi(x) \int \eta(x,y) \bra*{ \di\bj(x,y) - \di\bj(y,x)} .
  \end{equation}
In particular, for $\bj\in \cM^{\mathrm{as}}(G) := \set{\bj\in \cM(G): \bj^\top = - \bj}$,
  \begin{equation} \label{eq:nl_divas}
     \int \phi \dx\dgrad\cdot\bj= \iint_G \phi(x) \eta(x,y) \di\bj(x,y) .
  \end{equation}
\end{definition}
If $\bj$ is given by~\eqref{e:bj:v} for some $v\in \cV^{\mathrm{as}}(G)$, then the flux satisfies an antisymmetric relation on the support of $\gamma_1$-a.e.\ on $G$, i.e., $\bj^+=(\bj^\top)^-$ $\gamma_1$-a.e.\ on $G$. The following corollary shows that those antisymmetric fluxes are the relevant ones for the minimization of the action functional. For this reason, the natural class of fluxes are those measure on $G$ which are antisymmetric with positive part absolutely continuous with respect to $\gamma_1$, that is,
\begin{equation}\label{eq:def:Mas}
\cM_{\gamma_1}^{\mathrm{as}}(G)=\{\bj\in\cM(G): \bj^+\ll \gamma_1,\,  \bj^-\ll \gamma_1^\top \text{ and } \bj^+=(\bj^\top)^-\ \gamma_1\text{-a.e.}\}
\end{equation}
\begin{corollary}[Antisymmetric vector fields have lower action]\label{cor:antsym_vect_field_lower_action}
Let $\mu\in\cM^+(\Rd)$, $\rho\in\mP(\Rd)$ and $\bj\in\cM(G)$ be such that $\cA(\mu;\rho,\bj)<\infty$. Then there exists an antisymmetric flux $\bj^{\mathrm{as}}\in\cM_{\gamma_1}^{\mathrm{as}}$ such that
\[
\dgrad\cdot\bj=\dgrad\cdot\bj^{\mathrm{as}},
\]
with lower action:
\[
\cA(\mu;\rho,\bj^{\mathrm{as}})\le\cA(\mu;\rho,\bj).
\]
\end{corollary}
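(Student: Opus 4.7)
The plan is to take the antisymmetrized flux
\[
\bj^{\mathrm{as}} := \tfrac{1}{2}\bigl(\bj - \bj^\top\bigr),
\]
and to verify in turn (i) membership in $\cM_{\gamma_1}^{\mathrm{as}}(G)$, (ii) equality of the nonlocal divergences, and (iii) the action inequality.

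For (i), antisymmetry of $\bj^{\mathrm{as}}$ as a signed measure is immediate. The finiteness of $\cA(\mu;\rho,\bj)$ lets me invoke Lemma~\ref{lem:action} to obtain a measurable $v\colon G\to\R$ with $\di\bj = v_+\di\gamma_1 - v_-\di\gamma_2$; a direct transposition (choosing a symmetric reference measure $\lambda$, which is always possible by replacing $\lambda$ with $\tfrac{1}{2}(\lambda+\lambda^\top)$) gives $\di\bj^\top(x,y) = v(y,x)_+\di\gamma_2 - v(y,x)_-\di\gamma_1$, so
\[
\di\bj^{\mathrm{as}} = \tfrac{1}{2}\bigl(v(x,y)_+ + v(y,x)_-\bigr)\di\gamma_1 - \tfrac{1}{2}\bigl(v(x,y)_- + v(y,x)_+\bigr)\di\gamma_2,
\]
yielding $(\bj^{\mathrm{as}})^+\ll\gamma_1$ and $(\bj^{\mathrm{as}})^-\ll\gamma_2 = \gamma_1^\top$. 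Since $\bj^{\mathrm{as}}$ is antisymmetric as a measure, $(\bj^{\mathrm{as}})^+ = ((\bj^{\mathrm{as}})^\top)^-$ holds exactly, so $\bj^{\mathrm{as}}\in\cM_{\gamma_1}^{\mathrm{as}}(G)$. For (ii), I test $\dgrad\cdot\bj^{\mathrm{as}}$ and $\dgrad\cdot\bj$ against an arbitrary $\phi$ via \eqref{eq:nl_div}: because $(x,y)\mapsto\dgrad\phi(x,y)\eta(x,y)$ is antisymmetric under $(x,y)\leftrightarrow(y,x)$, the change of variables yields $\iint_G \dgrad\phi\,\eta\,\di\bj^\top = -\iint_G \dgrad\phi\,\eta\,\di\bj$, hence
\[
\int\phi\,\di\dgrad\cdot\bj^{\mathrm{as}} = -\tfrac{1}{2}\iint_G\dgrad\phi\,\eta\,\di\bj^{\mathrm{as}} = -\tfrac{1}{2}\iint_G\dgrad\phi\,\eta\,\di\bj = \int\phi\,\di\dgrad\cdot\bj.
\]

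For (iii), I combine convexity of $\cA(\mu;\rho,\cdot)$ in the flux variable (inherited from convexity of $\alpha$ in its first argument, cf.\ Lemma~\ref{lem:convexity}) with the symmetry
\[
\cA(\mu;\rho,-\bj^\top) = \cA(\mu;\rho,\bj),
\]
which I verify from \eqref{eq:def-action}: again choosing $\lambda$ symmetric and applying $(x,y)\leftrightarrow(y,x)$ in the integral swaps the two summands $\alpha(\tilde\jmath,\tilde\gamma_1)$ and $\alpha(-\tilde\jmath,\tilde\gamma_2)$, while the sign change produced by $\bj\mapsto-\bj^\top$ cancels the flip in $\alpha$'s first argument. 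Convexity then yields
\[
\cA(\mu;\rho,\bj^{\mathrm{as}}) \leq \tfrac{1}{2}\cA(\mu;\rho,\bj) + \tfrac{1}{2}\cA(\mu;\rho,-\bj^\top) = \cA(\mu;\rho,\bj).
\]
The main technical obstacle is the bookkeeping around how $\bj\mapsto-\bj^\top$ interacts with the Radon--Nikodym decomposition in \eqref{eq:def-action}: both the invariance identity and the correct absolute-continuity structure of $\bj^{\mathrm{as}}$ hinge on exploiting the positive one-homogeneity of $\alpha$ together with the freedom to work with a symmetric reference measure $\lambda$. Once those are in place, everything else is a direct computation.
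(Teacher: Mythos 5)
Your proof is correct. Parts (i) and (ii) coincide with the paper's argument: the same antisymmetrization $\bj^{\mathrm{as}}=\tfrac12(\bj-\bj^\top)$ and the same computation showing $\iint_G\dgrad\phi\,\eta\di\bj^{\mathrm{as}}=\iint_G\dgrad\phi\,\eta\di\bj$ via antisymmetry of $\dgrad\phi$ and symmetry of $\eta$. For the action inequality (iii) you take a genuinely different route. The paper passes through Lemma~\ref{lem:action} to write the action in velocity form, compares \eqref{eq:action:v} with \eqref{eq:action:vas}, and reduces the claim to the pointwise inequality $\sum\abs{v^{\mathrm{as}}_\pm}^2\le\sum\abs{v_\pm}^2$, which follows from Jensen's inequality applied to $r\mapsto(r_\pm)^2$. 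You instead work at the level of measures: the invariance $\cA(\mu;\rho,-\bj^\top)=\cA(\mu;\rho,\bj)$ (correctly verified by choosing a symmetric reference measure $\lambda$ and swapping $(x,y)\leftrightarrow(y,x)$, which exchanges the two $\alpha$-terms since $\gamma_2=\gamma_1^\top$), combined with convexity of $\bj\mapsto\cA(\mu;\rho,\bj)$ from Lemma~\ref{lem:convexity}, applied to the midpoint $\tfrac12\bj+\tfrac12(-\bj^\top)=\bj^{\mathrm{as}}$. Both arguments ultimately rest on the convexity of $\alpha$, but yours avoids the velocity representation entirely for the inequality step and is slightly cleaner bookkeeping; the paper's pointwise version makes the loss of action more explicit edge by edge. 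Your verification of membership in $\cM^{\mathrm{as}}_{\gamma_1}(G)$ via the density representation from Lemma~\ref{lem:action} is also sound (the paper leaves this essentially implicit).
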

\begin{proof}
  Let us set $\bj^{\mathrm{as}} = \bra{\bj - \bj^\top}/2$. Since $\eta$ is symmetric and $\bra[\big]{\dgrad\phi}^\top = - \dgrad\phi$, we get
  \begin{align*}
    \iint_G \dgrad \phi \; \eta \di{\bj^{\mathrm{as}}} &= \frac12\iint_G \dgrad\phi \; \eta\; \bra{ \di\bj - \di\bj^\top} 
    =  \frac12\iint_G \dgrad\phi \; \eta \di\bj -   \frac12\iint_G \bra[\big]{\dgrad\phi}^\top \; \eta \di\bj =  \iint_G \dgrad\phi \; \eta \di\bj. 
  \end{align*}
By an application of Lemma \ref{lem:action} and comparison of \eqref{eq:action:v} and \eqref{eq:action:vas} it is enough to show that, for all $(x,y)\in G$,
  \begin{align*}
    \MoveEqLeft{\abs*{ v^{\mathrm{as}}(x,y)_+}^2 + \abs*{v^{\mathrm{as}}(x,y)_-}^2 + \abs*{ v^{\mathrm{as}}(y,x)_+}^2 + \abs*{v^{\mathrm{as}}(y,x)_-}^2} \\
    &\leq \abs*{ v(x,y)_+}^2 + \abs*{v(x,y)_-}^2 + \abs*{ v(y,x)_+}^2 + \abs*{v(y,x)_-}^2
  \end{align*}
  for any measurable $v\colon G\to \R$, where $v^{\mathrm{as}}(x,y) = \bra*{ v(x,y) -v(y,x)}/2$. This estimate is a consequence of Jensen's inequality applied to the convex functions
  \[ 
    \varphi^\pm \colon \R \to \R \qquad\text{with}\qquad \varphi^\pm(r) = \bra*{ r_\pm}^2. \qedhere
  \]
\end{proof}

\begin{lemma}[Lower semicontinuity of the action]\label{lem:l.s.c.action}
The action is lower semicontinuous with respect to the narrow convergence in $\cM^+(\Rd)\times \cP(\Rd)\times \cM(G)$. That is, if $\mu^n {\rightharpoonup}\mu$ in $\cM(\Rd)$, $\rho^n {\rightharpoonup}\rho$ in $\mP(\Rd)$, and $\bj^n {\rightharpoonup}\bj$ in $\cM(G)$, then
\[
\liminf_{n\to\infty}\cA(\mu^n;\rho^n,\bj^n)\ge\cA(\mu;\rho,\bj) \;.
\]
\end{lemma}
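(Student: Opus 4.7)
The strategy is to realize $\cA$ as a pointwise supremum of narrowly continuous functionals of $(\mu,\rho,\bj)$, after which narrow lower semicontinuity follows automatically. The starting point is a Legendre--Fenchel representation of the convex, positively one-homogeneous integrand $\alpha$.

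\textbf{First}, I would compute directly that
\begin{equation*}
\alpha(j, r) = \sup_{a \geq 0} \left\{ a\, j - \tfrac{a^2}{4}\, r \right\} \quad \text{for all } j \in \R, \; r \geq 0,
\end{equation*}
with optimizer $a = 2 j_+/r$ when $r > 0$, and verification by a case analysis ($j\leq 0$ vs.\ $j>0$) when $r = 0$. Applying this pointwise to the two copies of $\alpha$ appearing in Definition~\ref{def:action} and using positive one-homogeneity yields the affine lower bound
\begin{equation*}
\cA(\mu'; \rho', \bj') \;\geq\; \Phi_{a_1, a_2}(\mu', \rho', \bj'),
\end{equation*}
where, for $a_1, a_2 \in C_c(G;[0,\infty))$,
\begin{equation*}
\Phi_{a_1, a_2}(\mu', \rho', \bj') := \frac{1}{2}\iint_G (a_1 - a_2)\, \eta \di\bj' - \frac{1}{8}\iint_G a_1^2\, \eta \di(\rho' \otimes \mu') - \frac{1}{8}\iint_G a_2^2\, \eta \di(\mu' \otimes \rho').
\end{equation*}
A standard representation theorem for convex integral functionals of measures with convex, lsc, positively one-homogeneous integrand (of Rockafellar / Bouchitt\'e--Buttazzo type), combined with the continuity of $\eta$ on the open set $G$ from \eqref{it:as:pos-sym-lsc} and the inner regularity of Radon measures, upgrades this inequality into an equality upon taking the supremum:
\begin{equation*}
\cA(\mu; \rho, \bj) = \sup_{a_1, a_2 \in C_c(G;\,[0,\infty))} \Phi_{a_1, a_2}(\mu, \rho, \bj).
\end{equation*}

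\textbf{Second}, I would show that each $\Phi_{a_1, a_2}$ is narrowly continuous. For $a_1, a_2 \in C_c(G)$ the products $(a_1 - a_2)\eta$ and $a_i^2 \eta$ are bounded, continuous, and compactly supported in $G$ (using continuity of $\eta$ on $G$), and extend by zero to bounded continuous functions on $\R^d \times \R^d$. Narrow convergence $\bj^n \rightharpoonup \bj$ in $\cM(G)$ handles the linear term. The two quadratic terms are handled by the standard product narrow convergence $\rho^n \otimes \mu^n \rightharpoonup \rho \otimes \mu$, which is valid because $\rho^n \in \cP(\R^d)$ is tight and $\mu^n \rightharpoonup \mu$ in $\cM^+(\R^d)$ forces uniform boundedness of masses (so that the family $\{\phi(x,\cdot)\}_{x\in K}$ is compact in $C_0$ for any compactly supported continuous $\phi$, yielding uniform-on-compacts convergence of the partial integrals). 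Hence $\Phi_{a_1, a_2}(\mu^n, \rho^n, \bj^n) \to \Phi_{a_1, a_2}(\mu, \rho, \bj)$. Combining with the affine lower bound applied to the sequence gives, for every admissible pair $(a_1, a_2)$,
\begin{equation*}
\liminf_{n\to\infty} \cA(\mu^n; \rho^n, \bj^n) \geq \Phi_{a_1, a_2}(\mu, \rho, \bj),
\end{equation*}
and taking the supremum over $(a_1, a_2)$ together with the identification of the first step yields $\liminf_n \cA(\mu^n; \rho^n, \bj^n) \geq \cA(\mu; \rho, \bj)$. Note that the argument is insensitive to whether $\cA(\mu;\rho,\bj)$ is finite or $+\infty$: in the latter case one can choose $(a_1,a_2)$ making $\Phi_{a_1,a_2}(\mu,\rho,\bj)$ arbitrarily large.

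\textbf{Main obstacle.} The delicate point is the identification of $\cA$ as a supremum over \emph{continuous} compactly supported test functions $(a_1,a_2)$. The pointwise duality for $\alpha$ gives the representation over measurable tests essentially by construction, and the upgrade to $C_c(G)$ tests demands a density/approximation argument exploiting inner regularity of $\bj$, $\rho\otimes\mu$ and $\mu\otimes\rho$ together with the continuity of $\eta$ on the open set $G$. This is standard in convex integration theory but is the technical heart of the proof; everything else is routine passage to the limit.
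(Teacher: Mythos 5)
Your proposal is correct in outline, but it takes a different route from the paper: the paper's proof is a direct application of a lower-semicontinuity theorem for integral functionals of (vector) measures — it bundles $(\bj,\rho\otimes\mu,\mu\otimes\rho)$ into one vector measure, observes that $f\bigl((x,y),(j,\gamma_1,\gamma_2)\bigr)=\bigl(\alpha(j,\gamma_1)+\alpha(-j,\gamma_2)\bigr)\eta(x,y)$ is jointly convex, positively one-homogeneous and lsc, notes $\rho^n\otimes\mu^n\rightharpoonup\rho\otimes\mu$, and cites \cite[Theorem 3.4.3]{But89}. You instead unfold the dual side: your Legendre representation $\alpha(j,r)=\sup_{a\geq 0}\{aj-\tfrac{a^2}{4}r\}$ is correct, the affine lower bound $\Phi_{a_1,a_2}$ is computed correctly, and the limit passage in step two is sound, so lower semicontinuity follows once $\cA$ is identified with $\sup_{a_1,a_2\in C_c(G)}\Phi_{a_1,a_2}$. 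What this buys is a more explicit, self-contained argument of the type used elsewhere in this literature (e.g.\ for nonlocal Benamou--Brenier actions); what it costs is that the identification step you flag as the ``technical heart'' is precisely the content of the theorem the paper cites (its proof is this duality plus a Lusin/inner-regularity approximation), so you are re-proving the black box rather than avoiding it. Two small points to watch: (i) your extension-by-zero of $a_i\eta$ and $(a_1-a_2)\eta$ to $C_\mathrm{b}(\Rd\times\Rd)$ uses that $G$ is open, i.e.\ that \eqref{it:as:pos-sym-lsc} is read as giving lower semicontinuity of $\eta$ on $\{x\neq y\}$ — this is exactly how the paper's proof also reads it, so it is consistent, but it should be said; (ii) the joint narrow convergence $\rho^n\otimes\mu^n\rightharpoonup\rho\otimes\mu$ (using tightness of $(\rho^n)_n$ and the uniform mass bound on $(\mu^n)_n$) deserves the one-line justification you sketch, since it is the only place where convergence of the pair $(\mu^n,\rho^n)$ rather than of each factor separately is used.
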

\begin{proof}
First, note that the narrow convergence of any sequences $(\rho^n)_n$ and $(\mu^n)_n$ implies the narrow convergence of the product: $\rho^n\otimes\mu^n \rightharpoonup \rho\otimes \mu$ in $\cP(\Rd)\times \cM^+(\Rd)$, therefore also in $\cM^+(G)$. Then, in Definition \ref{def:action} consider the vector-valued measure
\[
  \lambda = \bra*{ \bj , \rho \otimes \mu,\mu \otimes \rho}.
 \]
Further, we define the function
\[
    f\colon G \times \R^3 \to \R \quad\text{by} \quad f\bra[\big]{(x,y),(j,\gamma_1,\gamma_2)} = \bra[\big]{ \alpha(j,\gamma_1) + \alpha(-j,\gamma_2)} \, \eta(x,y).
\]
Since the function $\eta$ is lower semicontinuous by~\eqref{it:as:pos-sym-lsc} and $\alpha$ defined in \eqref{eq:def:alpha} is lower semicontinuous, jointly convex and positively one-homogeneous, $f$ satisfies the assumptions of \cite[Theorem 3.4.3]{But89}, whence the claim follows.
\end{proof}
According to Definition \ref{def:action}, fluxes and action are strictly related. In case $\cA(\mu;\rho,\bj)<+ \infty$, we get a useful upper bound in the following lemma that will be crucial in several technical parts later on.
\begin{lemma}\label{lem:A:TV:bound}
For any $\mu\in\cM^+(\Rd)$, $\rho\in\mP(\Rd)$, $\bj\in\cM(G)$ and any measurable $\Phi:G\to\R_+$ it holds
\begin{equation}\label{eq:control-phi}
\left(\frac12\iint_G\Phi\, \eta\di|\bj|\right)^2\le \, \cA(\mu;\rho,\bj) \iint_G\Phi^2\, \eta\,(\di\gamma_1+\di\gamma_2) .
\end{equation}
\end{lemma}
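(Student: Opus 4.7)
The plan is to recognize the inequality as a two-step Cauchy--Schwarz estimate in disguise, exploiting the Jordan decomposition $\bj = \bj^+ - \bj^-$ so that $|\bj| = \bj^+ + \bj^-$, and splitting the action correspondingly into its ``forward'' contribution (measured against $\gamma_1$) and its ``backward'' contribution (measured against $\gamma_2$). If $\cA(\mu;\rho,\bj) = \infty$ the estimate is trivial, so I would assume finiteness and pick a reference measure $\lambda \in \cM^+(G)$ dominating all of $\gamma_1$, $\gamma_2$ and $|\bj|$ (for instance $\lambda = \gamma_1 + \gamma_2 + |\bj|$, possibly symmetrized as in the proof of Lemma~\ref{lem:action}). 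Writing $j := \di\bj/\di\lambda$, $g_1 := \di\gamma_1/\di\lambda$, $g_2 := \di\gamma_2/\di\lambda$, Definition~\ref{def:action} gives
\[
\cA(\mu;\rho,\bj) = \frac12\iint_G \left(\frac{(j_+)^2}{g_1} + \frac{(j_-)^2}{g_2}\right)\eta\,\di\lambda,
\]
with the convention from \eqref{eq:def:alpha} that the quotients vanish wherever numerator and denominator both do; the finiteness of $\cA$ ensures that $j_+$ is supported in $\{g_1>0\}$ and $j_-$ in $\{g_2>0\}$.

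The first step is to split the left-hand side using $\di|\bj| = (j_+ + j_-)\,\di\lambda$ and set
\[
I_+ := \frac12\iint_G \Phi\,\eta\, j_+\,\di\lambda, \qquad I_- := \frac12\iint_G \Phi\,\eta\, j_-\,\di\lambda,
\]
so the left-hand side of \eqref{eq:control-phi} is $(I_+ + I_-)^2$. Then I would apply the integral Cauchy--Schwarz inequality in the positive measure $\tfrac12\eta\,\di\lambda$, factoring $j_+\Phi = (j_+/\sqrt{g_1})(\Phi\sqrt{g_1})$ on $\{g_1>0\}$ (the set $\{g_1=0\}$ contributes nothing by the above remark), and analogously for $I_-$, to obtain
\[
I_+^2 \le \left(\frac12\iint_G \frac{(j_+)^2}{g_1}\,\eta\,\di\lambda\right)\!\left(\frac12\iint_G \Phi^2 g_1\,\eta\,\di\lambda\right), \qquad I_-^2 \le \left(\frac12\iint_G \frac{(j_-)^2}{g_2}\,\eta\,\di\lambda\right)\!\left(\frac12\iint_G \Phi^2 g_2\,\eta\,\di\lambda\right).
\]

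The second step is a discrete Cauchy--Schwarz: writing $I_\pm \le \sqrt{A_\pm B_\pm}$ with the obvious identification of $A_\pm$ (the two pieces of $\cA$) and $B_\pm = \tfrac12\iint \Phi^2 g_{1,2}\,\eta\,\di\lambda$, the elementary inequality $(\sqrt{a_1b_1}+\sqrt{a_2 b_2})^2 \le (a_1+a_2)(b_1+b_2)$ yields
\[
(I_+ + I_-)^2 \le (A_+ + A_-)(B_+ + B_-) = \cA(\mu;\rho,\bj)\cdot \frac12\iint_G \Phi^2\,\eta\,(\di\gamma_1 + \di\gamma_2),
\]
which is even slightly stronger than claimed (with a factor $1/2$ to spare), giving \eqref{eq:control-phi} immediately.

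I don't anticipate any serious obstacle: the proof is structurally just Cauchy--Schwarz applied twice, and the one-homogeneity of $\alpha$ plus the conventions in \eqref{eq:def:alpha} make the derivation independent of the choice of $\lambda$. The only mild care needed is in handling the null sets $\{g_1 = 0\}$ and $\{g_2 = 0\}$, which is already dealt with by the finite-action hypothesis forcing $j_\pm$ to vanish there; this is entirely analogous to the construction of the velocity $v$ in the proof of Lemma~\ref{lem:action}.
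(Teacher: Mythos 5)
Your proof is correct and is essentially the paper's argument: both reduce to densities with respect to a common dominating measure $\lambda$ and estimate $\frac12\iint_G\Phi\,\eta\,(j_++j_-)\di\lambda$ by Cauchy--Schwarz against the two halves of the action. The only difference is that you apply Cauchy--Schwarz separately to the $j_+/\tilde\gamma_1$ and $j_-/\tilde\gamma_2$ contributions and recombine via $(\sqrt{a_1b_1}+\sqrt{a_2b_2})^2\le(a_1+a_2)(b_1+b_2)$, whereas the paper uses the pointwise bound $j_++j_-\le\sqrt{2\max\{\tilde\gamma_1,\tilde\gamma_2\}}\sqrt{\alpha(j,\tilde\gamma_1)+\alpha(-j,\tilde\gamma_2)}$ followed by $\max\{\tilde\gamma_1,\tilde\gamma_2\}\le\tilde\gamma_1+\tilde\gamma_2$; your variant even yields the slightly sharper constant with an extra factor $\tfrac12$ on the right-hand side.
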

\begin{proof}
Let $\mu\in\cM^+(\Rd)$, $\rho\in \mP(\Rd)$ and $\bj\in \cM(G)$ be such that $\cA(\mu;\rho,\bj)<+ \infty$. Let $|\lambda| \in \cM^+(G)$ be such that $\gamma_1, \gamma_2, |\bj|\ll |\lambda|$ as in Definition \ref{def:action} and write $\gamma_i = \tilde\gamma_i |\lambda|$ and $|\bj| = |j| |\lambda|$ for the densities.

We have that $A:=\set[\big]{(x,y) \in G\colon \alpha(j,\tilde\gamma_1) = \infty \text{ or } \alpha(-j,\tilde\gamma_2)=\infty}$ is a $\lambda$-nullset. We observe the elementary inequality
\[
(j_+)^2 + (j_-)^2 \leq {\max\set{\tilde\gamma_1,\tilde\gamma_2}} \bra[\big]{\alpha(j,\tilde\gamma_1) + \alpha(-j,\tilde\gamma_2)}, \qquad\text{$\lambda$-a.e.\ in $A^\mathrm{c}$}. 
\]
In particular, it holds
  \[
   \abs{j} = j_+ + j_- \leq \sqrt{2 \max\set{\tilde\gamma_1,\tilde\gamma_2}} \sqrt{\alpha(j,\tilde\gamma_1) + \alpha(-j,\tilde\gamma_2)},  \qquad\text{$\lambda$-a.e.\ in $A^\mathrm{c}$}. 
  \]
Hence we can estimate
  \begin{align*}
     \frac12\iint_G \Phi\,\eta \di\abs{\bj} &= \frac12\iint_G \Phi\,\eta\,  \abs{j} \di|\lambda| = \frac12\iint_{A^\mathrm{c}} \Phi\,\eta\, \bra*{ j_+ + j_-} \di{|\lambda|} \\
    &\leq \frac12\iint_{A^\mathrm{c}} \Phi\,\eta\, \sqrt{2\max\set*{ \tilde\gamma_1,\tilde\gamma_2}} \sqrt{\alpha(j,\tilde\gamma_1) + \alpha(-j,\tilde\gamma_2)} \di|\lambda| \\
    &\leq \bra*{ \iint_G \Phi^2\,\eta\, \max\set*{ \tilde\gamma_1,\tilde\gamma_2}  \di|\lambda|}^{\frac12} \bra*{\frac12\iint_G \bra*{ \alpha(j,\tilde\gamma_1) + \alpha(-j,\tilde\gamma_2)}\,\eta  \di|\lambda|}^{\frac12}.
  \end{align*}
Now, the result follows by estimating $\max\set*{\tilde\gamma_1,\tilde\gamma_2} \leq \tilde\gamma_1 + \tilde\gamma_2$.
\end{proof}
As a consequence of the previous results we have the following corollary, which will be useful in Section \ref{subsec:nonloc-cont-eq}. 
\begin{corollary}\label{cor:boundwithA}
Let $\mu\in \cM^+(\R^d)$ satisfy~\ref{it:as-conv} for some $C_\eta\in(0,\infty)$, then for all $\rho\in\cP(\Rd)$ and $\bj\in\cM(G)$ there holds
\begin{equation}\label{eq:boundwithA}
  \frac12\iint_G(2\wedge|x-y|)\eta(x,y)\di\abs{\bj}(x,y)\le \sqrt{2C_\eta\, \cA(\mu;\rho,\bj)}.
\end{equation}
\end{corollary}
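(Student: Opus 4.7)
The plan is to deduce the estimate as a direct consequence of Lemma~\ref{lem:A:TV:bound}, with the test function $\Phi\colon G\to \R_+$ defined by $\Phi(x,y) = 2\wedge |x-y|$. The only nontrivial input beyond the lemma is Assumption~\ref{it:as-conv}, which we will use to control the weighted second moment appearing on the right-hand side.

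First, apply Lemma~\ref{lem:A:TV:bound} with this choice of $\Phi$ to obtain
\begin{equation*}
  \left(\frac12\iint_G (2\wedge |x-y|)\,\eta(x,y)\di|\bj|(x,y)\right)^{\!2} \le \cA(\mu;\rho,\bj)\iint_G \bra*{2\wedge|x-y|}^2 \eta(x,y)\,(\di\gamma_1+\di\gamma_2).
\end{equation*}
Next, I would use the elementary pointwise inequality $(2\wedge |x-y|)^2 \le |x-y|^2$, which holds trivially for $|x-y|\le 2$ and because $4 \le |x-y|^2$ for $|x-y|>2$. This reduces the task to bounding $\iint_G |x-y|^2\,\eta(x,y)(\di\gamma_1+\di\gamma_2)$.

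For the term involving $\gamma_1=\rho\otimes \mu$, Fubini and Assumption~\ref{it:as-conv} give
\begin{equation*}
  \iint_G |x-y|^2\,\eta(x,y)\di\rho(x)\di\mu(y) \le \int_{\Rd}\!\bra*{\sup_{x\in\Rd}\int_{\Rd} |x-y|^2\,\eta(x,y)\di\mu(y)}\di\rho(x) \le C_\eta,
\end{equation*}
since $\rho\in\cP(\Rd)$. The symmetry of $\eta$ (from~\eqref{it:as:pos-sym-lsc}) and a relabeling of variables yield the same bound for the integral against $\gamma_2=\mu\otimes\rho$. Adding the two contributions gives $\iint_G (2\wedge|x-y|)^2\,\eta\,(\di\gamma_1+\di\gamma_2)\le 2C_\eta$, and substituting this back into the inequality from Lemma~\ref{lem:A:TV:bound} and taking square roots yields~\eqref{eq:boundwithA}.

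The proof is essentially a one-line application of Lemma~\ref{lem:A:TV:bound}, so I do not anticipate a real obstacle. The only subtle point is the choice of $\Phi$: one wants $\Phi$ bounded (so that the left-hand side behaves like a weighted Kantorovich--Rubinstein-type norm of $\bj$, later to be compared with the $W_1$ topology), while still having $\Phi^2$ dominated by $|x-y|^2$ so that~\ref{it:as-conv} can be invoked. The truncation at level $2$ achieves both simultaneously.
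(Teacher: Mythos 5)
Your proof is correct and follows essentially the same route as the paper: apply Lemma~\ref{lem:A:TV:bound} with $\Phi(x,y)=2\wedge|x-y|$, bound $\Phi^2$ by $|x-y|^2\le |x-y|^2\vee|x-y|^4$ so that Assumption~\ref{it:as-conv} applies, and use the symmetry of $\eta$ to treat the $\gamma_2$ term, yielding the factor $2C_\eta$. The only cosmetic difference is that the paper bounds $(2\wedge|x-y|)^2$ directly by $|x-y|^2\vee|x-y|^4$, whereas you pass through $|x-y|^2$ first; both are immediate.
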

\begin{proof}
Let us consider the case $\cA(\mu;\rho,\bj)<\infty$, otherwise the result is trivial. From Lemma \ref{lem:action} we have $\di\bj(x,y)=v(x,y)_+\di\gamma_1(x,y)-v(x,y)_-\di\gamma_2(x,y)$, with $\di\gamma_1(x,y)=\di\rho(x)\mu(y)$ and $\di\gamma_2(x,y)=\di\mu(x)\di\rho(y)$. Applying Lemma \ref{lem:A:TV:bound} for $\Phi(x,y)=2\wedge|x-y|$ and noticing $\Phi(x,y) \leq |x-y| \leq |x-y|\vee |x-y|^2$, we arrive at the bound
\begin{align*}
\left(\frac12\iint_G(2\wedge|x-y|)\eta(x,y)\di\bj\right)^2&\le \cA(\mu;\rho,\bj) \iint_G(2\wedge|x-y|)^2\eta(x,y)(\di\gamma_1+\di\gamma_2)\\
&\le\cA(\mu;\rho,\bj)\, 2 \iint_G \bra*{\abs{x-y}^2 \vee \abs{x-y}^4} \, \eta(x,y)\di\mu(y) \di\rho(x) \\
&\le\cA(\mu;\rho,\bj) \, 2 C_\eta,
\end{align*}
where the last estimate follows from~\ref{it:as-conv} and the integral is finite since $\rho\in \cP(\Rd)$.
\end{proof}
\begin{lemma}[Convexity of the action]\label{lem:convexity}
  Let $\mu^i\in\cM^+(\Rd)$, $\rho^i \in \mP(\Rd)$ and $\bj^i \in \cM(G)$ for $i=0,1$. For $\tau \in (0,1)$ such that $\mu^\tau = (1-\tau) \mu^0 + \tau \mu^1$, $\rho^\tau = (1-\tau) \rho^0 + \tau \rho^1$ and $\bj^\tau = (1-\tau) \bj^0 + \tau \bj^1$, it holds
  \begin{equation*}
   \cA(\mu^\tau;\rho^\tau, \bj^\tau)\le (1-\tau) \cA(\mu^0;\rho^0,\bj^0) + \tau \cA(\mu^1;\rho^1,\bj^1).
  \end{equation*}
\end{lemma}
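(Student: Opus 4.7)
The plan is to reduce the statement to the pointwise joint convexity of the integrand and then integrate. The main tool is that $\alpha$ is the perspective of the convex function $\phi(s)=(s_+)^2$, and perspectives of convex functions are jointly convex.

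First I would fix a common reference measure. Set
\[
  |\lambda| := \gamma_1^0 + \gamma_2^0 + |\bj^0| + \gamma_1^1 + \gamma_2^1 + |\bj^1| \in \cM^+(G),
\]
where $\gamma_1^i = \rho^i\otimes\mu^i$ and $\gamma_2^i = \mu^i\otimes\rho^i$ for $i=0,1$. This measure dominates all six measures as well as their convex combinations $\gamma_j^\tau = (1-\tau)\gamma_j^0+\tau\gamma_j^1$ and $\bj^\tau = (1-\tau)\bj^0 + \tau\bj^1$. After the remark following Definition~\ref{def:action}, the value of $\cA$ does not depend on the particular dominating measure, so I may use $|\lambda|$ to compute all three actions. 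Introduce the densities $\tilde\gamma_j^i := \di\gamma_j^i/\di|\lambda|$ and $\tilde j^i := \di\bj^i/\di|\lambda|$; by linearity of Radon--Nikodym derivatives the corresponding densities of the $\tau$-interpolants are $(1-\tau)\tilde\gamma_j^0 + \tau\tilde\gamma_j^1$ and $(1-\tau)\tilde j^0 + \tau\tilde j^1$. If necessary (and exactly as noted after Definition~\ref{def:action}) I replace $|\lambda|$ by $(|\lambda|+|\lambda|^\top)/2$ so that it is symmetric; this does not affect any of the densities up to $|\lambda|$-null sets.

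Next I would record joint convexity of $\alpha$. For fixed $r>0$ one has $\alpha(j,r) = r\,\phi(j/r)$ with $\phi(s)=(s_+)^2$, which is convex on $\R$; the extension to $r=0$ via \eqref{eq:def:alpha} is exactly the lower semicontinuous recession extension, so the resulting function $\alpha\colon \R\times\R_+\to [0,\infty]$ is jointly convex (it is the perspective of $\phi$). This is in fact already stated in the paragraph preceding Lemma~\ref{lem:l.s.c.action}. Applied $|\lambda|$-a.e., for $i=0,1$ and each of the two summands, this yields
\begin{align}
\alpha\bra[\big]{(1-\tau)\tilde j^0+\tau\tilde j^1,(1-\tau)\tilde\gamma_1^0+\tau\tilde\gamma_1^1}
&\le (1-\tau)\alpha(\tilde j^0,\tilde\gamma_1^0)+\tau\,\alpha(\tilde j^1,\tilde\gamma_1^1),\\
\alpha\bra[\big]{-(1-\tau)\tilde j^0-\tau\tilde j^1,(1-\tau)\tilde\gamma_2^0+\tau\tilde\gamma_2^1}
&\le (1-\tau)\alpha(-\tilde j^0,\tilde\gamma_2^0)+\tau\,\alpha(-\tilde j^1,\tilde\gamma_2^1).
\end{align}

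Finally I would multiply both inequalities by $\eta\ge 0$, sum them, and integrate over $G$ against $|\lambda|$. Using the expression \eqref{eq:def-action} of $\cA$ with respect to the common dominating measure $|\lambda|$ both for the endpoints and for the interpolant, the left-hand side equals $2\cA(\mu^\tau;\rho^\tau,\bj^\tau)$ and the right-hand side equals $2(1-\tau)\cA(\mu^0;\rho^0,\bj^0)+2\tau\cA(\mu^1;\rho^1,\bj^1)$, which proves the claim. The only subtle point—and the one I would verify with some care—is that the choice of $|\lambda|$ is legitimate for all three actions simultaneously and that $\alpha$'s extension at $r=0$ still satisfies the pointwise convexity inequality (if either $\tilde\gamma_j^0$ or $\tilde\gamma_j^1$ vanishes while the corresponding $\tilde j^i$ does not, the right-hand side is $+\infty$ and the inequality is trivial).
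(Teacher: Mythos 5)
Your proposal is correct and follows essentially the same route as the paper's proof: choose a single dominating measure for all six measures, note that the densities of the convex combinations are the convex combinations of the densities, and conclude by the joint convexity of $\alpha$ integrated against $\eta\di|\lambda|$. The extra details you supply (the explicit choice of $|\lambda|$, the perspective-function justification of convexity, and the check at $r=0$) are consistent with the paper, which leaves them implicit.
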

\begin{proof}
Let us consider a measure $\lambda\in\cM(G)$ such that $\di\gamma_j^i=\tilde\gamma_j^i\di\lambda$ and $\di\bj^i=\tilde{\bj}^i\di\lambda$ for $i=0,1$ and $j=1,2$. Then, the convex combinations are such that $\di\gamma_j^\tau=\tilde\gamma_j^\tau\di\lambda$ and $\di\bj^{\tau}=\tilde{\bj}^{\tau}\di\lambda$, where
\begin{align*}
    &\tilde{\gamma}_j^\tau=(1-\tau)\tilde{\gamma}_j^0 + \tau\tilde{\gamma}_j^1, \qquad \text{for } j=1,2,\\
  \text{and}\qquad  &\tilde{\bj}^{\tau}=(1-\tau)\tilde{\bj}^0 + \tau\tilde{\bj}^1.
\end{align*}
Using the convexity of the function $\alpha$ we get the result, that is,
\begin{align*}
    \cA(\mu^\tau;\rho^\tau,\bj^\tau)&=\frac12\iint_G\left(\alpha(\tilde{\bj}^\tau,\tilde{\gamma}_1^\tau)+\alpha(-\tilde{\bj}^\tau,\tilde{\gamma}_2^\tau)\right)\eta(x,y)\di\lambda(x,y)\\
    &\le \frac{1-\tau}{2}\iint_G\left(\alpha(\tilde{\bj}^0,\tilde{\gamma}_1^0)+\alpha(-\tilde{\bj}^0,\tilde{\gamma}_2^0)\right)\eta(x,y)\di\lambda(x,y)\\
    &\phantom{\le} +\frac{\tau}{2}\iint_G\left(\alpha(\tilde{\bj}^1,\tilde{\gamma}_1^1)+\alpha(-\tilde{\bj}^1,\tilde{\gamma}_2^1)\right)\eta(x,y)\di\lambda(x,y)\\
    &=(1-\tau)\cA(\mu^0;\rho^0,\bj^0) + \tau\cA(\mu^1;\rho^1,\bj^1). \qedhere
\end{align*}
\end{proof}
\subsection{Nonlocal continuity equation}\label{subsec:nonloc-cont-eq}
In view of the considerations made in Section \ref{subsec:action}, we now deal with the nonlocal continuity equation 
\begin{equation}\label{eq:nlce_measures}
    \partial_t\rho_t+\dgrad\cdot\bj_t=0 \qquad \text{on}\ (0,T)\times\Rd,
\end{equation}
where $(\rho_t)_{t\in[0,T]}$ and $(\bj_t)_{t\in[0,T]}$ are unknown Borel families of measures in $\mP(\Rd)$ and $\cM(G)$, respectively. Equation \eqref{eq:nlce_measures} is understood in the weak form: $\forall\varphi\in C_\mathrm{c}^\infty((0,T)\times\Rd)$,
\begin{equation}\label{eq:nce-weak}
\int_0^T\int_\Rd\partial_t\varphi_t(x)\di\rho_t(x)\di t +\frac12\int_0^T\iint_G\dgrad\varphi_t(x,y)\eta(x,y)\di\bj_t(x,y)\di t=0.
\end{equation}
Since $|\dgrad\varphi(x,y)|\le||\varphi||_{C^1}(2\wedge|x-y|)$, the weak formulation is well-defined under the integrability condition
\begin{equation}\label{eq:integrability-cond}
\int_0^T\iint_G(2\wedge|x-y|)\eta(x,y)\di\bj_t(x,y)\di t<\infty .
\end{equation}
\begin{rem}
The integrability condition~\eqref{eq:integrability-cond} is automatically satisfied by a pair $(\rho_t, \bj_t)_{t\in [0,T]}$ such that $\int_0^T \cA(\mu;\rho_t,\bj_t) \di t< \infty$, due to Corollary \ref{cor:boundwithA}.
\end{rem}
Hence we arrive at the following definition of weak solution of the nonlocal continuity equation:
\begin{definition}[Nonlocal continuity equation in flux form]\label{def:nce-flux-form}
A pair $(\rho,\bj)\colon [0,T] \to \cP(\Rd)\times \cM(G)$
is called a \emph{weak solution} to the nonlocal continuity equation \eqref{eq:nlce_measures} provided that\begin{listi}
\item 
$(\rho_t)_{t\in[0,T]}$ is weakly continuous curve in $\mP(\Rd)$;
\item 
$(\bj_t)_{t\in[0,T]}$ is a Borel-measurable curve in $\cM(G)$;
\item 
the pair $(\rho,\bj)$ satisfies \eqref{eq:nce-weak}.
\end{listi}
We denote the set of all weak solutions on the time interval $[0,T]$ by $ \CE_T$.
For $\rho^0,\rho^1\in\mP(\Rd)$, a pair $(\rho,\bj)\in \CE(\rho^0,\rho^1)$ if $(\rho,\bj)\in\CE:=\CE_1$ and in addition $\rho(0)=\rho^0$ and $\rho(1)=\rho^1$.
\end{definition}
The following lemma shows that any weak solution satisfying~\eqref{eq:nce-weak}, which additionally satisfies the integrability condition~\eqref{eq:integrability-cond} has a weakly continuous representative and hence is a weak solution in the sense of Definition~\ref{def:nce-flux-form}. This observation justifies the terminology of curve in the space of probability measures; see \cite[Lemma 8.1.2]{AGS} and \cite[Lemma 3.1]{Erb14}.
\begin{lemma}\label{lem:nce:weak_cont}
Let $(\rho_t)_{t\in[0,T]}$ and $(\bj_t)_{t\in[0,T]}$ be Borel families of measures in $\mP(\Rd)$ and $\cM(G)$ satisfying \eqref{eq:nce-weak} and \eqref{eq:integrability-cond}. Then there exists a weakly continuous curve $(\bar{\rho}_t)_{t\in[0,T]}\subset\mP(\Rd)$ such that $\bar{\rho}_t=\rho_t$ for a.e.\ $t\in[0,T]$. Moreover, for any $\varphi\in C_\mathrm{c}^\infty([0,T]\times\Rd)$ and all $0\le t_0\le t_1\le T$ it holds
\begin{equation}\label{eq:CE:weak:t01}
\begin{split}
\int_\Rd\varphi_{t_1}(x)\di\bar{\rho}_{t_1}(x)-\int_\Rd\varphi_{t_0}(x)\di\bar{\rho}_{t_0}(x)&=\int_{t_0}^{t_1}\int_\Rd\partial_t\varphi_t(x)\di\rho_t(x)\di t\\
&\quad +\frac12\int_{t_0}^{t_1}\iint_G\dgrad\varphi_t(x,y)\eta(x,y)\di\bj_t(x,y)\di t.
\end{split}
\end{equation}
\end{lemma}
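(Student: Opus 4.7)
The plan is to exploit the weak formulation \eqref{eq:nce-weak} with separable test functions $\varphi(t,x) = \chi(t)\phi(x)$, where $\chi \in C_\mathrm{c}^\infty((0,T))$ and $\phi \in C_\mathrm{c}^\infty(\Rd)$, in order to deduce that for each such $\phi$ the function $u_\phi \colon t \mapsto \int_{\Rd} \phi \di\rho_t$ admits an absolutely continuous representative on $[0,T]$. Inserting such a test into \eqref{eq:nce-weak} yields
\[
\int_0^T \chi'(t)\, u_\phi(t) \di t = -\frac{1}{2} \int_0^T \chi(t) \iint_G \dgrad\phi(x,y)\, \eta(x,y) \di\bj_t(x,y) \di t,
\]
which identifies the distributional derivative of $u_\phi$ with the measurable function $t \mapsto \tfrac12 \iint_G \dgrad\phi\,\eta \di\bj_t$. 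Combining the pointwise bound $|\dgrad\phi(x,y)| \leq \|\phi\|_{C^1}(2 \wedge |x-y|)$ with the integrability hypothesis \eqref{eq:integrability-cond} shows that this derivative lies in $L^1(0,T)$, whence $u_\phi$ admits an absolutely continuous representative $\bar u_\phi$ on $[0,T]$.

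Next, I fix a countable family $\{\phi_n\}_{n\in\N} \subset C_\mathrm{c}^\infty(\Rd)$ whose linear span is dense in $C_0(\Rd)$ for the uniform norm, and let $N\subset [0,T]$ be the Lebesgue-null set outside of which $u_{\phi_n}(t)= \bar u_{\phi_n}(t)$ holds for every $n$. For $t \in [0,T]\setminus N$, the values $\{\bar u_{\phi_n}(t)\}_{n\in\N}$ coincide with $\{\int \phi_n \di\rho_t\}_{n\in\N}$ and so define a positive linear functional of norm at most one on $\mathrm{span}\{\phi_n\}$ which, by density and the Riesz representation theorem, extends uniquely to a sub-probability measure $\bar\rho_t$ coinciding with $\rho_t$. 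For $t \in N$, I choose $t_k \notin N$ with $t_k \to t$; the sequence $(\bar\rho_{t_k})$ consists of probability measures and is uniformly tight (obtained by testing \eqref{eq:nce-weak} against $\chi(s)\phi_R(x)$, where $\phi_R$ is a smooth cutoff approximating $1$, to control mass escape via the $L^1$-bound on the flux term), so any narrow cluster point is a probability measure. By continuity of every $\bar u_{\phi_n}$, all cluster points are identified by the same values $\bar u_{\phi_n}(t)$, so $\bar\rho_t$ is uniquely defined, and the resulting map $t \mapsto \bar\rho_t$ is narrowly continuous on $[0,T]$.

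To derive \eqref{eq:CE:weak:t01} for $\varphi \in C_\mathrm{c}^\infty([0,T]\times\Rd)$ and $0\le t_0\le t_1\le T$, I would approximate $\mathbf{1}_{[t_0,t_1]}$ by smooth cutoffs $\chi_\delta\in C_\mathrm{c}^\infty(\R)$ with $-\chi_\delta'(t)\di t \rightharpoonup \delta_{t_1} - \delta_{t_0}$ narrowly on $\R$, insert $\chi_\delta(t)\varphi(t,x)$ in place of the test function in \eqref{eq:nce-weak}, and pass to the limit $\delta\to 0$: the boundary terms converge thanks to the narrow continuity of $\bar\rho$ at every $t$, while the interior terms converge by dominated convergence using the $L^1$-bound from \eqref{eq:integrability-cond}. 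The main obstacle is the construction of $\bar\rho$ itself, specifically the passage from the a.e.\ identification $u_{\phi_n} \equiv \bar u_{\phi_n}$ on a countable dense family to a single narrowly continuous $\mP(\Rd)$-valued curve defined pointwise on all of $[0,T]$; this requires coupling the simultaneous continuity of all $\bar u_{\phi_n}$, the density of $\{\phi_n\}$ in $C_0(\Rd)$, and uniform tightness, in order to ensure that narrow limits at exceptional times exist and remain probability measures rather than mere sub-probabilities.
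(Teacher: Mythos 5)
Your argument is correct and is essentially the standard proof: testing with separable functions to get absolutely continuous representatives of $t\mapsto\int\phi\,\di\rho_t$, a countable dense family plus uniform tightness (via cutoffs and the $L^1$ flux bound from \eqref{eq:integrability-cond}) to build the narrowly continuous representative, and smooth approximations of $\chi_{[t_0,t_1]}$ to obtain \eqref{eq:CE:weak:t01}. The paper does not write out a proof but instead cites \cite[Lemma 8.1.2]{AGS} and \cite[Lemma 3.1]{Erb14}, whose arguments follow exactly this scheme, so your proposal matches the intended approach (up to minor bookkeeping, e.g.\ enlarging the null set to cover the countably many cutoff functions used for tightness).
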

We now prove propagation of second-order moments.
\begin{lemma}[Uniformly bounded second moments]\label{lem:CE:tightness}
Let $(\mu^n)_n\subset\cM^+(\Rd)$ such that~\ref{it:as-conv} holds uniformly in $n$. Let $(\rho_0^n)_n \subset \cP_{2}(\Rd)$ be such that $\sup_{n\in\N} M_2(\rho_0^n) < \infty$ and $(\rho^n,\bj^n)_n \subset \CE_T$ be such that $\sup_{n\in\N} \int_0^T \cA(\mu^n;\rho_t^n,\bj_t^n)\di t<\infty$. Then $\sup_{t\in [0,T]}\sup_{n\in\N} M_2(\rho_t^n) < \infty$.
\end{lemma}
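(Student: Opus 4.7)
The plan is to test the nonlocal continuity equation against $\varphi(x)=|x|^2$ and derive a Bihari--Grönwall inequality for $t\mapsto M_2(\rho_t^n)$. Since $|x|^2$ is unbounded, I would first use the truncation $\varphi_R(x):=|x|^2\wedge R^2$, which is bounded and Lipschitz. Thanks to Corollary~\ref{cor:boundwithA} and the assumed bound $A_0:=\sup_n\int_0^T\cA(\mu^n;\rho_s^n,\bj_s^n)\di s<\infty$, the integrability condition~\eqref{eq:integrability-cond} holds, so that Lemma~\ref{lem:nce:weak_cont} applied to $\varphi_R$ yields
\[
  m_R(t):=\int_\Rd \varphi_R\di\rho_t^n = m_R(0) + \frac12\int_0^t\iint_G \dgrad\varphi_R(x,y)\,\eta(x,y)\di\bj_s^n(x,y)\di s .
\]

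The key pointwise estimate, which is crucially \emph{independent of $R$}, comes from the fact that $s\mapsto s\wedge R^2$ is $1$-Lipschitz:
\[
  |\dgrad\varphi_R(x,y)|\le \bigl||y|^2-|x|^2\bigr|\le |x-y|(|x|+|y|)=:\Phi(x,y) .
\]
Applying Lemma~\ref{lem:A:TV:bound} with this $\Phi$, together with the elementary inequalities $\Phi^2\le 2|x-y|^2(|x|^2+|y|^2)$ and $|y|^2\le 2|x|^2+2|x-y|^2$ (and its symmetric counterpart on $\gamma_2^s$), and invoking the uniform-in-$n$ assumption~\ref{it:as-conv}, I would arrive at
\[
  \iint_G \Phi^2\,\eta\,(\di\gamma_1^s+\di\gamma_2^s)\le C_1\bigl(1+M_2(\rho_s^n)\bigr)
\]
for a constant $C_1$ depending only on $C_\eta$. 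Combined with Lemma~\ref{lem:A:TV:bound}, integrating in time, and passing $R\to\infty$ by monotone convergence (since $\varphi_R\uparrow|\cdot|^2$), this yields the integral inequality
\[
  M_2(\rho_t^n) \le M_2(\rho_0^n) + \sqrt{C_1}\int_0^t\sqrt{\cA(\mu^n;\rho_s^n,\bj_s^n)}\,\sqrt{1+M_2(\rho_s^n)}\,\di s .
\]

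Setting $G(t):=1+M_2(\rho_t^n)$, the standard Bihari comparison with $\psi(u)=\sqrt{u}$ gives
\[
  \sqrt{G(t)}\le\sqrt{G(0)}+\tfrac{1}{2}\sqrt{C_1}\int_0^t\sqrt{\cA(\mu^n;\rho_s^n,\bj_s^n)}\,\di s ,
\]
whence by Cauchy--Schwarz and the assumed bound $\int_0^T\cA(\mu^n;\rho_s^n,\bj_s^n)\di s\le A_0$,
\[
  \sup_{n}\sup_{t\in[0,T]} M_2(\rho_t^n)\le \Bigl(\sqrt{1+\sup_n M_2(\rho_0^n)} + \tfrac{1}{2}\sqrt{C_1 TA_0}\Bigr)^2<\infty .
\]

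The most delicate step is the rigorous justification of the Bihari comparison when the integral inequality for $M_2$ is \emph{a priori} valued in $[0,\infty]$. I would handle this through a stopping-time argument: since $M_2(\cdot)$ is lower semicontinuous with respect to narrow convergence and $t\mapsto\rho_t^n$ is narrowly continuous, the set $\{t\in[0,T]:M_2(\rho_t^n)\le K\}$ is closed for every $K>0$. Choosing $K$ strictly larger than the nominal Bihari bound above and arguing by contradiction on $\tau_K:=\inf\{t:M_2(\rho_t^n)>K\}$, one exploits the integral inequality on $[0,\tau_K)$, where $M_2(\rho_s^n)\le K$ ensures every quantity is finite, to force $\tau_K=T$.
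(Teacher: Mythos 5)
Your overall strategy---truncate $|x|^2$, test the nonlocal continuity equation, control the flux term via Lemma~\ref{lem:A:TV:bound} together with the uniform moment bound~\ref{it:as-conv}, and close with a square-root (Bihari) Gr\"onwall inequality---is the same route the paper takes; the paper uses the smooth, compactly supported cutoff $\psi_R(x)=\varphi_R(x)^2(|x|^2+1)$ instead of $|x|^2\wedge R^2$, which also sidesteps the fact that the weak formulation \eqref{eq:CE:weak:t01} is stated for $C_\mathrm{c}^\infty$ test functions (your $\varphi_R$ needs an additional smoothing/cutoff approximation, but that is routine).

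The genuine gap is in your last step. After sending $R\to\infty$ you obtain an integral inequality whose right-hand side contains the \emph{untruncated} moment $M_2(\rho_s^n)$, which is a priori possibly $+\infty$, and you propose to repair this with a stopping-time argument based on lower semicontinuity of $M_2$ under narrow convergence. This does not close: lower semicontinuity gives $M_2(\rho_{\tau_K}^n)\le\liminf_{t\uparrow\tau_K}M_2(\rho_t^n)$, i.e.\ it rules out downward jumps in the limit, but it provides no upper semicontinuity in $t$, which is what you need to propagate the bound $M_2\le K$ to times just beyond $\tau_K$. Concretely, if $\tau_K<T$, Bihari on $[0,\tau_K)$ and lower semicontinuity yield $M_2(\rho_{\tau_K}^n)\le B<K$, so $\tau_K\notin\{M_2>K\}$; but then there are times $t_j\downarrow\tau_K$ with $M_2(\rho_{t_j}^n)>K$, and your integral inequality at $t_j$ involves $M_2(\rho_s^n)$ for $s\in(\tau_K,t_j)$, which is uncontrolled (possibly infinite), so no contradiction follows. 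The fix is to keep $R$ fixed through the Gr\"onwall step: sharpen your Lipschitz bound to $\abs{\dgrad(|x|^2\wedge R^2)}\le |x-y|\,\bigl(|x|\wedge R+|y|\wedge R\bigr)=:\Phi_R(x,y)$, and use $|y|\wedge R\le |x|\wedge R+|x-y|$ (and the symmetric bound on the $\gamma_2$-integral) to get $\iint_G\Phi_R^2\,\eta\,(\di\gamma_1+\di\gamma_2)\le C\bigl(1+m_R(s)\bigr)$ with $m_R(s)=\int(|x|^2\wedge R^2)\di\rho_s^n$, a finite and continuous function of $s$. Bihari then applies for each fixed $R$ with a bound uniform in $R$ and $n$, and monotone convergence as $R\to\infty$ finishes the proof; this closed-truncated-inequality argument is in substance what the paper's proof with $\psi_R$ accomplishes.
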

\begin{proof}
We proceed by considering the time derivative of the second-order moment of $\rho_t^n$ for all $t\in[0,T]$ and $n\in\N$. Since $x\mapsto |x|^2$ is not an admissible test function in~\eqref{eq:nce-weak}, we introduce a smooth cut-off function $\varphi_R$ satisfying $\varphi_R(x)=1$ for $x\in B_R$, $\varphi_R(x)=0$ for $x \in \Rd \setminus B_{2R}$ and $\abs{\nabla \varphi_R}\leq \frac{2}{R}$. Then, we can use the definition of solution with the function $\psi_R(x)= \varphi_R(x)^2 (|x|^2+1)$ and apply Lemma \ref{lem:A:TV:bound} with $\Phi=\dgrad\psi_R$ to obtain, for all $t\in[0,T]$ and $n\in\N$,
\begin{align*}
    \frac{\di}{\di{t}}\int_\Rd \psi_R(x)\di\rho_t^n(x)
    &=\frac12\iint_G \dgrad \psi_R(x,y)\,\eta(x,y)\di\bj_t^n(x,y)\\
    &\le\sqrt{\cA(\mu^n;\rho_t^n,\bj_t^n)}\left(\iint_G \abs*{\dgrad\psi_R(x,y)}^2\eta(x,y)(\di\gamma_1^n+\di\gamma_2^{n})\right)^\frac{1}{2}.
\end{align*}
For $R\geq 1$, we estimate, for all $(x,y)\in G$,
\begin{align}\label{eq:2nd:p0}
  \abs{\dgrad \psi_R(x,y)}^2 &\leq 2 \abs{\varphi_R(y)^2-\varphi_R(x)^2}^2 + 2 \abs{ \varphi_R(y)^2 |y|^2 - \varphi_R(x)^2 |x|^2}^2 ,
\end{align}
and observe that
\[
  \abs*{ \dgrad \varphi_R^2(x,y) } = \abs*{\dgrad \varphi_R(x,y)\bra*{\varphi_R(x) + \varphi_R(y)}} \leq \frac{4}{R} \abs*{x-y}. 
\]
Hence the first term in~\eqref{eq:2nd:p0} is bounded by $32 \abs{x-y}^2$, since $R\ge1$. For the second term in~\eqref{eq:2nd:p0}, we abbreviate by setting $r = \varphi_R(x) \abs{x}$ and $s = \varphi_R(y)\abs{y}$ and compute the bound
\begin{align*}
 \abs{s^2 - r^2}^2 =  \abs{s-r}^2 \abs{s+r}^2 \leq 2 \abs{s-r}^4 + 8 |r|^2 \abs{s-r}^2 \leq 8 \bra*{|r|^2+1} \bra*{ |s-r|^2 \vee |s-r|^4 }. 
\end{align*}
It is easy to check that $x\mapsto \varphi_R(x) \abs{x}$ is globally Lipschitz and we can conclude that, for some numerical constant $C>0$, for all $(x,y)\in G$ we have
\[
  \abs*{ \dgrad \psi_R(x,y) }^2\leq 32\abs{x-y}^2 + C \abs{x}^2 \bra*{ \abs{x-y}^2 \vee \abs{x-y}^4} \leq C\bra*{\abs{x}^2+1}\bra*{ \abs{x-y}^2 \vee \abs{x-y}^4}. 
\]
Thus, by sending $R\to\infty$ and using~\ref{it:as-conv}, it follows that
\begin{align*}
    \frac{\di}{\di{t}}\int_\Rd \bra*{|x|^2+1}\di\rho_t^n(x) &\le \sqrt{\cA(\mu^n;\rho_t^n,\bj_t^n)}\left(2 C C_\eta\int_\Rd \bra*{|x|^2+1}\di\rho^n_t(x)\right)^\frac{1}{2}
\end{align*}
By integrating the above differential inequality, we arrive at the bound
\[
 \int_\Rd |x|^2\di\rho_t^n(x)\le 2 \int_\Rd \bra*{|x|^2+1}\di\rho_0^n(x)+ 2 C C_\eta T \int_0^T \cA(\mu^n;\rho_t^n,\bj_t^n)\di t,
\]
whence we conclude by taking the suprema in $n\in \N$ and $t\in [0,T]$.
\end{proof}
Now we are ready to show compactness for the solutions to \eqref{eq:nlce_measures}.
\begin{proposition}[Compactness of solutions to the nonlocal continuity equation]\label{prop:compactness-sol-ce}
  Let $(\mu^n)_n\subset\cM^+(\R^d)$ and suppose that $(\mu^n)_n$ narrowly converges to $\mu$. Moreover, suppose that the base measures $\mu^n$ and $\mu$ satisfy~\ref{it:as-conv} and~\ref{it:as-tight} uniformly in $n$. Let $(\rho^n,\bj^n) \in \CE_T$ for each $n\in\mathbb{N}$ be such that $(\rho_0^n)_n$ satisfies $\sup_{n\in\N} M_2(\rho_0^n)< \infty$ and
\begin{equation}\label{eq:uniform-integ-A}
  \sup_{n\in\N}\int_0^T\cA(\mu^n;\rho_t^n,\bj_t^n)\di t<\infty.
\end{equation}
Then, there exists $(\rho,\bj)\in \CE_T$ such that, up to a subsequence, as $n\to\infty$ it holds
\begin{align*}
  \rho_t^n\rightharpoonup\rho_t\quad &\text{for all}\ t\in[0,T],\\
  \bj^n\rightharpoonup\bj\quad\ &\text{in}\ \cM_{\mathrm{loc}}(G\times[0,T]),
\end{align*}
with $\rho_t\in\mP_2(\Rd)$ for any $t\in[0,T]$. Moreover, the action is lower semicontinuous along the above subsequences $(\mu^n)_n, (\rho^n)_n$ and $(\bj^n)_n$, i.e.,
\begin{equation*}
\liminf_{n\to\infty}\int_0^T\cA(\mu^n;\rho_t^n,\bj_t^n)\di t\ge\int_0^T\cA(\mu;\rho_t,\bj_t)\di t.
\end{equation*}
\end{proposition}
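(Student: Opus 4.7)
The plan is four-fold: extract uniform moment bounds, prove time-equicontinuity of the curves $\rho^n$ to get pointwise-in-$t$ narrow limits, extract a locally narrowly convergent subsequence of $\bj^n$, and then pass to the limit in the weak form~\eqref{eq:nce-weak}. The lower semicontinuity of the time-integrated action will follow from a space-time version of Lemma~\ref{lem:l.s.c.action}.

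To set up the curves, I first invoke Lemma~\ref{lem:CE:tightness} under the uniform validity of~\ref{it:as-conv} to obtain $M := \sup_{t \in [0,T],\, n \in \mathbb{N}} M_2(\rho_t^n) < \infty$, hence uniform tightness of $\{\rho_t^n\}$ in $\cP(\Rd)$. For $\varphi \in C_\mathrm{c}^1(\Rd)$ and $0 \le s \le t \le T$, the identity~\eqref{eq:CE:weak:t01} together with $|\dgrad\varphi(x,y)| \le \|\nabla\varphi\|_\infty (2 \wedge |x-y|)$ and Corollary~\ref{cor:boundwithA} yields
\[
  \bigl|\langle\varphi,\, \rho_t^n - \rho_s^n\rangle\bigr| \le \|\nabla\varphi\|_\infty \int_s^t \sqrt{2 C_\eta\, \cA(\mu^n;\rho_r^n,\bj_r^n)}\, \di{r} \le \|\nabla\varphi\|_\infty\, \sqrt{2 C_\eta\, |t-s|\, A_0},
\]
where $A_0 := \sup_n \int_0^T \cA(\mu^n;\rho_t^n,\bj_t^n)\, \di{t}$. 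This H\"older-$\tfrac12$ estimate, together with tightness, lets me apply a refined Arzel\`a--Ascoli argument (in the spirit of \cite[Proposition~3.3.1]{AGS}) to extract a subsequence along which $\rho_t^n \rightharpoonup \rho_t$ for every $t \in [0,T]$, with the limit narrowly continuous and $\rho_t \in \mP_2(\Rd)$ by lower semicontinuity of the second moment.

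For the fluxes, I view $\bj^n$ as a measure on $G \times [0,T]$. For any compact $K \subset G \times [0,T]$, continuity and positivity of $\eta$ on $G$ give $(2 \wedge |x-y|)\eta(x,y) \ge c_K > 0$ on $K$, so Corollary~\ref{cor:boundwithA} with Cauchy--Schwarz in time yields $|\bj^n|(K) \le c_K^{-1}\sqrt{2 C_\eta T A_0}$, and a diagonal extraction delivers $\bj^n \rightharpoonup \bj$ in $\cM_{\mathrm{loc}}(G \times [0,T])$. To pass to the limit in~\eqref{eq:nce-weak} for $\varphi \in C_\mathrm{c}^\infty((0,T) \times \Rd)$ with spatial support in $B_R$, I split the flux integral into three pieces: a near-diagonal part $\{|x-y| \le \delta\}$, a compact middle part $\{|x-y| > \delta,\, |x|,|y| \le R'\}$, and a far-field part $\{|x-y| > \delta,\, \max(|x|,|y|) > R'\}$. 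Lemma~\ref{lem:A:TV:bound} applied with $\Phi = \|\nabla\varphi\|_\infty |x-y| \mathbf{1}_{\{|x-y| \le \delta\}}$ together with Assumption~\ref{it:as-tight} renders the near-diagonal contribution arbitrarily small uniformly in $n$ as $\delta \to 0$; the middle part is handled directly by $\cM_{\mathrm{loc}}$-convergence, since $\dgrad\varphi \cdot \eta$ is continuous and compactly supported in $G \times [0,T]$ there; and the far-field part exploits that $\dgrad\varphi$ vanishes unless $\min(|x|,|y|) \le R$, combined with the uniform tail estimate $\int_{|y| \ge R'} \eta(x,y)\, \di\mu^n(y) \le C_\eta/(R'-R)^2$ from~\ref{it:as-conv}. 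The $\partial_t\varphi$ term passes to the limit by pointwise-in-$t$ narrow convergence and dominated convergence, so $(\rho,\bj) \in \CE_T$.

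For the action inequality, the time-integrated action equals the value at the vector-valued measure $(\bj^n,\, \di\rho_t^n(x)\, \di\mu^n(y)\,\di{t},\, \di\mu^n(x)\,\di\rho_t^n(y)\,\di{t})$ on $G \times [0,T]$ of a single integral functional with the jointly convex, positively one-homogeneous, lower semicontinuous integrand $\alpha$ from~\eqref{eq:def:alpha} weighted by the continuous nonnegative $\eta$; the Buttazzo theorem \cite[Theorem~3.4.3]{But89} invoked in Lemma~\ref{lem:l.s.c.action} therefore applies verbatim on this extended space and delivers the claim. The main obstacle is the passage to the limit in the flux integral, where the three-region decomposition and the simultaneous use of~\ref{it:as-conv} and~\ref{it:as-tight} to control the near-diagonal and far-field contributions uniformly in $n$ constitute the delicate bookkeeping.
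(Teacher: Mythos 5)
Your proof is correct, and its skeleton coincides with the paper's: uniform second moments from Lemma~\ref{lem:CE:tightness}; local total-variation bounds for the space--time fluxes from Corollary~\ref{cor:boundwithA} together with the continuity and positivity of $\eta$ on compact subsets of $G$, giving $\bj^n\rightharpoonup\bj$ in $\cM_{\mathrm{loc}}(G\times[0,T])$; a decomposition of the flux integral whose near-diagonal part is controlled uniformly in $n$ via Lemma~\ref{lem:A:TV:bound} and \ref{it:as-tight}, whose compact middle part passes to the limit by the local weak-$*$ convergence, and whose far part is uniformly small; and lower semicontinuity of the integrated action through the convex, positively one-homogeneous integrand $\alpha$.

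Where you genuinely deviate: (i) you obtain the pointwise-in-$t$ narrow convergence of $\rho^n_t$ up front from the H\"older-$\tfrac12$ equicontinuity estimate plus tightness and a refined Arzel\`a--Ascoli argument, whereas the paper first passes to the limit in the flux term and then reads off the limit curve $\rho_t$ from the identity \eqref{eq:CE:weak:t01} together with the tightness of $(\rho^n_0)_n$; your route is self-contained and immediately yields a narrowly continuous limit curve with $\rho_t\in\cP_2(\Rd)$. (ii) You control the far-field region by Chebyshev on $\eta$ via \ref{it:as-conv}, using that the test function pins one of the two points in $B_R$ so that $|x-y|\ge R'-R$ there; the paper instead uses the uniform tail bound $\sup_t\rho^n_t\bigl(\overline B_{\eps^{-1}}^{\mathrm c}\bigr)\to 0$ coming from the moment estimate, so your version does not need the moment bound at that step. (iii) For the lower semicontinuity you apply the Buttazzo-type theorem directly to the space--time measures on $G\times[0,T]$, making explicit a step the paper compresses into a citation of Lemma~\ref{lem:l.s.c.action}. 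Two cosmetic points, not gaps: the bound on $\dgrad\varphi$ should carry $\norm{\varphi}_{C^1}$ rather than $\norm{\nabla\varphi}_\infty$ (the constant $2$ in $2\wedge|x-y|$ controls $2\norm{\varphi}_\infty$), and your sharp indicator splitting must ultimately be implemented with continuous cutoffs compactly supported in $G\times[0,T]$ (exactly the paper's $\varphi_\eps$), since weak-$*$ convergence in $\cM_{\mathrm{loc}}(G\times[0,T])$ only tests against such functions; the uniform near-diagonal and far-field estimates you already have absorb the resulting error terms.
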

\begin{proof}
We argue similarly to \cite[Lemma 4.5]{DNS09}, \cite[Proposition 3.4]{Erb14}. For each $n\in\mathbb{N}$ we define $\bj^n\in\cM(G\times[0,T])$ as $\di\bj^n(x,y,t)=\di\bj_t^n(x,y)\di t$. In view of Lemma~\ref{lem:CE:tightness} there exists $C_2>0$ such that $\sup_{t\in [0,T]}\sup_{n\in \N}  M_2(\rho_t^n) \leq C_2 <+ \infty$.

For any compact sets $K\subset G$ and $I\subseteq[0,T]$, we apply the bound~\eqref{eq:boundwithA} of Corollary~\ref{cor:boundwithA} and the Cauchy--Schwarz inequality to get
\begin{align}
\sup_{n\in\N}|\bj^n|(K\times I)&\le\sup_{n\in\N}\int_I \iint_K \frac{(2\wedge |x-y|)\,\eta(x,y)}{(2\wedge |x-y|)\,\eta(x,y)} \di|\bj_t^n|(x,y)\di t \notag \\
&\le \frac{2 \sqrt{|I|} \sqrt{2C_\eta}}{\inf_{(x,y)\in K} (2\wedge |x-y|)\eta(x,y)}\left(\sup_{n\in\N}\int_0^T\cA(\mu^n;\rho_t^n,\bj_t^n)\di t\right)^\frac{1}{2} . \label{eq:compactness_bound}
\end{align}
Thanks to Assumption~\eqref{it:as:pos-sym-lsc}, we have that $\inf_{(x,y)\in K}  (2\wedge |x-y|)\eta(x,y)>0$ for any compact $K\subset G$. Hence, by \eqref{eq:uniform-integ-A}, $(\bj^n)_n$ has total variation uniformly bounded in $n$ on every compact set of $G\times[0,T]$, which implies, up to a subsequence, $\bj^n\rightharpoonup\bj$ as $n\to\infty$ in $\cM_{\loc}(G \times[0,T])$. Because of the disintegration theorem, there exists a Borel family $(\bj_t)_{t\in[0,T]}$ such that, for all compact sets $I\subseteq[0,T]$ and $K\subset G$, there holds that $\bj(K\times I)=\int_I \bj_t(K) \di t$.
Thanks to the bound~\eqref{eq:compactness_bound}, the family $\{\bj_t\}_{t\in[0,T]}$ still satisfies \eqref{eq:integrability-cond}.

Now, as we need to pass to the limit in \eqref{eq:nce-weak}, we consider a function $\xi\in C_\mathrm{c}^\infty(\Rd)$ and an interval $[t_0,t_1]\subseteq [0,T]$. The function $\chi_{[t_0,t_1]}(t)\dgrad\xi(x,y)$ has no compact support in $[t_0,t_1]\times G$, so we proceed by a truncation argument. Let $\eps>0$ and let us set $I^\eps = [t_0+\eps , t_1-\eps]$, $N_\eps = \overline{B}_{\eps^{-1}} \times \overline{B}_{\eps^{-1}}$, where $B_{\eps^{-1}}= \set*{x \in \R^d: |x|< \eps^{-1}}$, and $G_\eps=\{(x,y)\in G:\eps\le|x-y|\}$. Hence we can find $\varphi_\eps\in C_\mathrm{c}^\infty([t_0,t_1]\times G; [0,1])$ satisfying
\begin{equation}\label{eq:truncation:CE:compact}
  \set*{ \varphi_\eps = 1 } \supseteq I_\eps \times \bra*{ G_\eps \cap N_\eps},
\end{equation}
so that $\varphi_\eps \to \chi_{[t_0,t_1]} \, \chi_G$ as $\eps \to 0$ and $\varphi_\eps \, \chi_{[t_0,t_1]} \, \dgrad \xi$ has compact support in $[t_0,t_1]\times G$. Then, we get thanks to Assumption~\eqref{it:as:pos-sym-lsc}, that
\begin{align}
  \lim_{n\to\infty}\int_{t_0}^{t_1} \iint_G \varphi_\eps(t,x,y)\dgrad\xi(x,y)  \eta(x,y)  \di\bj_t^n(x,y)\di t &= \int_{t_0}^{t_1} \iint_G \varphi_\eps(t,x,y)\dgrad\xi(x,y) \eta(x,y)\di\bj_t(x,y)\di t .
  \label{eq:limn}
\end{align}
Now, it remains to show that 
\begin{equation}\label{eq:limeps}
  \lim_{\eps\to 0}  \sup_{n\in\N} \abs*{\int_{t_0}^{t_1} \iint_G  \bra*{ 1- \varphi_\eps(t,x,y)} \dgrad\xi(x,y)\eta(x,y)\di\bj_t^n(x,y)\di t} = 0.
\end{equation}
We need to estimate terms for which $\varphi_\varepsilon(t,x)<1$. First, setting $I_\eps^\mathrm{c} = [t_0,t_1]\setminus I_\eps$, we note that
\[
  [t_0,t_1] \times G \setminus \set{\varphi_\eps =1} \subseteq \bra[\big]{ I_\eps^\mathrm{c} \times G } \cup \bra[\big]{ I_\eps \times ( G\setminus (G_\eps \cap N_\eps))} =:  M_\eps,
\]
whence, by Lemma \ref{lem:A:TV:bound},
\begin{align*}
 \MoveEqLeft{\abs*{\int_{t_0}^{t_1} \iint_G  \bra*{ 1- \varphi_\eps(t,x,y)} \dgrad\xi(x,y)\eta(x,y)\di\bj_t^n(x,y)\di t}}\\
 &\leq \norm{\xi}_{C^1}  \int_{t_0}^{t_1} \iint_G  \bra*{ 1- \varphi_\eps(t,x,y)} \bra*{ 2 \wedge | x-y|}\eta(x,y) \di\abs{\bj_t^n}(x,y)\di t \\ 
 &\leq 2\norm{\xi}_{C^1} \bra[\bigg]{\int_0^T\cA(\mu^n;\rho_t^n,\bj_t^n)\di t}^\frac{1}{2} \left(\iiint_{M_\eps} \bra*{ 4\wedge | x-y|^2}\eta(x,y)\di\bra[\big]{\gamma^{n}_{1,t} + \gamma^{n}_{2,t}} \di t\right)^\frac{1}{2}.
\end{align*}
Since $4\wedge |x-y|^2 \leq |x-y|^2\vee |x-y|^4$ we have, by Assumption~\ref{it:as-conv}, the bound
\[
  \int_{I_\eps^\mathrm{c}}\iint_G  \bra*{ 4\wedge | x-y|^2}\eta(x,y) \di\bra[\big]{\gamma^{1,n}_t + \gamma^{2,n}_t} \di t \leq  2 |I_\eps^\mathrm{c}| C_\eta = 4 C_\eta \eps.
\]
Likewise, using the symmetry, we arrive at
\[
  \int_{I_\eps} \iint_{G_\eps^\mathrm{c}} \bra*{ 4\wedge | x-y|^2}\eta(x,y) \di\bra[\big]{\gamma^{n}_{1,t} + \gamma^{n}_{2,t}} \di t  = 2\int_0^T \iint_{G_\eps^\mathrm{c}} \bra*{ 4\wedge | x-y|^2} \eta(x,y) \di\mu^n(y) \di\rho_t^n(x)\di{t},
\]
which vanishes as $\eps\to 0$ in view of Assumption~\ref{it:as-tight}. Finally, the last term is estimated again using~\ref{it:as-conv}: 
\begin{align*}
  \int_{I_\eps}\iint_{G\setminus N_\eps} \bra*{ 4\wedge | x-y|^2}\eta(x,y)\di\gamma^{1,n}_t \di t &\leq \int_0^T \int_{\overline{B}_{\eps^{-1}}^\mathrm{c}} \int_\Rd \bra*{4\wedge |x-y|^2} \eta(x,y) \di\mu^n(y) \di\rho_t^n(x) \di t  \\
 &\leq T C_\eta \sup_{t\in [0,T]} \rho_t^n\bra*{\overline{B}_{\eps^{-1}}^\mathrm{c}} \to 0 \qquad\text{as } \eps \to 0 , 
\end{align*}
since $M_2(\rho_t^n) \leq C_2$ for any $n\in\N$ and $t\in [0,T]$ by Lemma~\ref{lem:CE:tightness}.

Combining \eqref{eq:limn} and \eqref{eq:limeps}, we get
\begin{align*}
 \lim_{n\to\infty}\int_{t_0}^{t_1}\iint_G\dgrad\xi(x,y)\,\eta(x,y)\di\bj_t^n(x,y)\di t
 &=\int_{t_0}^{t_1}\iint_G\dgrad\xi(x,y)\,\eta(x,y)\di\bj_t(x,y)\di t.
\end{align*}
By means of the last
convergence, the tightness of $(\rho_0^n)_n$, and \eqref{eq:CE:weak:t01} with $\varphi(t,x)=\xi(x)$, $t_0=0$ and $t_1=T$, we obtain that $(\rho_t^n)_n$ locally narrowly converges to some finite non-negative measure $\rho_t\in\cM^+(\Rd)$ for any $t\in[0,T]$. In particular, for any $\xi\in C_\mathrm{c}^\infty(\Rd)$ and any $t\in[0,T]$, we have
\[
\int_\Rd\xi(x)\di\rho_t(x)=\int_\Rd\xi(x)\di\rho_0(x)+\frac12\int_{0}^{t}\iint_G\dgrad\xi(x,y)\eta(x,y) \di\bj_s(x,y)\di s.
\]
Now, for $R>0$, let us consider a function $\xi_R\in C_\mathrm{c}^\infty(\Rd)$ such that $0\le\xi\le1$, $\xi=1$ on $B_R$, and $\|\xi\|_{C^1}\le1$. Because of the integrability condition~\eqref{eq:integrability-cond}, satisfied thanks to Corollary~\ref{cor:boundwithA}, we have
\[
\left|\int_{0}^{t}\frac12\iint_G\dgrad\xi_R(x,y)\,\eta(x,y) \di\bj_s(x,y)\di s\right|\le\frac12\int_0^t\iint_{G\setminus(B_R\times B_R)}\left(2\wedge|x-y|\right)\eta(x,y) \di|\bj_s|\di s \xrightarrow[R\to\infty]{}0.
\]
Hence the measure $\rho_t$ is actually a probability measure on $\Rd$ for all $t\in[0,T]$. Moreover Lemma~\ref{lem:CE:tightness} ensures that the convergence is global and not only local. As a direct consequence of the previous considerations, $(\rho,\bj)\in\CE_T$ and the lower semicontinuity follows from Lemma \ref{lem:l.s.c.action}.
\end{proof}
\subsection{Nonlocal upwind transportation quasi-metric} \label{sec:nl-trans}
Here, we give a rigorous definition of the nonlocal transportation quasi-metric we introduced in \eqref{eq:ben-bre-formula}. Let us recall that $\eta \colon \{ (x,y)\in \R^d \times \R^d : x\neq y \}\to [0,\infty)$ is the weight function satisfying \eqref{it:as:pos-sym-lsc}. 
\begin{definition}[Nonlocal upwind transportation cost]\label{defn:metric}
    For $\mu\in \cM^+(\Rd)$ satisfying Assumptions~\ref{it:as-conv} and~\ref{it:as-tight}, and $\rho_0,\rho_1\in\mP_2(\Rd)$, the \emph{nonlocal upwind transportation cost} between $\rho_0$ and $\rho_1$ is defined by
\begin{equation}\label{eq:nonloc-upwind-transp-cost}
\mathcal{T}_\mu(\rho_0,\rho_1)^2=\inf\left\{\int_0^1\cA(\mu;\rho_t,\bj_t)\di t:(\rho,\bj)\in\CE(\rho_0,\rho_1)\right\}.
\end{equation}
If $\mu$ is clear from the context, the notation $\cT$ is used in place of $\cT_\mu$.
\end{definition}
Note that Proposition \ref{prop:compactness-sol-ce} ensures the existence of minimizers to \eqref{eq:nonloc-upwind-transp-cost}, when $\cT_\mu<\infty$, which holds when there exists a path of finite action. On the other hand, if this is not the case, the nonlocal upwind transportation cost is infinite. For example, consider the graph with vertices set by $\mu$ and  $\eta$ which is disconnected, meaning that there are $x,y\in \supp \mu$ such that there is no sequence $(x_0=x,x_1,\dots,x_{n-1},x_n=y)_n$ with $\eta(x_i,x_{i+1})>0$ for all $i=0,\dots,n-1$; in this case, $\cT_\mu(\delta_x,\delta_y)=\infty$ since the set of solutions to the continuity equation $\CE(\delta_x,\delta_y)$ is empty.

Due to the one-homogeneity of the action density function $\alpha$ in \eqref{eq:def:alpha}, we have the following reparametrization result, which is similar to \cite[Theorem 5.4]{DNS09}.
\begin{lemma}[Reparametrization]\label{lem:reparametrization}
For any $\mu \in \cM^+(\R^d)$ satisfying Assumptions~\ref{it:as-conv} and~\ref{it:as-tight}, and any $\rho_0,\rho_T\in\mP_2(\Rd)$, it holds that
\[
\mathcal{T}_\mu(\rho_0,\rho_T)=\inf\left\{\int_0^T\sqrt[]{\cA(\mu; \rho_t,\bj_t)}\di t:(\rho,\bj)\in\CE_T(\rho_0,\rho_T)\right\}.
\]
\end{lemma}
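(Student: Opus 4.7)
The plan is to prove the two inequalities separately, exploiting the scaling $\cA(\mu;\rho,c\bj)=c^2\cA(\mu;\rho,\bj)$ for $c\geq 0$ (a direct consequence of the formula for $\alpha$ in~\eqref{eq:def:alpha}), combined with Cauchy--Schwarz in one direction and an arclength-type time reparametrization in the other. Denote provisionally by $\widetilde\cT(\rho_0,\rho_T)$ the right-hand side of the identity to be proved.

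The preliminary observation is that $\int_0^T\sqrt{\cA(\mu;\rho_t,\bj_t)}\di t$ is invariant under monotone absolutely continuous time reparametrization. Specifically, if $\sigma\colon[0,S]\to[0,T]$ is absolutely continuous and nondecreasing with $\sigma(0)=0$, $\sigma(S)=T$, then $(\hat\rho_s,\hat\bj_s):=(\rho_{\sigma(s)},\sigma'(s)\bj_{\sigma(s)})$ lies in $\CE_S$ with the same endpoints; this is checked by substituting test functions of the form $\tilde\varphi_s(x)=\varphi_{\sigma(s)}(x)$ into the weak form~\eqref{eq:nce-weak} and using the chain rule. Using the scaling identity, $\sqrt{\cA(\mu;\hat\rho_s,\hat\bj_s)}=\sigma'(s)\sqrt{\cA(\mu;\rho_{\sigma(s)},\bj_{\sigma(s)})}$, and the change of variables $t=\sigma(s)$ yields the invariance.

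For $\widetilde\cT\leq\cT_\mu$, start with any $(\rho,\bj)\in\CE(\rho_0,\rho_T)$ and apply Cauchy--Schwarz on $[0,1]$ to obtain $\int_0^1\sqrt{\cA(\mu;\rho_t,\bj_t)}\di t\leq\bigl(\int_0^1\cA(\mu;\rho_t,\bj_t)\di t\bigr)^{1/2}$. Rescaling the path linearly to $[0,T]$ and invoking the invariance above, this yields $\widetilde\cT(\rho_0,\rho_T)\leq\cT_\mu(\rho_0,\rho_T)$ after passing to the infimum.

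For the reverse inequality, fix $(\rho,\bj)\in\CE_T(\rho_0,\rho_T)$ and set $L:=\int_0^T\sqrt{\cA(\mu;\rho_t,\bj_t)}\di t$; we may assume $L<\infty$. If $L=0$, then $\cA(\mu;\rho_t,\bj_t)=0$ for a.e.\ $t$, which forces $\bj_t\equiv 0$ (via the definition of $\alpha$ applied on $\supp(\rho\otimes\mu+\mu\otimes\rho)$ and the flux representation of Lemma~\ref{lem:action}), hence $\rho_0=\rho_T$ by the continuity equation and the constant path gives $\cT_\mu(\rho_0,\rho_T)=0$. If $L>0$, the strategy is to reparametrize $(\rho,\bj)$ onto $[0,1]$ so that $\sqrt{\cA}$ becomes constant and equal to $L$. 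The main obstacle is that $\sqrt{\cA(\mu;\rho_t,\bj_t)}$ may vanish on sets of positive measure, preventing a direct inversion of $\ell(t):=\int_0^t\sqrt{\cA(\mu;\rho_r,\bj_r)}\di r$. The standard remedy is to regularize by $\ell_\eps(t):=\ell(t)+\eps t$, a strictly increasing bi-Lipschitz map with inverse $\sigma_\eps$, and to set $\tilde\sigma_\eps(u):=\sigma_\eps((L+\eps T)u)$ for $u\in[0,1]$. The reparametrized path $(\hat\rho^\eps,\hat\bj^\eps)\in\CE(\rho_0,\rho_T)$ then satisfies, by a routine change of variables,
\[
\int_0^1\cA(\mu;\hat\rho^\eps_u,\hat\bj^\eps_u)\di u \;=\; (L+\eps T)\int_0^T\frac{\cA(\mu;\rho_t,\bj_t)}{\sqrt{\cA(\mu;\rho_t,\bj_t)}+\eps}\di t,
\]
which converges to $L^2$ as $\eps\to 0$ by dominated convergence (with integrable majorant $\sqrt{\cA(\mu;\rho_t,\bj_t)}$). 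Hence $\cT_\mu(\rho_0,\rho_T)\leq L$, and taking the infimum over $(\rho,\bj)\in\CE_T(\rho_0,\rho_T)$ completes the proof.
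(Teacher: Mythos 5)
Your proof is correct, and it reconstructs exactly the argument the paper delegates: the paper gives no proof of Lemma \ref{lem:reparametrization}, appealing to the one-homogeneity of $\alpha$ and citing \cite[Theorem 5.4]{DNS09}, and your two-sided scheme (Cauchy--Schwarz plus linear rescaling for $\widetilde\cT\le\cT_\mu$, and the $\eps$-regularized arclength map $\ell_\eps(t)=\ell(t)+\eps t$ with dominated convergence for $\cT_\mu\le\widetilde\cT$) is precisely that standard argument. Two cosmetic remarks: $\ell_\eps$ is in general not bi-Lipschitz since $\sqrt{\cA(\mu;\rho_t,\bj_t)}$ is only integrable --- what you actually use, and what is true, is that $\ell_\eps$ is absolutely continuous and strictly increasing with Lipschitz inverse $\sigma_\eps$ (constant $1/\eps$), which suffices for the change of variables; and the check that the reparametrized pair remains in $\CE$ is cleanest via the time-independent test-function identity \eqref{eq:CE:weak:t01} rather than by composing time-dependent test functions with the merely absolutely continuous $\sigma$, since $\varphi_{\sigma(\cdot)}$ need not be smooth in time.
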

Now, as consequence of the above reparametrization and Jensen's inequality, we have the following result, which implies that the infimum is in fact a minimum; see \cite[Proposition 4.3]{Erb14}.
\begin{proposition}\label{prop:min-metric}
For any $\mu \in \cM^+(\R^d)$ satisfying Assumptions~\ref{it:as-conv} and~\ref{it:as-tight}, and any $\rho_0,\rho_1\in\mP_2(\Rd)$ such that $\mathcal{T}_\mu(\rho_0,\rho_1)<\infty$, the infimum in \eqref{eq:nonloc-upwind-transp-cost} is attained by a curve $(\rho,\bj)\in\CE(\rho_0,\rho_1)$ so that $\cA(\rho_t,\bj_t)=\mathcal{T}_\mu(\rho_0,\rho_1)^2$ for a.e.\ $t\in[0,1]$. Such curve is a constant-speed geodesic for $\cT_\mu$, i.e.,
\[
\cT_\mu(\rho_s,\rho_t)=|t-s|\cT_\mu(\rho_0,\rho_1), \quad \mbox{for all $s,t\in[0,1]$}.
\]
\end{proposition}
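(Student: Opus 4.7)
\medskip
\noindent\textbf{Proof plan.}
The plan is to proceed in three steps: first establish existence of a minimizer by a direct method using the compactness result of Proposition~\ref{prop:compactness-sol-ce}, then upgrade the minimizer to one with constant action via the reparametrization lemma and Jensen's inequality, and finally deduce the constant-speed geodesic property by a scaling argument combined with the triangle inequality for $\cT_\mu$.

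\medskip
\noindent\textbf{Step 1 (existence of a minimizer).}
Pick a minimizing sequence $(\rho^n,\bj^n)\in\CE(\rho_0,\rho_1)$, i.e., $\int_0^1\cA(\mu;\rho^n_t,\bj^n_t)\di t \to \cT_\mu(\rho_0,\rho_1)^2 < \infty$. Taking $\mu^n\equiv\mu$, the hypotheses of Proposition~\ref{prop:compactness-sol-ce} are met: Assumptions~\ref{it:as-conv} and~\ref{it:as-tight} hold uniformly, the initial second moments are uniformly bounded (they equal $M_2(\rho_0)<\infty$), and the actions are uniformly bounded by construction. Extracting a subsequence yields a limit $(\rho,\bj)\in\CE_1$ with $\rho(0)=\rho_0$ and $\rho(1)=\rho_1$ (the endpoint conditions pass to the limit because $\rho^n_t\rightharpoonup\rho_t$ for every $t\in[0,1]$). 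The semicontinuity statement of Proposition~\ref{prop:compactness-sol-ce} gives
\begin{equation*}
  \int_0^1\cA(\mu;\rho_t,\bj_t)\di t \le \liminf_{n\to\infty}\int_0^1\cA(\mu;\rho^n_t,\bj^n_t)\di t = \cT_\mu(\rho_0,\rho_1)^2,
\end{equation*}
so $(\rho,\bj)$ is a minimizer.

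\medskip
\noindent\textbf{Step 2 (constant action).}
For any $(\tilde\rho,\tilde\bj)\in\CE(\rho_0,\rho_1)$, the Cauchy--Schwarz inequality gives
\begin{equation*}
  \left(\int_0^1\sqrt{\cA(\mu;\tilde\rho_t,\tilde\bj_t)}\di t\right)^{\!2} \le \int_0^1\cA(\mu;\tilde\rho_t,\tilde\bj_t)\di t.
\end{equation*}
Combining this with Lemma~\ref{lem:reparametrization} (which identifies $\cT_\mu(\rho_0,\rho_1)$ with the infimum of the left-hand side) forces equality along any minimizer $(\rho,\bj)$ from Step 1. Equality in Cauchy--Schwarz then implies that $t\mapsto\sqrt{\cA(\mu;\rho_t,\bj_t)}$ is constant almost everywhere, hence $\cA(\mu;\rho_t,\bj_t)=\cT_\mu(\rho_0,\rho_1)^2$ for a.e.\ $t\in[0,1]$.

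\medskip
\noindent\textbf{Step 3 (constant-speed geodesic).}
Fix $0\le s<t\le 1$ and define the rescaled curve $\tilde\rho_\tau:=\rho_{s+\tau(t-s)}$, $\tilde\bj_\tau:=(t-s)\bj_{s+\tau(t-s)}$ for $\tau\in[0,1]$. A change of variables in the weak form shows $(\tilde\rho,\tilde\bj)\in\CE(\rho_s,\rho_t)$, and by the $2$-homogeneity of $\cA$ in $\bj$ (visible from~\eqref{eq:def-action} or~\eqref{eq:action:v}) together with Step 2,
\begin{equation*}
 \cT_\mu(\rho_s,\rho_t)^2 \le \int_0^1\cA(\mu;\tilde\rho_\tau,\tilde\bj_\tau)\di\tau = (t-s)\int_s^t\cA(\mu;\rho_r,\bj_r)\di r = (t-s)^2\,\cT_\mu(\rho_0,\rho_1)^2.
\end{equation*}
Conversely, by concatenating optimal paths on $[0,s]$, $[s,t]$, $[t,1]$ (after reparametrization) one establishes the triangle inequality $\cT_\mu(\rho_0,\rho_1)\le \cT_\mu(\rho_0,\rho_s)+\cT_\mu(\rho_s,\rho_t)+\cT_\mu(\rho_t,\rho_1)$; combining it with the upper bound just proved applied to the sub-intervals $[0,s]$ and $[t,1]$ gives $\cT_\mu(\rho_s,\rho_t)\ge(t-s)\cT_\mu(\rho_0,\rho_1)$, yielding the desired equality.

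\medskip
\noindent\textbf{Main obstacle.}
The delicate point is the scaling argument in Step 3: since the flux enters the action in a nonlinear, positively homogeneous but not quadratic way (through the upwind decomposition), one must verify carefully that the rescaled pair $(\tilde\rho,\tilde\bj)$ remains a weak solution of the nonlocal continuity equation and that $\cA$ transforms as claimed. The homogeneity of $\alpha$ together with the formula~\eqref{eq:action:v} makes this computation transparent, but care is needed to ensure the integrability condition~\eqref{eq:integrability-cond} is preserved under the time change so that the weak formulation~\eqref{eq:nce-weak} remains valid.
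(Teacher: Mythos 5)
Your proposal is correct and follows essentially the route the paper intends: existence of a minimizer via the compactness and lower semicontinuity of Proposition~\ref{prop:compactness-sol-ce}, constant action via Lemma~\ref{lem:reparametrization} combined with Jensen/Cauchy--Schwarz, and the constant-speed property via the $2$-homogeneity of $\cA$ in $\bj$ plus the triangle inequality (the paper itself only sketches this, referring to Erbar's argument). The only remark worth adding is that, since $\cT_\mu$ is a quasi-metric, the geodesic identity is to be read in the directed sense $s\le t$, which is exactly the case you treat.
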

The next proposition establishes a link between $\mathcal{T}_\mu$ and the $W_1$-distance. 
\begin{proposition}[Comparison with $W_1$]\label{prop:comparison_with_W_1}
Let $\mu\in \cM^+(\R^d)$ satisfy~\ref{it:as-conv} for some $C_\eta>0$ (depending only on $\mu$ and $\eta$). Then for any $\rho^0,\rho^1\in\mP_2(\Rd)$ it holds
\[
W_1(\rho^0,\rho^1)\le \sqrt{2C_\eta}\, \sqrt{\mathcal{T}(\rho^0,\rho^1)}.
\]
\end{proposition}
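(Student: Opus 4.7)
The natural route is Kantorovich--Rubinstein duality combined with the action/total-variation control from Lemma~\ref{lem:A:TV:bound}. First I would write
\[
  W_1(\rho^0,\rho^1)=\sup\set*{\int_\Rd \phi\,\di(\rho^1-\rho^0) : \mathrm{Lip}(\phi)\le 1},
\]
fix a $1$-Lipschitz $\phi$ and an arbitrary admissible path $(\rho,\bj)\in\CE(\rho^0,\rho^1)$ with $\int_0^1\cA(\mu;\rho_t,\bj_t)\di t<\infty$ (otherwise the bound is trivial). Using a smooth spatial cut-off $\chi_R$ with $\chi_R=1$ on $B_R$, $\|\nabla\chi_R\|_\infty\le 2/R$, I would test \eqref{eq:CE:weak:t01} with $\varphi_R(t,x)=\chi_R(x)\phi(x)$ and let $R\to\infty$, the limit being justified by the uniform second moment bound from Lemma~\ref{lem:CE:tightness} and the flux integrability coming from Corollary~\ref{cor:boundwithA}. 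This yields
\[
  \int_\Rd\phi\,\di(\rho^1-\rho^0)=\frac12\int_0^1\iint_G\dgrad\phi(x,y)\,\eta(x,y)\,\di\bj_t(x,y)\di t .
\]

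For the second step I would use $|\dgrad\phi(x,y)|=|\phi(y)-\phi(x)|\le|x-y|$ and apply Lemma~\ref{lem:A:TV:bound} with $\Phi(x,y)=|x-y|$ to bound, for a.e.\ $t\in[0,1]$,
\[
  \frac12\iint_G|x-y|\,\eta(x,y)\,\di|\bj_t|(x,y)
  \le \sqrt{\cA(\mu;\rho_t,\bj_t)}\left(\iint_G|x-y|^2\,\eta\,\di(\gamma_{1,t}+\gamma_{2,t})\right)^{1/2}.
\]
Since $\gamma_{1,t}=\rho_t\otimes\mu$ and $\gamma_{2,t}=\mu\otimes\rho_t$, the symmetry of $\eta$ together with Assumption~\ref{it:as-conv} gives
\[
  \iint_G |x-y|^2\,\eta(x,y)\,\di(\gamma_{1,t}+\gamma_{2,t})
  =2\int_\Rd\!\!\int_\Rd|x-y|^2\eta(x,y)\di\mu(y)\di\rho_t(x)\le 2C_\eta.
\]

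Combining the two estimates and integrating in $t$,
\[
  \left|\int_\Rd\phi\,\di(\rho^1-\rho^0)\right|
  \le\sqrt{2C_\eta}\int_0^1\sqrt{\cA(\mu;\rho_t,\bj_t)}\di t .
\]
Now I would take the infimum over paths $(\rho,\bj)\in\CE(\rho^0,\rho^1)$: by the reparametrization identity of Lemma~\ref{lem:reparametrization}, that infimum equals $\cT_\mu(\rho^0,\rho^1)$. Taking the supremum over $1$-Lipschitz $\phi$ completes the argument.

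The main technical obstacle is the passage from the compactly supported test functions admissible in \eqref{eq:nce-weak}/\eqref{eq:CE:weak:t01} to the possibly unbounded $1$-Lipschitz test functions needed for the Kantorovich--Rubinstein representation of $W_1$. Handling this cleanly requires the spatial cut-off argument together with the uniform second moment bound from Lemma~\ref{lem:CE:tightness}, so that both the $\partial_t\phi$-type and the $\dgrad\phi$-type contributions from the cut-off vanish in the limit $R\to\infty$; every other step is a direct chaining of the lemmas already established in Section~\ref{subsec:action}.
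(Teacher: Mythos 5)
Your proposal is correct and follows essentially the same route as the paper: Kantorovich--Rubinstein duality, extension of $1$-Lipschitz test functions by regularization/truncation (justified via Lemma~\ref{lem:CE:tightness} and Corollary~\ref{cor:boundwithA}), the Cauchy--Schwarz-type bound of Lemma~\ref{lem:A:TV:bound} with $\Phi(x,y)=|x-y|$, and Assumption~\ref{it:as-conv}, differing only in that you apply the estimate pointwise in time and invoke Lemma~\ref{lem:reparametrization} (which could equally be replaced by Jensen's inequality in time, avoiding the extra hypothesis~\ref{it:as-tight} of that lemma), whereas the paper applies Cauchy--Schwarz jointly in time and space and uses the definition of $\cT_\mu$ directly. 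Note that both your argument and the paper's own proof actually yield the bound $W_1(\rho^0,\rho^1)\le \sqrt{2C_\eta}\,\cT_\mu(\rho^0,\rho^1)$, i.e.\ linear in $\cT_\mu$, consistent with the displayed computation in the paper rather than with the $\sqrt{\cT}$ appearing in the statement.
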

\begin{proof}
  By a standard regularization argument and the truncation procedure as in the proof of Lemma~\ref{lem:CE:tightness}, we can actually consider any $1$-Lipschitz function $\psi$ as a test function in the weak formulation~\eqref{eq:nce-weak} for some $(\rho,\bj)\in \CE(\rho^0,\rho^1)$. Then we can estimate, by Lemma~\ref{lem:A:TV:bound} and Assumption~\ref{it:as-conv},
  \begin{align*}
    \abs*{ \int_\Rd \psi \di\rho^1 - \int_\Rd \psi\di\rho^0} &= \abs*{ \frac12\int_0^1 \iint_G \dgrad \psi\, \eta \di\bj_t \di t}
    \leq \frac12\int_0^1 \iint_G \abs{x-y} \, \eta(x,y) \di\abs{\bj_t}(x,y) \di t \\
    &\leq \bra*{ \int_0^1 \cA(\rho_t, \bj_t) \di t}^{\frac{1}{2}} \bra*{ \int_0^1\iint_G \abs{x-y}^2 \eta(x,y) \bigl(\di\gamma_1+\di\gamma_2\bigr)}^{\frac{1}{2}}\\
    &\leq\bra*{ \int_0^1 \cA(\rho_t, \bj_t) \di t}^{\frac{1}{2}}\bra*{2 \int_0^1\iint_G\left( \abs{x-y}^2\vee\abs{x-y}^4\right) \eta(x,y)\di\mu(y)\di\rho_t(x)}^{\frac{1}{2}}\\
    &\leq\sqrt{2C_\eta}\bra*{ \int_0^1 \cA(\rho_t, \bj_t) \di t}^{\frac{1}{2}}.
  \end{align*}
Taking the supremum over all $1$-Lipschitz functions and the infimum in the couplings $(\rho,\bj)\in \CE(\rho^0,\rho^1)$ gives the result.
\end{proof}
The results above show that $\mt_\mu$ is an extended (meaning that it can take value $\infty$) quasi-metric on the set of probability measures which induces a topology stronger than the $W_1$-topology:
\begin{theorem}\label{thm:quasi-metric}
Let $\mu \in \cM^+(\R^d)$ satisfy Assumptions~\ref{it:as-conv} and~\ref{it:as-tight}. The nonlocal upwind transportation cost $\mathcal{T}_\mu$ defines an extended quasi-metric on $\mP_2(\Rd)$. The map $(\rho_0,\rho_1)\mapsto\mathcal{T}_\mu(\rho_0,\rho_1)$ is lower semicontinuous with respect to the narrow convergence. The topology induced by $\mathcal{T}_\mu$ is stronger than the $W_1$-topology and the narrow topology. In particular, bounded sets are narrowly relatively compact in $(\mP_2(\Rd),\mt_\mu)$.
\end{theorem}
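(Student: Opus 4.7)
The plan is to verify each of the four assertions separately, leaning on the machinery already in place: joint lower semicontinuity (Lemma \ref{lem:l.s.c.action}) and convexity (Lemma \ref{lem:convexity}) of the action, compactness of bounded-action paths (Proposition \ref{prop:compactness-sol-ce}), existence of minimizing geodesics (Proposition \ref{prop:min-metric}), and the comparison with $W_1$ (Proposition \ref{prop:comparison_with_W_1}).

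For the extended quasi-metric axioms, non-negativity is immediate, and $\mt_\mu(\rho,\rho)=0$ follows by choosing the stationary curve $\rho_t\equiv \rho$, $\bj_t\equiv 0$, which lies in $\CE(\rho,\rho)$ with zero action. If $\mt_\mu(\rho_0,\rho_1)=0$, Proposition \ref{prop:comparison_with_W_1} forces $W_1(\rho_0,\rho_1)=0$ and hence $\rho_0=\rho_1$. For the triangle inequality, given $\rho_0,\rho_1,\rho_2$ with finite costs and $\varepsilon>0$, I would pick $\varepsilon$-minimizers on $[0,1]$ for $\mt_\mu(\rho_0,\rho_1)$ and $\mt_\mu(\rho_1,\rho_2)$, pass to the $\sqrt{\cA}$-length form of Lemma \ref{lem:reparametrization}, rescale time to $[0,s]$ and $[s,1]$ with $s$ proportional to the first distance, and concatenate. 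The glued curve lies in $\CE(\rho_0,\rho_2)$ and its length is the sum of the two lengths up to $O(\varepsilon)$. Asymmetry of $\mt_\mu$ is intrinsic to the upwind action through the $(\cdot)_+$ in Definition \ref{def:action}, which is why only a quasi-metric can be expected.

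For the narrow lower semicontinuity, suppose $\rho_0^n \rightharpoonup \rho_0$ and $\rho_1^n \rightharpoonup \rho_1$ with uniformly bounded second moments, and set $L := \liminf_n \mt_\mu(\rho_0^n,\rho_1^n)$; the case $L=\infty$ is trivial, so pass to a subsequence realizing $L<\infty$. Proposition \ref{prop:min-metric} supplies geodesics $(\rho^n,\bj^n) \in \CE(\rho_0^n,\rho_1^n)$ with $\int_0^1 \cA(\mu;\rho_t^n,\bj_t^n)\di t \to L^2$. Applying Proposition \ref{prop:compactness-sol-ce} with $\mu^n\equiv\mu$ produces a subsequential limit $(\rho,\bj)\in\CE$ satisfying $\rho_t^n\rightharpoonup \rho_t$ for every $t\in[0,1]$, in particular $\rho(0)=\rho_0$ and $\rho(1)=\rho_1$, together with
\[
\mt_\mu(\rho_0,\rho_1)^2 \le \int_0^1 \cA(\mu;\rho_t,\bj_t)\di t \le \liminf_n \int_0^1 \cA(\mu;\rho_t^n,\bj_t^n)\di t = L^2.
\]

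For the topology claims, Proposition \ref{prop:comparison_with_W_1} gives $W_1(\rho^n,\rho)\le \sqrt{2C_\eta}\sqrt{\mt_\mu(\rho^n,\rho)}$, so $\mt_\mu$-convergence implies $W_1$-convergence, which in turn implies narrow convergence; hence $\mt_\mu$ induces a topology finer than both. Finally, if $(\rho^n)\subset\mP_2(\Rd)$ is $\mt_\mu$-bounded by $R$ around some reference $\bar\rho \in \mP_2(\Rd)$, Proposition \ref{prop:min-metric} yields geodesics $(\rho^{(n)},\bj^{(n)})\in\CE(\bar\rho,\rho^n)$ with action at most $R^2$; since $M_2(\bar\rho)<\infty$, Lemma \ref{lem:CE:tightness} produces a uniform second-moment bound along these paths, and Proposition \ref{prop:compactness-sol-ce} extracts a narrowly convergent subsequence whose value at $t=1$ lies in $\mP_2(\Rd)$, giving the desired relative compactness.

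The main obstacle is the triangle inequality: the concatenation/time-rescaling step must be performed carefully enough that the glued curve is genuinely a weak solution of the nonlocal continuity equation across the junction and that the loss $O(\varepsilon)$ can be sent to zero. All remaining ingredients are essentially direct corollaries of Propositions \ref{prop:comparison_with_W_1}, \ref{prop:min-metric}, and \ref{prop:compactness-sol-ce} combined with Lemmas \ref{lem:l.s.c.action} and \ref{lem:CE:tightness}.
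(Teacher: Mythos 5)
Your proposal is correct and follows essentially the same route as the paper: quasi-metric axioms via Lemma \ref{lem:reparametrization} and concatenation of solutions to the nonlocal continuity equation, lower semicontinuity and compactness inherited from the action through Proposition \ref{prop:compactness-sol-ce} (with Lemma \ref{lem:CE:tightness} for the moment bounds), and the topology comparison via Proposition \ref{prop:comparison_with_W_1}. The only cosmetic difference is that you deduce $\mt_\mu(\rho_0,\rho_1)=0\Rightarrow\rho_0=\rho_1$ from the $W_1$ bound, while the paper argues directly that zero action forces $\bj_t\equiv 0$ and invokes \eqref{eq:CE:weak:t01}; both are fine.
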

\begin{proof}
 If $\mathcal{T}_\mu(\rho_0,\rho_1)=0$, then $\cA(\mu;\rho_t,\bj_t)=0$ for a.e.\ $t\in [0,1]$. Hence $\bj_t \equiv 0$ $\gamma_t$-a.e., which implies that $\rho_0\equiv \rho_1$ by the nonlocal continuity equation \eqref{eq:CE:weak:t01}. The triangle inequality is a consequence of Lemma~\ref{lem:reparametrization} and the fact that solutions to the nonlocal continuity equation can be concatenated. The lower semicontinuity and compactness properties of $\mathcal{T}_\mu$ are inherited from the action functional $\cA$ via Proposition \ref{prop:compactness-sol-ce}. 
In view of the comparison with $W_1$ from Proposition~\ref{prop:comparison_with_W_1}, we have that the topology induced by $\mathcal{T}_\mu$ is stronger than that induced by $W_1$ and the narrow topology.
\end{proof}
The next lemma provides a quantitative illustration of asymmetry of $\mathcal{T}$.
\begin{lemma}[Two-point space]\label{lem:two-point-space}
Let us consider the two-point graph $\Omega:=\set{0,1}$, with $\eta(0,1)=\eta(1,0)=\alpha>0$, $\mu(0)=p>0$ and $\mu(1)=q>0$. Let $\rho,\nu\in \mP_2(\Omega)$ and let $\rho_0, \rho_1, \nu_0, \nu_1 \in [0,1]$ be such that $\rho=\rho_0\delta_0+\rho_1\delta_1$ and $\nu=\nu_0\delta_0+\nu_1\delta_1$. There holds
\begin{equation}\label{eq:dits-two-point-space}
    \mathcal{T}(\rho,\nu)= 
    \begin{cases}
      \frac{2}{\sqrt{\alpha p}} \bra*{ \sqrt{\rho_1}-\sqrt{\nu_1}} &\text{ if } \rho_0 < \nu_0, \\
      \frac{2}{\sqrt{\alpha q}} \bra*{ \sqrt{\rho_0}-\sqrt{\nu_0}} &\text{ if } \nu_0 < \rho_0.
    \end{cases}
\end{equation}
\end{lemma}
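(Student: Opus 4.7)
Since $\Omega=\{0,1\}$ is finite, everything reduces to a one-dimensional ODE. By Corollary~\ref{cor:antsym_vect_field_lower_action} I may restrict to antisymmetric fluxes, parametrised by a scalar velocity $v_t:=v_t(0,1)=-v_t(1,0)$. From \eqref{eq:action:vas} and the divergence formula \eqref{eq:nl_divas}, the action and the continuity equation read
\begin{align*}
\cA(\rho_t,\bj_t) &= \alpha\bigl[(v_t)_+^2\, q\,\rho_0(t) + (v_t)_-^2\, p\,\rho_1(t)\bigr],\\
\dot{\rho}_1(t) &= \alpha\bigl[(v_t)_+ q\,\rho_0(t) - (v_t)_- p\,\rho_1(t)\bigr].
\end{align*}
Setting $u(t):=\rho_1(t)$ and $\rho_0(t)=1-u(t)$, the continuity equation forces $\mathrm{sign}(v_t)=\mathrm{sign}(\dot u(t))$; inverting to express $v_t$ in terms of $\dot u(t)$ reduces the problem to the one-dimensional Finsler-type Lagrangian
\[
\cA(\rho_t,\bj_t) = \frac{((\dot u)_+)^2}{\alpha q(1-u)} + \frac{((\dot u)_-)^2}{\alpha p\,u}.
\]

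By Lemma~\ref{lem:reparametrization}, $\cT_\mu(\rho,\nu)$ is the infimum, over absolutely continuous curves $u\colon[0,T]\to[0,1]$ with prescribed endpoints, of
\[
\int_0^T \sqrt{\cA}\,dt = \int_0^T\!\left[\frac{(\dot u)_+}{\sqrt{\alpha q(1-u)}} + \frac{(\dot u)_-}{\sqrt{\alpha p\,u}}\right]\! dt.
\]
Consider first $\rho_0<\nu_0$, so $u$ goes from $u(0)=\rho_1$ to $u(T)=\nu_1<\rho_1$. Pushing the measures $\chi_{\{\dot u>0\}}\,dt$ and $\chi_{\{\dot u<0\}}\,dt$ forward by $u$ produces $\mu_+,\mu_-\in\cM^+([0,1])$ whose difference satisfies $\mu_- - \mu_+ = \chi_{[\nu_1,\rho_1]}\,ds$ by the area formula; in particular $\mu_-\geq \chi_{[\nu_1,\rho_1]}\,ds$. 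Dropping the nonnegative $\mu_+$-contribution yields
\[
\int_0^T\sqrt{\cA}\,dt\ \geq\ \int\frac{d\mu_-(s)}{\sqrt{\alpha p\, s}}\ \geq\ \int_{\nu_1}^{\rho_1}\frac{ds}{\sqrt{\alpha p\, s}} = \frac{2}{\sqrt{\alpha p}}\bigl(\sqrt{\rho_1}-\sqrt{\nu_1}\bigr).
\]

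The matching upper bound is attained by the explicit monotone candidate $u(t):=\bigl(\sqrt{\rho_1}+t(\sqrt{\nu_1}-\sqrt{\rho_1})\bigr)^2$ on $[0,1]$: here $\sqrt{u}$ is affine and $\dot u\leq 0$, so $\dot u^2/u = 4(\sqrt{\rho_1}-\sqrt{\nu_1})^2$ is constant in $t$ and the action integrates to $4(\sqrt{\rho_1}-\sqrt{\nu_1})^2/(\alpha p)$, whose square root gives the claimed value. The symmetric case $\nu_0<\rho_0$ is handled by interchanging the two vertices (and hence $p\leftrightarrow q$) in the argument above.

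The main obstacle is justifying the area-formula step, which requires $u$ to be absolutely continuous. Applying Lemma~\ref{lem:A:TV:bound} in our discrete setting with $\Phi\equiv 1$ yields the pointwise bound $|j_{01}(t)|\leq C\sqrt{\cA(\rho_t,\bj_t)}$ for a constant $C$ depending only on $\alpha, p, q$; since $\dot u(t) = \alpha j_{01}(t)$ and $\cA$ is integrable in $t$ (being used to define $\cT_\mu$), Cauchy--Schwarz then gives $\dot u \in L^1([0,T])$, so $u\in \AC([0,T])$. A minor boundary degeneracy appears when $\nu_1 = 0$: along the proposed candidate $\dot u^2/u=4(\sqrt{\rho_1}-\sqrt{\nu_1})^2$ stays bounded up to $t=1$, so the candidate action is still finite and the identification goes through.
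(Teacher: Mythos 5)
Your proof is correct, and the core reduction coincides with the paper's: the paper parametrizes curves by $g_t=\rho_t(\{0\})$ while you use $u_t=\rho_t(\{1\})=1-g_t$, the resulting scalar action is the same, and your optimizer $u(t)=\bigl(\sqrt{\rho_1}+t(\sqrt{\nu_1}-\sqrt{\rho_1})\bigr)^2$ is exactly the paper's geodesic $g_t=1-\bigl(\sqrt{\rho_1}(1-t)+\sqrt{\nu_1}\,t\bigr)^2$. Where you genuinely differ is the lower bound: the paper stays with the squared-action (Benamou--Brenier) form on $[0,1]$, first restricts to monotone $g$ with the remark that backtracking only increases the action, and then applies Jensen's inequality to $\int_0^1\bigl|\tfrac{\di}{\di t}\bigl(-2\sqrt{1-g_t}\bigr)\bigr|^2\di t$; you instead pass to the length form via Lemma~\ref{lem:reparametrization}, drop the $(\dot u)_+$ term pointwise, and use a change-of-variables/pushforward argument that treats non-monotone competitors directly. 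Your route is slightly longer but dispenses with the (easy, but unproved) monotone-restriction step, and it makes explicit, via Lemma~\ref{lem:A:TV:bound}, why $u$ is absolutely continuous so that the chain rule applies --- a point the paper passes over. Two small things to tighten: (i) the measures you push forward should be $(\dot u)_+\,\di t$ and $(\dot u)_-\,\di t$ rather than $\chi_{\{\dot u>0\}}\,\di t$ and $\chi_{\{\dot u<0\}}\,\di t$; with $\mu_\pm:=u_{\#}\bigl((\dot u)_\pm\,\di t\bigr)$ the identity $\mu_--\mu_+=\chi_{[\nu_1,\rho_1]}\,\di s$ is precisely the chain rule $\int_0^T\phi(u_t)\,\dot u_t\,\di t=\int_{u(0)}^{u(T)}\phi(s)\,\di s$ for absolutely continuous $u$, and these weighted pushforwards are what your displayed inequalities actually use; (ii) your first displayed identity $\sqrt{\cA}=\frac{(\dot u)_+}{\sqrt{\alpha q(1-u)}}+\frac{(\dot u)_-}{\sqrt{\alpha p\,u}}$ holds because $(\dot u)_+(\dot u)_-=0$ pointwise, which deserves a word. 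Neither point is a gap in substance.
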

\begin{proof}
Let us fix $\lambda=\delta_{(0,1)}+\delta_{(1,0)}$ and notice that $\rho_0+\rho_1=1$ and $\nu_0+\nu_1=1$ as $\rho,\nu$ are probability measures. Since $\Omega=\{0,1\}$, note that for any curve $t\in[0,1]\mapsto \rho_t\in \mP_2(\Omega)$ there exists a function $g\colon t\in [0,1]\mapsto g_t\in[0,1]$ accounting for the mass displacement. Thus, we notice that $(\rho,\bj)\in\CE(\rho,\nu)$ if
\begin{align*}
\rho_t=g_t\delta_0+(1-g_t)\delta_1,\quad \bj_t\ \text{ such that }\ \bj_t(0,1)=-\frac{\dot g_t}{\alpha}\ \text{ and }\ \bj_t(1,0)=\frac{\dot g_t}{\alpha}, \quad \mbox{for all $t\in[0,1]$}.  
\end{align*}
Hence, using that $\bj_t$ is antisymmetric yields
\begin{align*}
    \mathcal{T}(\rho_0,\rho_1)^2&=\inf\left\{\int_0^1\cA(\rho_t,\bj_t)\di t:(\rho,\bj)\in\CE(\rho_0,\rho_1)\right\}\\
    &=\inf_g\left\{\int_0^1\frac{|(\dot g_t)_-|^2}{\alpha g_t q}+\frac{|(\dot g_t)_+|^2}{\alpha(1-g_t)p}\di t : g_0 = \rho_0 \text{ and } g_1 = \nu_0 \right\}.
\end{align*}
Now, let us assume without loss of generality that $\rho_0 < \nu_0$. Obviously, in this configuration we can restrict the above infimum among non-decreasing $g$, as it gives a lower action. Therefore, by applying Jensen's inequality, we have
\begin{align*}
\mathcal{T}(\rho,\nu)^2&=\inf_{g\nearrow}\frac{1}{\alpha p}\int_0^1\frac{|\dot g_t|^2}{(1-g_t)}\di t = \inf_{g\nearrow}\frac{1}{\alpha p}\int_0^1\left|-2\frac{\di}{\di t} \left(\sqrt{1-g_t}\right)\right|^2\di t \\
&\geq \inf_{g\nearrow} \frac{4}{\alpha p} \abs*{ \int_0^1 -\frac{\di}{\di t}\left( \sqrt{1-g_t}\right)\di t}^2 = \frac{4}{\alpha p} \bra*{\sqrt{1-\rho_0} -\sqrt{1-\nu_0}}^2 =  \frac{4}{\alpha p} \bra*{\sqrt{\rho_1} -\sqrt{\nu_1}}^2  \,.
\end{align*}
The equality case is obtained by noting that the solution to $-\frac{\di}{\di t} \sqrt{1-g_t}=\sqrt{\rho_1}-\sqrt{\nu_1}$ for all $t\in[0,1]$, with consistent boundary values $g_0=\rho_0$ and $g_1=\nu_0$, is given by $g_t = 1-\bigl(\sqrt{\rho_1}(1-t)+\sqrt{\nu_1} t \bigr)^2$. The case $\nu_0<\rho_0$ is obtained in a similar manner, which gives formula \eqref{eq:dits-two-point-space}.
\end{proof}
\begin{rem}
 The quasi-metric is in general already non-symmetric on the two-point space, which one can best observe in Figure~\ref{fig:two-point}.
 In the case $p=\frac{1}{2}$, the swapping $\hat\rho_0 = \rho_1$ and $\hat\rho_1 = \rho_0$ preserves the quasi-distance $\mathcal{T}(\rho,\nu)= \mathcal{T}(\hat\rho,\hat{\nu})$.
  \begin{figure}[ht]
  \includegraphics[width=0.45\textwidth]{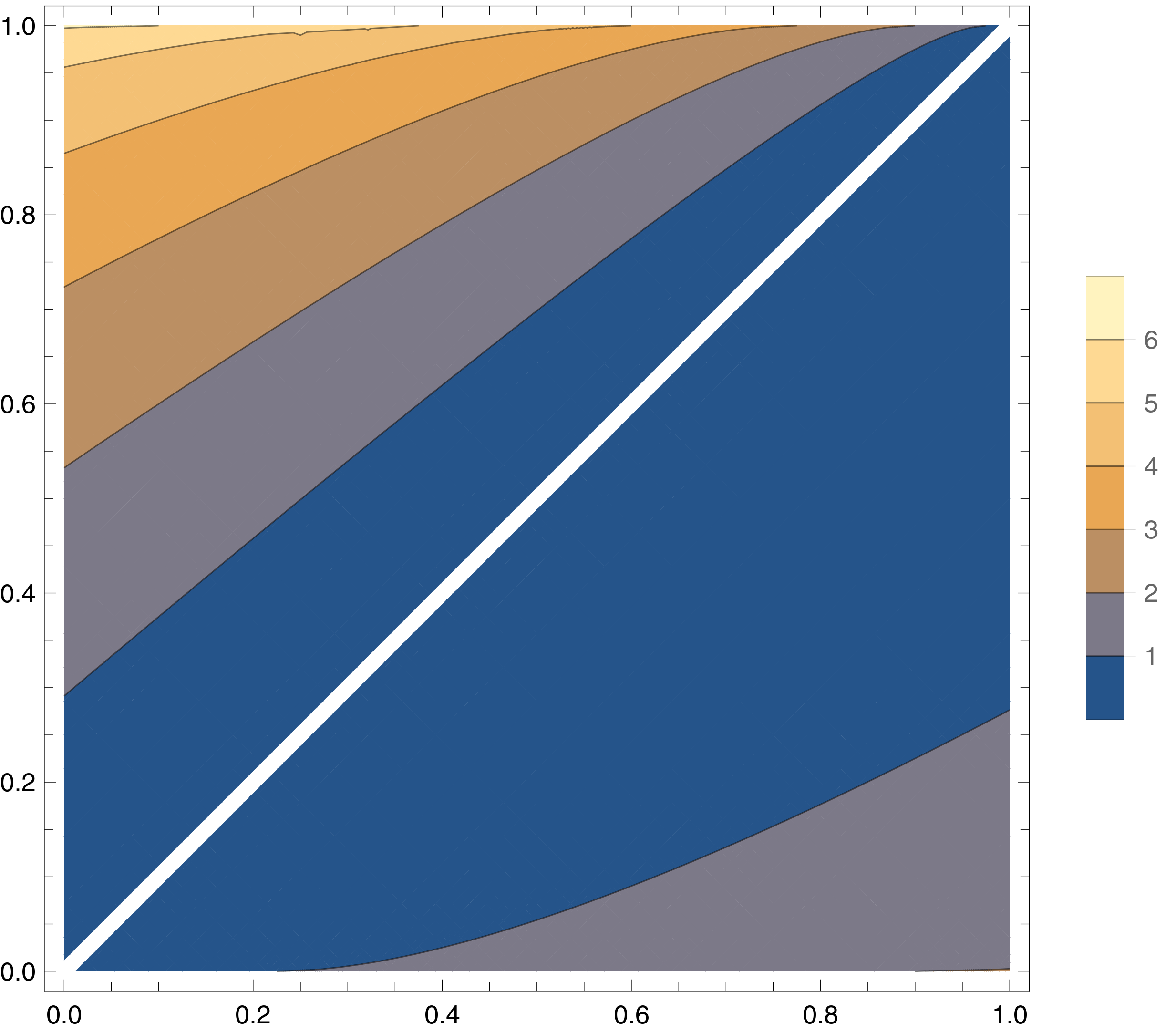}$\qquad$
  \includegraphics[width=0.45\textwidth]{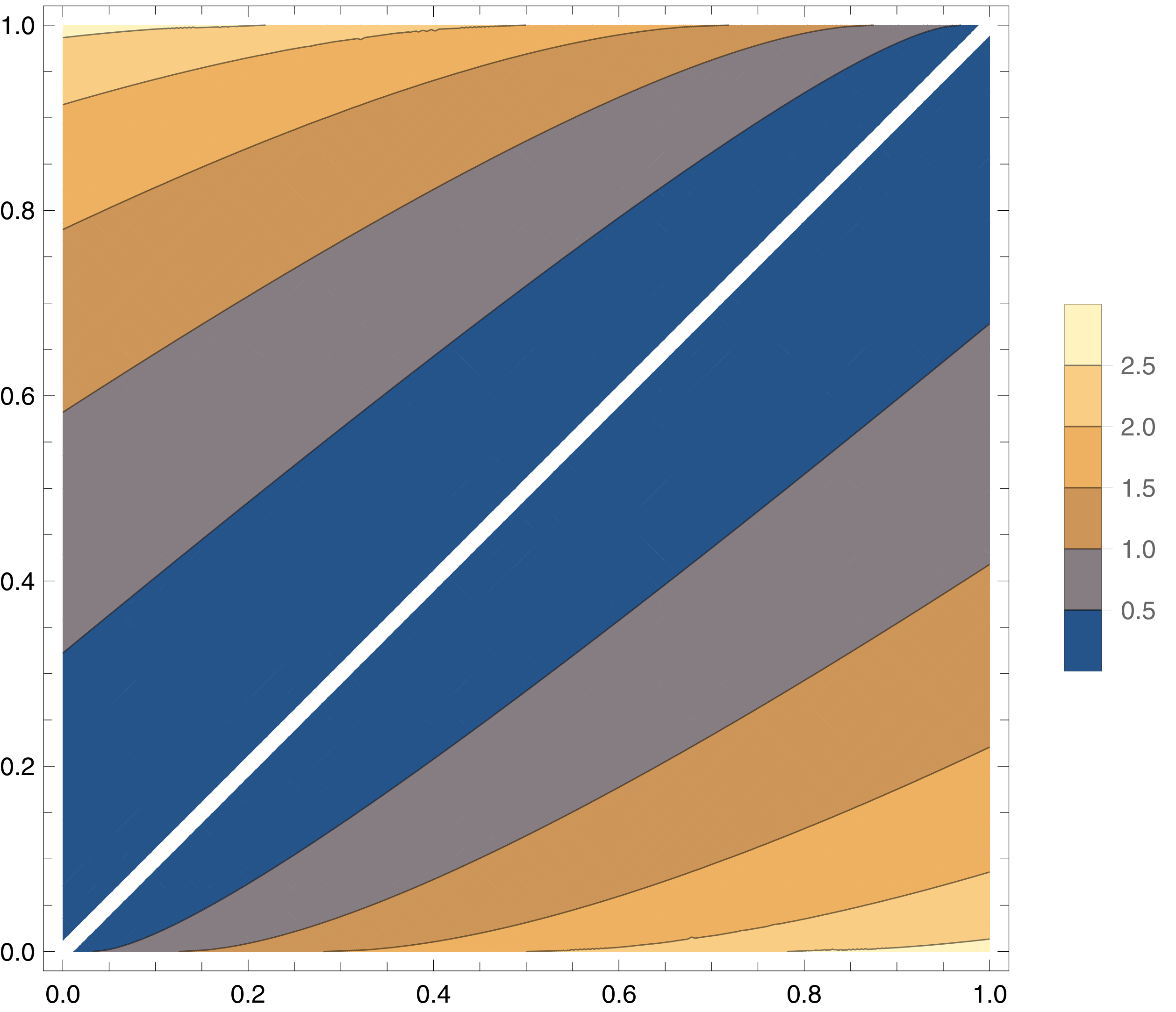}
  \caption{In the context of Lemma \ref{lem:two-point-space}, in the above figures we parametrize the quasi-distance $\mathcal{T}(\rho,\nu)$ by $\rho_0\in[0,1]$ and $\nu_0\in[0,1]$ for $\mu(0)=0.1$ (left) and $\mu(1)=0.5$ (right). Colors represent different values of $\mathcal{T}(\rho,\nu)$ with respect to the initial values $\rho_0$ and $\nu_0$. In the left figure, by swapping the values of $\rho_0$ and $\nu_0$ on the axes, we can see that $\mathcal{T}$ is non-symmetric.}\label{fig:two-point}
 \end{figure} 
\end{rem}
We now adapt the standard definition of absolutely continuous curves in metric spaces from \cite[Chapter 1]{AGS} to our setting. Let $\mu\in \cM^+(\Rd)$ satisfy Assumptions~\ref{it:as-conv} and~\ref{it:as-tight}. A curve $[0,T]\ni t\mapsto\rho_t\in\mP_2(\Rd)$ is said to be \emph{$2$-absolutely continuous} with respect to $\mathcal{T}_\mu$ if there exists $m\in L^2((0,T))$ such that
\begin{equation}\label{eq:abs-cont-cond}
\mathcal{T}_\mu(\rho_{t_0},\rho_{t_1})\le\int_{t_0}^{t_1}m(t)\di t\quad \mbox{for all $0< t_0\le t_1< T$}.
\end{equation}
In this case, we write $\rho\in\AC\bigl([0,T];(\mP_2(\Rd),\cT_\mu)\bigr)$. For any $\rho\in \AC\bigl([0,T];(\mP_2(\Rd),\cT_\mu)\bigr)$ the quantity
\begin{equation}\label{e:def:metric-derivative}
|\rho'_t|:=\lim_{h\to0}\frac{\mathcal{T}_\mu(\rho_t,\rho_{t+h})}{|h|}
\end{equation}
is well-defined for a.e.\ $t\in[0,T]$ and is called the \emph{metric derivative} of $\rho$ at $t$. Moreover, the function $t\to|\rho'|(t)$ belongs to $L^2((0,T))$ and it satisfies $|\rho'|(t)\le m(t)$ for a.e.\ $t\in[0,T]$, which means $\rho'$ is the minimal integrand satisfying \eqref{eq:abs-cont-cond}. The length of a curve $\rho\in \AC\bigl([0,T];(\mP_2(\Rd),\cT_\mu)\bigr)$ is defined by $L(\rho):=\int_0^T|\rho'|(t)\di t$.

\begin{proposition}[Metric velocity]\label{prop:metric-velocity}
Let $\mu\in \cM^+(\Rd)$ satisfy Assumptions~\ref{it:as-conv} and~\ref{it:as-tight}. A curve $(\rho_t)_{t\in[0,T]}\subset\mP_2(\Rd)$ belongs to $\AC([0,T];(\mP_2(\Rd),\cT_\mu))$ if and only if there exists a family $(\bj_t)_{t\in[0,T]}$ such that $(\rho,\bj)\in\CE_T$ and
\[
  \int_0^T\sqrt[]{\cA(\mu;\rho_t,\bj_t)}\di t<\infty.
\]
In this case, the metric derivative is bounded as in $|\rho'|^{2}(t)\le\cA(\mu;\rho_t,\bj_t)$ for a.e.\ $t\in[0,T]$. In addition, there exists a unique family $(\tilde{\bj}_t)_{t\in[0,T]}$ such that $(\rho,\tilde{\bj})\in\CE_T$ and
\begin{equation}\label{eq:metric-velocity:opt}
|\rho'|^2(t)=\cA(\mu;\rho_t,\tilde{\bj}_t)\qquad for\ a.e.\ t\in[0,T].
\end{equation}
Hereby, the previous identity holds if and only if $\tilde{\bj}_t\in T_{\rho}\mP_2(\Rd)$ for a.e.\ $t\in[0,T]$, where
\begin{equation}\label{eq:charact:tangent}
T_\rho\mP_2(\Rd)=\{\bj\in\cM_{\gamma_1}^{\mathrm{as}}(G):\cA(\mu;\rho,\bj)<\infty,\cA(\mu;\rho,\bj)\le\cA(\mu;\rho,\bj+\bm{d}) \text{ for all } \bm{d}\in\cM_{\Div}(G)\},
\end{equation}
with $\cM_{\gamma_1}^{\mathrm{as}}(G)$ defined in~\eqref{eq:def:Mas}, and $\cM_{\Div}(G)$ the set of nonlocal divergence-free fluxes, that is
\[
  \cM_{\Div}(G)=\left\{\bm{d}\in\cM(G):\iint_G\dgrad\psi\, \eta \di\bm{d}=0\ for\ all\ \psi\in C_\mathrm{c}^\infty(\Rd)\right\}.
\]
\end{proposition}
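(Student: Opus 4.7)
The plan is to prove the two implications of the equivalence separately, and then identify the optimal flux with the tangent space characterization.

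\textbf{The easy implication.} Suppose there exists a Borel family $(\bj_t)_{t\in[0,T]}$ such that $(\rho,\bj)\in\CE_T$ and $\int_0^T\sqrt{\cA(\mu;\rho_t,\bj_t)}\di t<\infty$. For any $0\le t_0<t_1\le T$, the restriction $(\rho,\bj)|_{[t_0,t_1]}$ is an admissible curve connecting $\rho_{t_0}$ to $\rho_{t_1}$, so Lemma~\ref{lem:reparametrization} (applied after a time rescaling to the unit interval) yields
\begin{equation*}
  \cT_\mu(\rho_{t_0},\rho_{t_1})\le \int_{t_0}^{t_1}\sqrt{\cA(\mu;\rho_t,\bj_t)}\di t.
\end{equation*}
Setting $m(t)=\sqrt{\cA(\mu;\rho_t,\bj_t)}\in L^2(0,T)$ (by the Cauchy--Schwarz inequality together with finiteness of the time integral), this shows $\rho\in\AC\bigl([0,T];(\mP_2(\Rd),\cT_\mu)\bigr)$. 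Dividing by $h=t_1-t_0$ and sending $h\to 0$ via Lebesgue's differentiation theorem gives the pointwise bound $|\rho'|(t)\le\sqrt{\cA(\mu;\rho_t,\bj_t)}$, which is the desired comparison in squared form.

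\textbf{The hard implication.} Assume $\rho\in\AC\bigl([0,T];(\mP_2(\Rd),\cT_\mu)\bigr)$. The strategy is the classical discretize-and-limit scheme in the spirit of \cite{AGS} and \cite{Erb14}. For each $n\in\N$, fix the uniform partition $t^n_k=kT/2^n$, $k=0,\dots,2^n$, and between successive nodes select a constant-speed geodesic $(\rho^{n,k},\bj^{n,k})$ connecting $\rho_{t^n_k}$ to $\rho_{t^n_{k+1}}$ on $[t^n_k,t^n_{k+1}]$ furnished by Proposition~\ref{prop:min-metric}; on this subinterval the action is constant and equals $\cT_\mu(\rho_{t^n_k},\rho_{t^n_{k+1}})^2/(t^n_{k+1}-t^n_k)^2$. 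Concatenating these pieces yields a pair $(\rho^n,\bj^n)\in\CE_T$ (the continuity equation concatenates by linearity of its weak formulation) such that
\begin{equation*}
  \int_0^T\cA(\mu;\rho^n_t,\bj^n_t)\di t = \sum_{k=0}^{2^n-1}\frac{\cT_\mu(\rho_{t^n_k},\rho_{t^n_{k+1}})^2}{t^n_{k+1}-t^n_k}\le \sum_{k=0}^{2^n-1}\int_{t^n_k}^{t^n_{k+1}}|\rho'|^2(t)\di t = \int_0^T|\rho'|^2(t)\di t,
\end{equation*}
where we used $\cT_\mu(\rho_{t^n_k},\rho_{t^n_{k+1}})\le\int_{t^n_k}^{t^n_{k+1}}|\rho'|(t)\di t$ and Cauchy--Schwarz. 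With this uniform action bound and the fact that $\sup_{t,n} M_2(\rho^n_t)<\infty$ (obtained from Lemma~\ref{lem:CE:tightness} after noting that $\rho^n$ coincides with $\rho$ on the partition nodes, whose second moments are uniformly bounded since $\rho$ is narrowly continuous into $\mP_2$), Proposition~\ref{prop:compactness-sol-ce} delivers a limit $(\tilde\rho,\bj)\in\CE_T$ along a subsequence. Because $\rho^n_{t}=\rho_t$ at the dyadic times $t=t^n_k$ and the set of dyadic points is dense, $\tilde\rho=\rho$. Lower semicontinuity of the action integral then gives $\int_0^T\cA(\mu;\rho_t,\bj_t)\di t\le\int_0^T|\rho'|^2(t)\di t$. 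Combined with the pointwise bound from the easy implication, we conclude $\cA(\mu;\rho_t,\bj_t)=|\rho'|^2(t)$ for a.e.\ $t\in[0,T]$, which proves the existence of a representing flux attaining equality in~\eqref{eq:metric-velocity:opt}. The main obstacle here is verifying that the concatenated pairs are genuine solutions of $\CE_T$ globally (not only piecewise) and that the limit extracted by compactness coincides with the given curve $\rho$; both follow from the linearity of the weak formulation \eqref{eq:nce-weak} and from the narrow continuity of $\rho$ in $\mP_2$.

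\textbf{Tangent-space characterization and uniqueness.} It remains to identify the optimal $\tilde\bj_t$ and its uniqueness. Given any $\bj$ with $(\rho,\bj)\in\CE_T$, the set of fluxes representing the same curve is the affine subspace $\bj+\cM_{\Div}(G)$ (pointwise in $t$), since adding a divergence-free flux does not alter the weak continuity equation. For fixed $\rho_t$, the map $\bj\mapsto\cA(\mu;\rho_t,\bj)$ is convex by Lemma~\ref{lem:convexity} and strictly convex on the subset where the positive part of $\bj$ with respect to $\gamma_1$ is nontrivial, thanks to the strict convexity of $r\mapsto r_+^2$. Therefore a minimizer $\tilde\bj_t$ of $\bm{d}\mapsto\cA(\mu;\rho_t,\bj_t+\bm{d})$ over $\cM_{\Div}(G)$ is unique; by construction $\tilde\bj_t\in T_\rho\mP_2(\Rd)$ as defined in~\eqref{eq:charact:tangent}, and $(\rho,\tilde\bj)\in\CE_T$. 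Since $\cA(\mu;\rho_t,\tilde\bj_t)\le\cA(\mu;\rho_t,\bj_t)$ pointwise a.e., integrating and combining with the pointwise inequality $|\rho'|^2(t)\le\cA(\mu;\rho_t,\tilde\bj_t)$ (valid for \emph{any} representing flux by the easy implication) forces equality $|\rho'|^2(t)=\cA(\mu;\rho_t,\tilde\bj_t)$ a.e. Conversely, if $\bj_t$ realizes equality and is not in $T_\rho\mP_2(\Rd)$, one could strictly decrease the action by projecting against a suitable $\bm{d}\in\cM_{\Div}(G)$, contradicting the identity $\cA=|\rho'|^2$; hence the equality in~\eqref{eq:metric-velocity:opt} is equivalent to $\tilde\bj_t\in T_\rho\mP_2(\Rd)$ for a.e.\ $t$, proving the last assertion.
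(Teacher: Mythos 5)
The first two steps of your proposal are sound and, in fact, more self-contained than the paper's own proof: the paper delegates the equivalence between $\AC([0,T];(\mP_2(\Rd),\cT_\mu))$ and curves of finite action, together with the existence of a flux attaining the metric derivative, to \cite[Theorem 5.17]{DNS09}, whereas you reconstruct it directly by the dyadic geodesic interpolation (using Lemma~\ref{lem:reparametrization}, Proposition~\ref{prop:min-metric}, the moment bound of Lemma~\ref{lem:CE:tightness}, and the compactness and lower semicontinuity of Proposition~\ref{prop:compactness-sol-ce}). Up to the usual bookkeeping (the $L^2$ versus $L^1$ integrability of the majorant $m$ and measurability in $t$ of the selected fluxes, which the cited reference handles), this part is correct.

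The genuine gap is in your last paragraph, on uniqueness and the identification with $T_\rho\mP_2(\Rd)$. First, the map $r\mapsto r_+^2$ is \emph{not} strictly convex (it vanishes identically on $(-\infty,0]$), and $\bj\mapsto\cA(\mu;\rho_t,\bj)$ is not strictly convex on all of $\cM(G)$; hence "a minimizer of $\bm d\mapsto\cA(\mu;\rho_t,\bj_t+\bm d)$ over $\cM_{\Div}(G)$ is unique" does not follow as written. Strict convexity holds only after restricting to fluxes with $\bj^+\ll\gamma_1$ and $\bj^-\ll\gamma_2$, in particular on the antisymmetric class $\cM^{\mathrm{as}}_{\gamma_1}(G)$ of \eqref{eq:def:Mas}, where the pointwise integrand becomes $j\mapsto (j_+)^2/\tilde\gamma_1+(j_-)^2/\tilde\gamma_2$. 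The reduction to this class is exactly Corollary~\ref{cor:antsym_vect_field_lower_action}, which you never invoke; without it the assertion that the minimizer lies in $T_\rho\mP_2(\Rd)$ "by construction" is unjustified, since $T_\rho\mP_2(\Rd)$ is by definition a subset of $\cM^{\mathrm{as}}_{\gamma_1}(G)$. Second, you assert but do not prove that the minimizer is \emph{attained}: this is where the paper's proof does most of its work, showing via Lemma~\ref{lem:A:TV:bound} (with $\Phi(x,y)=|x-y|\vee|x-y|^2$) that the sublevel sets of $\bj\mapsto\cA(\mu;\rho_t,\bj)$ are locally narrowly relatively compact, that the constraint set $\{\bj\in\cM^{\mathrm{as}}_{\gamma_1}(G):\dgrad\cdot\bj=\dgrad\cdot\bj_t\}$ is narrowly closed, and then applying the direct method together with the lower semicontinuity of Lemma~\ref{lem:l.s.c.action}. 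Finally, your converse argument ("one could strictly decrease the action by projecting against a suitable $\bm d$") implicitly modifies the flux on a set of times of positive measure and thus needs a measurable-in-time selection of the competitors before the easy-implication bound can be applied; this should be said. Repairing the paragraph essentially amounts to reinstating Corollary~\ref{cor:antsym_vect_field_lower_action}, the correct strict-convexity statement on $\cM^{\mathrm{as}}_{\gamma_1}(G)$, and the compactness argument for existence of the constrained minimizer.
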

\begin{proof}
The first statement on the characterization of absolutely continuous curves as curves of finite action follows from \cite[Theorem 5.17]{DNS09}, in view of Lemma \ref{lem:reparametrization} and Propositions \ref{prop:compactness-sol-ce} and \ref{prop:min-metric}. Let us now show that \eqref{eq:metric-velocity:opt} holds if and only if $\tilde\bj_t$ belongs to $T_\rho\mP_2(\Rd)$ for a.e.\ $t\in[0,1]$, given by~\eqref{eq:charact:tangent}. Let $t\in[0,1]$ be so that $\bj_t$ verifies $\cA(\mu;\rho_t,\bj_t) <+ \infty$. Due to Corollary \ref{cor:antsym_vect_field_lower_action}, the element~$\tilde\bj_t$ of minimal action satisfying \eqref{eq:metric-velocity:opt} is characterized by $\partial_t \rho_t + \dgrad \cdot\bj_t = 0 = \partial_t \rho_t +\dgrad \cdot\tilde\bj_t$, that is,
 \[
   \tilde \bj_t = \argmin_{\bj \in \cM_{\gamma_1}^{\mathrm{as}}(G)}\bigl\{ \cA(\mu;\rho_t, \bj) : \dgrad\cdot \bj = \dgrad\cdot \bj_t\bigr\}.
 \]
Recalling the notation for the Jordan decomposition of a measure from Section \ref{subsec:action}, note that we use that the functional $\bj \mapsto \cA(\mu;\rho,\bj)$ is strictly convex for $\bj\in \cM(G)$ such that $\bj^+ \ll \rho\otimes \mu$ and $\bj^- \ll \mu \otimes \rho$, which is guaranteed above since $\cA(\mu;\rho,\bj) < \infty$ and $\bj\in\cM^{\mathrm{as}}_{\gamma_1}(G)$.
Then, we observe the set $\{\bj\in\cM_{\gamma_1}^{\mathrm{as}}(G):\dgrad\cdot\bj=\dgrad\cdot\bj_t\}$ is closed with respect to the narrow convergence. In addition, the estimate \eqref{eq:control-phi} from Lemma \ref{lem:A:TV:bound} with $\Phi(x,y) = |x-y|\vee |x-y|^2$ gives
\[
 \frac12\iint_K \eta(x,y) \di\abs*{\bj}(x,y)\leq \frac{\sqrt{2C_\eta} \sqrt{\cA(\mu;\rho_t,\bj)}}{\inf_K(|x-y|\vee|x-y|^2)} \quad \text{for all compact $ K\subset G$},
\]
showing that the sublevel sets of $\bj \mapsto \cA(\mu;\rho_t,\bj)$ are locally relatively compact with respect to the narrow convergence, arguing as in the proof of Proposition \ref{prop:compactness-sol-ce}. Hence the element $\tilde \bj_t$ is well-defined by applying the direct method of calculus of variations.
\end{proof}
We defined the tangent space $T_\rho\mP_2(\Rd)$ in \eqref{eq:charact:tangent} using the nonlocal fluxes $\bj$. We note that this is in some way a nonlocal, Lagrangian description of the tangent vectors and that the relationship between this Lagrangian description and the Eulerian description is the nonlocal continuity equation
\[ \partial_t \rho_t = - \dgrad \cdot \bj, \]
which is satisfied in the weak sense. This provides a useful heuristic, but as for classical Wasserstein gradient flows \cite{AGS} the precise, rigorous definition of the tangent space is in Lagrangian form; we note, however, that here we use fluxes instead of velocities. This is not just a superficial difference. Namely, as can be seen in Proposition  \ref{prop:tangent-bundle}, the relation between velocities and fluxes is not linear and thus the velocities do not provide a linear parametrization of the tangent space.
We use the argument from \cite[Theorem 5.21]{DNS09} to characterize the tangent space $T_\rho\mP_2(\Rd)$ in more detail:
\begin{proposition}[Tangent fluxes have almost gradient velocities]\label{prop:tangent-bundle}
Let $\mu\in \cM^+(\Rd)$ satisfy Assumptions~\ref{it:as-conv} and~\ref{it:as-tight}, and $\rho\in\mP_2(\Rd)$.
Then, it holds that $\bj\in T_{\rho}\mP_2(\Rd)$ if and only if $\bj\in \cM(G)$ with $\bj^+\ll\gamma_1$, $\bj^- \ll \gamma_2$, and $v^+:=\frac{\di\bj^+}{\di\gamma_1}$, $v^- :=\frac{\di\bj^-}{\di\gamma_2}$ satisfy, for $v:=v^+ - v^-:G\to \R$, the relation
\begin{equation}\label{eq:tangent-bundle}
  v \in \overline{\left\{\dgrad\varphi : \varphi\in C_\mathrm{c}^\infty(\Rd)\right\}}^{L^2(\eta\,\widehat\gamma^v)}, \qquad\text{where}\qquad
  \di\widehat\gamma^v = \chi_{\set{v>0}} \di\gamma_1 + \chi_{\set{v<0}} \di\gamma_2.
\end{equation}
\end{proposition}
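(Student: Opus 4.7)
The plan is to recognize the tangent condition \eqref{eq:charact:tangent} as the Euler--Lagrange condition for a convex minimization problem and, via the velocity--flux identification of Lemma~\ref{lem:action}, transplant that condition to a Hilbert-space orthogonality in $L^2(\eta\,\widehat\gamma^v)$, following the strategy of \cite[Theorem 5.21]{DNS09}.

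First I would pass from fluxes to velocities. Given $\bj \in T_\rho\mP_2(\Rd) \subset \cM_{\gamma_1}^{\mathrm{as}}(G)$ with $\cA(\mu;\rho,\bj) < \infty$, Lemma~\ref{lem:action} produces an antisymmetric measurable $v \colon G \to \R$ with $\di\bj = v_+\di\gamma_1 - v_-\di\gamma_2$, which simultaneously yields $\bj^+ \ll \gamma_1$ and $\bj^- \ll \gamma_2$ with densities $v^+ := v_+$ and $v^- := v_-$. Rearranging \eqref{eq:action:v} by splitting the integration over $\{v>0\}$ and $\{v<0\}$ gives the identity
\[
\cA(\mu;\rho,\bj) = \tfrac{1}{2}\iint_G v^2 \eta \di\widehat\gamma^v,
\]
so the action is (twice) the squared $L^2(\eta\,\widehat\gamma^v)$-norm of $v$. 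In particular $v \in L^2(\eta\,\widehat\gamma^v)$, and the closure in \eqref{eq:tangent-bundle} is well-defined.

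Next, using Corollary~\ref{cor:antsym_vect_field_lower_action} to restrict to antisymmetric divergence-free perturbations $\bm{d}$ with velocity $d$, I would compute the first variation. For $\tau > 0$ small, $v + \tau d$ shares the sign of $v$ on $\{v\neq 0\}$, so
\[
\cA(\mu;\rho,\bj+\tau\bm{d}) = \tfrac{1}{2}\iint_{\{v\neq 0\}}(v+\tau d)^2 \eta \di\widehat\gamma^v + \tfrac{\tau^2}{2}\iint_{\{v = 0\}} d^2\eta\di\widehat\gamma^d,
\]
and the right derivative at $\tau = 0$ equals $\iint_G vd\,\eta\di\widehat\gamma^v$ (the integrand vanishing on $\{v=0\}$). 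Since $\pm\bm{d}$ are both admissible, the minimality condition in \eqref{eq:charact:tangent} forces this inner product to vanish. A short computation, splitting the defining integral $\iint \dgrad\varphi\,\eta\di\bm{d}$ into contributions on $\{v>0\}$ and $\{v<0\}$ and using antisymmetry of $d$, $\dgrad\varphi$, and symmetry of $\eta$, shows that $\bm{d}$ is divergence-free if and only if its velocity $d$ satisfies $\langle \dgrad\varphi, d\rangle_{L^2(\eta\,\widehat\gamma^v)} = 0$ for every $\varphi\in C_\mathrm{c}^\infty(\Rd)$. Thus $v$ is $L^2(\eta\,\widehat\gamma^v)$-orthogonal to the closure of the set of divergence-free velocities, which by the standard Hilbert-space identity $W^{\perp\perp} = \overline{W}$ means exactly that $v \in \overline{\{\dgrad\varphi : \varphi \in C_\mathrm{c}^\infty(\Rd)\}}^{L^2(\eta\,\widehat\gamma^v)}$. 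For the converse, if $v$ lies in this closure, pick $\varphi_n$ with $\dgrad\varphi_n\to v$ in $L^2(\eta\,\widehat\gamma^v)$; the convexity of $\cA(\mu;\rho,\cdot)$ from Lemma~\ref{lem:convexity} combined with the vanishing of the first variation at $\bj$ then gives $\cA(\mu;\rho,\bj) \le \cA(\mu;\rho,\bj+\bm{d})$ for every $\bm{d}\in\cM_{\Div}(G)$.

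The main obstacle is the nonsmooth, nonlinear nature of the upwind action: the reference measure $\widehat\gamma^v$ itself depends on the sign set of $v$, so perturbations of $\bj$ perturb both the quadratic integrand and the measure of integration. Care is required to justify the first-variation computation across the kink set $\{v = 0\}$ (where $\alpha$ is only positively one-homogeneous, not differentiable) and to convert the flux-level divergence-free condition into the velocity-level orthogonality with respect to the $v$-dependent reference measure $\widehat\gamma^v$; both steps rely essentially on the antisymmetry of the velocity field and on the fact that perturbations restricted to $\{v=0\}$ contribute only $O(\tau^2)$ to the action.
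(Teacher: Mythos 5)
Your proposal follows essentially the same route as the paper: identify the velocity $v$ via Lemma~\ref{lem:action}, rewrite the action as the squared norm of $v$ in a weighted $L^2$ space ($L^2(\eta\,\widehat\gamma^v)$ for you, the equivalent $L^2(\eta\,\gamma_+)$ with $\gamma_+=\gamma_1|_{\supp\bj^+}$ in the paper, related by $\gamma_++\gamma_+^\top=\widehat\gamma^v$ and antisymmetry), translate the divergence-free constraint into orthogonality to $\{\dgrad\varphi\}$, and conclude by the Hilbert-space double-orthogonal-complement argument; you make explicit the first-variation step that the paper asserts as an ``equivalence'' without detail. One caveat: your displayed decomposition of $\cA(\mu;\rho,\bj+\tau\bm{d})$ is not literally valid for fixed $\tau>0$, since $v+\tau d$ need not share the sign of $v$ everywhere on $\{v\neq 0\}$; the right derivative you extract is nonetheless correct because the sign-flip set contributes only $o(\tau)$ (by convexity of $\alpha$ and dominated convergence), so this is a repairable imprecision rather than a gap.
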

\begin{proof}
  If $\cA(\mu;\rho,\bj)<\infty$, then by Lemma \ref{lem:action} it holds for some $v\in \cV^{\mathrm{as}}(G)$ that
  \begin{align*}
    \di\bj(x,y) &= v(x,y)_+ \di\gamma_1(x,y) - v(x,y)_- \di\gamma_1(y,x) = v(x,y) \di\gamma_+(x,y) - v(y,x) \di\gamma_+(y,x) \ ,
  \end{align*}
  where $\gamma_+ = \gamma_1|_{J^+}$, with $J^+ = \supp \bj^+$, and we used that $(J^+)^\top = \supp \bj^-$. Then, by recalling the definition of the norm on $L^2(\eta\,\gamma_1)$ from~\eqref{eq:def:L2G},
  \[
    \cA(\mu;\rho,\bj) = 2 \norm{v_+}_{L^2(\eta\,\gamma_1)}^2 = 2 \norm{v}_{L^2(\eta\,\gamma_+)}^2 . 
  \]
  By using the relation between $\bj$ and $v$ from above, we can rewrite the divergence $\dgrad\cdot \bj$ in weak form for any $\psi\in C_\mathrm{c}^\infty(\Rd)$:
  \[
    \frac12\iint_G \dgrad \psi\,\eta \di\bj =  \iint_G \dgrad\psi \, v_+ \, \eta \di\gamma_1 = \iint_G \dgrad \psi \, v \, \eta \di\gamma_+.
  \]
  Now, the characterization \eqref{eq:charact:tangent} of $\bj \in T_\rho \mP_2(\Rd)$ is equivalent to
  \[
    \iint_G \abs{v}^2 \eta \di\gamma_+ \leq \iint_G \abs{v+w}^2 \eta \di\gamma_+ \quad\text{for all $w\in \cV^\mathrm{as}(G)$ so that $\iint_G \dgrad\psi\, w \, \eta \di\gamma_+ = 0\;\; \forall \psi\in C_\mathrm{c}^\infty(\Rd)$}.
  \]
  Hence $v^+$ belongs to the closure of $\set{\dgrad \varphi :  \varphi \in C_\mathrm{c}^\infty(\Rd) }$ in $L^2(\eta\,\gamma_+)$. From the antisymmetry of $v$ follows that $v^-$ belongs to the closure of $\set{\dgrad \varphi :  \varphi \in C_\mathrm{c}^\infty(\Rd) }$ in $L^2(\eta\,\gamma_-)$. Thus, the conclusion follows from the identity $\gamma_+ + \gamma_+^\top = \hat\gamma^v$ on $G$.
\end{proof}
\begin{rem}
Proposition \ref{prop:tangent-bundle} shows that for $\mu$ as in its statement, $\rho\in\mP_2(\Rd)$ and $\bj$ chosen from a dense subset of $T_{\rho}\mP_2(\Rd)$, there exists a measurable $\varphi:\Rd\to\R$ such that we have the identity
\[
\cA(\mu;\rho,\bj)=\cA(\mu;\rho,\dgrad\varphi\,\gamma_1)=\iint_G\abs[\big]{\bra[\big]{\dgrad\varphi}_+}^2\eta \di\gamma_1.
\]
\end{rem}
Finally, we provide an interesting property of absolutely continuous curves.
\begin{proposition}[Absolutely continuous curves stay supported on $\mu$] \label{prop:supp-in-mu}
    Let $\mu\in \cM^+(\Rd)$ satisfy Assumptions~\ref{it:as-conv} and~\ref{it:as-tight} and $\rho\in \AC([0,T],(\mP_2(\R^d),\cT_\mu))$ be such that $\supp \rho_0 \subseteq \supp \mu$. Then, for all $t\in[0,T]$, it holds $\supp\rho_t\subseteq \supp \mu$. 
\end{proposition}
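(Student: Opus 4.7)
The plan is to exploit the upwind structure of the action: finiteness of $\cA(\mu;\rho,\bj)$ forces $\bj^+\ll \rho\otimes\mu$ and $\bj^-\ll \mu\otimes\rho$ in the Jordan decomposition. Consequently, the $\bj^-$-channel can only deposit mass at points of $\supp\mu$, while the $\bj^+$-channel requires mass at a source $x\in \supp\rho$ in order to transport it to a target $y\in \supp\mu$. Thus a non-negative bump supported away from $\supp\mu$ can only lose $\rho_t$-mass over time, which combined with $\supp\rho_0\subseteq\supp\mu$ will force it to carry zero mass at every $t$.

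First, by Proposition~\ref{prop:metric-velocity} and Corollary~\ref{cor:antsym_vect_field_lower_action}, I pick a Borel family $(\bj_t)_{t\in[0,T]}$, antisymmetric for a.e.~$t$, such that $(\rho,\bj)\in \CE_T$ and $\int_0^T \sqrt{\cA(\mu;\rho_t,\bj_t)}\di t<\infty$. Then $\cA(\mu;\rho_t,\bj_t)<\infty$ for a.e.\ $t$, so Lemma~\ref{lem:action} delivers a measurable $v_t\in \cV^{\mathrm{as}}(G)$ with
\[
 \di\bj_t(x,y) = v_t(x,y)_+\di\rho_t(x)\di\mu(y) - v_t(x,y)_-\di\mu(x)\di\rho_t(y).
\]
Because $v_t$ is antisymmetric, the two non-negative measures on the right are mutually singular, hence they furnish the Jordan decomposition of $\bj_t$, giving the key absolute-continuity relations $\bj_t^+\ll \rho_t\otimes\mu$ and $\bj_t^-\ll \mu\otimes\rho_t$.

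Next, fix $x_0\notin\supp\mu$. Since $\supp\mu$ is closed there exists $r>0$ with $B_r(x_0)\cap\supp\mu = \emptyset$; choose $\varphi\in C_\mathrm{c}^\infty(\Rd)$ with $\varphi\geq 0$, $\supp\varphi\subseteq B_r(x_0)$, and $\varphi(x_0)>0$. Inserting this time-independent $\varphi$ into Lemma~\ref{lem:nce:weak_cont} and using \eqref{eq:nl_divas} for the antisymmetric $\bj_t$ gives, for every $t\in[0,T]$,
\[
 \int_\Rd\varphi\di\rho_t \;=\; \int_\Rd\varphi\di\rho_0 \;-\; \int_0^t \iint_G \varphi(x)\,\eta(x,y)\di\bj_s(x,y)\di s.
\]
Splitting $\bj_s=\bj_s^+-\bj_s^-$: the $\bj_s^-$-integral vanishes because its $x$-marginal is absolutely continuous with respect to $\mu$ while $\varphi$ vanishes on $\supp\mu$; the $\bj_s^+$-integral is non-negative since $\varphi,\eta\geq 0$. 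Together with $\int\varphi\di\rho_0=0$ from $\supp\rho_0\subseteq\supp\mu$, this yields $\int\varphi\di\rho_t\leq 0$, hence $\int\varphi\di\rho_t=0$.

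Finally, varying $\varphi$ over non-negative smooth bumps supported in $B_r(x_0)$ dominating indicators of arbitrary compact subsets gives $\rho_t(B_r(x_0))=0$, and therefore $x_0\notin\supp\rho_t$. As $x_0\notin\supp\mu$ was arbitrary, $\supp\rho_t\subseteq\supp\mu$. I expect the main technical point is the rigorous pointwise-in-$t$ extraction of the decomposition of $\bj_t$ from the integrated action bound, which relies only on the a.e.\ finiteness of $\cA(\mu;\rho_t,\bj_t)$; once this is in place the rest of the argument is a straightforward one-sided balance on the bump's mass.
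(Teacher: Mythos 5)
Your proof is correct and follows essentially the same route as the paper: extract the antisymmetric tangent flux via Proposition~\ref{prop:metric-velocity} and its upwind representation from Lemma~\ref{lem:action}, use the weakly continuous representative from Lemma~\ref{lem:nce:weak_cont}, and test with a non-negative $\varphi$ supported off $\supp\mu$ so that only a sign-definite outflow term survives. The only cosmetic difference is that you route the sign argument through the Jordan decomposition ($\bj_t^+\ll\rho_t\otimes\mu$, $\bj_t^-\ll\mu\otimes\rho_t$; note the mutual singularity comes from the disjointness of $\{v_t>0\}$ and $\{v_t<0\}$ rather than from antisymmetry per se), whereas the paper works directly with the velocity density $v_t(x,y)_+\,\di\rho_t(x)\di\mu(y)$.
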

\begin{proof}
   Since $(\rho_t)_{t\in[0,T]}$ is absolutely continuous, there exists by Proposition~\ref{prop:metric-velocity} a unique family $(\bj_t)_{t\in[0,T]}$ such that $(\rho,\bj) \in \CE_T$ and $\bj_t \in T_{\rho_t}\cP_2(\R^d)\subseteq \cM^{\mathrm{as}}_{\gamma_{1,t}}(G)$, where $\gamma_{1,t} = \rho_t \otimes \mu$, and $\abs{\rho_t'}^2= \cA(\mu;\rho_t,\bj_t)$ for a.e.~$t\in [0,T]$. 
   In particular, by Lemma~\ref{lem:action}, there exists a measurable family $(v_t)_{t\in[0,T]}\subset \cV^{\mathrm{as}}(G)$ such that 
   \[ 
     \di\bj_t(x,y) = v_t(x,y)_+\di\rho_t(x)\di\mu(y) - v_t(x,y)_-\di\mu(x)\di\rho_t(y).
   \]
   Without loss of generality, let $(\rho_t)_{t\in[0,T]}$ be the weakly continuous curve from Lemma~\ref{lem:nce:weak_cont} satisfying, for any test function $\varphi\in C_\mathrm{c}^\infty(\R^d)$ and $t\in[0,T]$,
   \begin{align*}
     \int_{\R^d} \varphi(x) \di\rho_t(x) &= \int_{\R^d} \varphi(x) \di\rho_0(x) + \frac12\int_0^t \iint_G \dgrad\varphi(x,y)\eta(x,y)\di\bj_s(x,y)\di{s} \\
     &= \int_{\R^d} \varphi(x) \di\rho_0(x) +  \int_0^t \iint_G \dgrad\varphi(x,y) v_s(x,y)_+ \eta(x,y)\di(\rho_s\otimes\mu)(x,y)\di{s} .
   \end{align*}
   Now, let $\varphi\in C_\mathrm{c}^\infty(\R^d)$ with $\varphi\geq 0$ and $\supp \varphi \subseteq \R^d \setminus \supp \mu$. Then, for all $t\in[0,T]$, it holds 
   \[
    \frac12\int_{\R^d} \varphi(x) \di\rho_t(x) = - \int_0^t \iint_G \varphi(x) v_s(x,y)_+ \eta(x,y)\di(\rho_s\otimes\mu)(x,y)\di{s} \leq 0 ,
   \]
   which implies that $\supp \rho_t \subseteq \supp \mu$, since $\rho_t \in \cP(\R^d)$ is in particular a non-negative measure for all $t\in[0,T]$ by Lemma~\ref{lem:nce:weak_cont}.
\end{proof}
%
%
\section{Nonlocal nonlocal-interaction equation} \label{sec:NLIE}
In this section we consider gradient flows in the spaces of probability measures $\mP_2(\Rd)$ endowed with the nonlocal transportation quasi-metric $\cT_\mu$, defined by \eqref{eq:nonloc-upwind-transp-cost}. From now until Section \ref{sec:stab-exist} (excluded) we fix $\mu\in \cM^+(\Rd)$ satisfying \ref{it:as-conv} and~\ref{it:as-tight}, unless otherwise specified. For this reason we shall use the simplifications $\cA(\rho,\bj)$ for $\cA(\mu;\rho,\bj)$ and $\cT$ for $\cT_\mu$.

In this section investigate the nonlocal nonlocal-interaction equation~\eqref{eq:nlnl-intro} as a gradient flow with respect to the metric $\cT$. We restate it in a one-line form and note that from now on we consider the external potential $P \equiv 0$. The extension to $P \not\equiv 0$ is straightforward; see Remark \ref{rem:ExtPot}.
\begin{equation}\label{eq:nlnl-interaction-eq}
     \partial_t\rho_t(x)+\int_\Rd \dgrad(K*\rho)(x,y)_- \eta(x,y) \rho_t(x) \di\mu(y) - \int_\Rd \dgrad(K*\rho)(x,y)_+ \eta(x,y) \di\rho_t(y)=0. \tag{NL$^2$IE}
 \end{equation}
In the classical setting of gradient flows in the spaces of probability measures endowed with the Wasserstein metric \cite{AGS,CDFLS11}, the nonlocal-interaction equation 
\begin{equation} \label{eq:NLIE}
\partial_t\rho_t+ \nabla \cdot ( \rho_t \nabla (K * \rho_t)) = 0
\end{equation}
is the gradient flow of the \emph{nonlocal-interaction energy} 
\begin{equation}\label{eq:inter-en-functional}
    \cE(\rho)= \frac{1}{2}\iint_{\Rd\times\Rd} K(x,y)\di\rho(x)\di\rho(y),
\end{equation}
We start by discussing  the geometry of \eqref{eq:nlnl-interaction-eq} and interpret it as the gradient flow of \eqref{eq:inter-en-functional} in the infinite-dimensional Finsler manifold 
of measures endowed with the Finsler metric associated to $\cT$. 
Following this, we develop a framework of gradient flows in the quasi-metric space $\cT$, which extends the setup of gradient flows in metric spaces \cite{AGS} to quasi-metric spaces. In particular, we build the existence theory for \eqref{eq:nlnl-interaction-eq} based on this approach.

Above, for simplicity, \eqref{eq:nlnl-interaction-eq} was written for $\rho\ll\mu$, where we recall that we used the notation~$\rho$ to denote both the measure and the density with respect to $\mu$. Our framework, however, also applies to the case when $\rho$ is not absolutely continuous with respect to $\mu$. The general weak form of \eqref{eq:nlnl-interaction-eq} is obtained in terms of the nonlocal continuity equation as introduced in Section \ref{subsec:nonloc-cont-eq}. Specifically, we have the following definition.
\begin{definition}\label{defn:nl2ie}
A curve $\rho \colon [0,T]\to \cP_2(\R^d)$ is called a \emph{weak solution} to~\eqref{eq:nlnl-interaction-eq} 
if, for the flux $\bj\colon [0,T]\to\cM(G)$ defined by
\begin{equation*}
  \di\bj_t(x,y)=\dgrad\frac{\delta \cE}{\delta\rho}(x,y)_- \di\rho_t(x)\di\mu(y)-\dgrad\frac{\delta \cE}{\delta\rho}(x,y)_+ \di\rho_t(y)\di\mu(x),
\end{equation*}
the pair $(\rho,\bj)$ is a weak solution to the continuity equation
\begin{equation*}
  \partial_t\rho_t+\dgrad\cdot\bj_t=0 \qquad \text{on}\ [0,T]\times\Rd,
\end{equation*}
according to Definition \ref{def:nce-flux-form}.
\end{definition}

Here we list the assumptions on the interaction kernel $K\colon\Rd\times\Rd\to\R$ we refer to throughout this section:
\begin{enumerate}[label=\textbf{(K\arabic*)}]
    \item\label{as:K:cont} $K\in C(\Rd\times\Rd)$;
    \item\label{as:K:sym} $K$ is symmetric, i.e., $K(x,y)=K(y,x)$ for all $(x,y)\in\Rd\times\Rd$;
    \item\label{as:K:LipQuad} $K$ is $L$-Lipschitz near the diagonal and at most quadratic far away, that is there exists some $L\in (0,\infty)$ such that, for all $(x,y),(x',y')\in\Rd\times\Rd$,
    \[
      |K(x,y)-K(x',y')|\le L\left(|(x,y)-(x',y')|\vee|(x,y)-(x',y')|^2\right).
    \]
\end{enumerate}

\begin{rem}\label{rem:ExtPot}
 Assumption~\ref{as:K:LipQuad} implies that, for some $C >0$ and all $x,y\in \Rd$,
 \begin{equation}\label{as:K:Quad}
   \abs{K(x,y)} \leq C \bra*{1+ \abs{x}^2 + \abs{y}^2};
 \end{equation}
 indeed, for fixed $(x',y')\in \Rd\times \Rd$, \ref{as:K:LipQuad} yields
 \[
   \abs{K(x,y)} - \abs{K(x',y')} \leq L \bra*{ 1 \vee 2\bra*{ |(x,y)|^2 + |(x',y')|^2}},
 \]
 and bounding the maximum ($\vee$) by the sum, we arrive at $\abs{K(x,y)} \leq L +2 L \bra*{|(x',y')|^2 + |(x,y)|^2} + \abs{K(x',y')}$, which gives~\eqref{as:K:Quad} with $C=2L\bigl(1+|(x',y')|^2\bigr) + \abs{K(x',y')}$. We notice, by the way, that the bound~\eqref{as:K:Quad} implies that $\cE\colon \mP_2(\Rd)\to \R$ is proper with domain equal to ~$\cP_2(\Rd)$.
 
As mentioned previously, the theory in this section can be easily extended to energies of the form~\eqref{e:energy:intro} including potential energies $\cE_P(\rho)=\int_\Rd P \dx{\rho}$ for some external potential $P\colon\R^d \to \R$ satisfying a local Lipschitz condition with at-most-quadratic growth at infinity; that is, similarly to~\ref{as:K:LipQuad}, there exists $L\in (0,\infty)$ so that for all $x,y\in \R^d$ we have
 \[
  \abs{P(x)-P(y)} \leq L \bra*{ |x-y|\vee |x-y|^2} . 
 \]
\end{rem}
We now show that, under the above assumptions on the interaction potential $K$, we have narrow continuity of the energy.
\begin{proposition}[Continuity of the energy]\label{prop:cont-energy}
  Let the interaction potential $K$ satisfy Assumptions~\ref{as:K:cont}--\ref{as:K:LipQuad}. Then, for any sequence $(\rho^n)_n \subset \mP_2(\Rd)$ such that $\rho^n \rightharpoonup \rho$ as $n\to\infty$ for some $\rho\in\mP_2(\Rd)$, we have
  \[
    \lim_{n\to\infty} \cE(\rho^n) = \cE(\rho).
  \]
\end{proposition}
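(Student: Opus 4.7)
The plan rests on two observations. First, Assumption~\ref{as:K:LipQuad} is actually stronger than it looks: by decomposing any segment in $\R^{2d}$ into sub-segments of length at most $1$ and applying the Lipschitz bound on each, one checks that $K$ is globally $L$-Lipschitz on $\R^{2d}$. Second, narrow convergence $\rho^n\rightharpoonup\rho$ on $\R^d$ implies narrow convergence of the product measures $\rho^n\otimes\rho^n\rightharpoonup\rho\otimes\rho$ on $\R^{2d}$ (test on products $\phi(x)\psi(y)$ and extend to $C_\mathrm{b}(\R^{2d})$ by tightness and density). Combined with the quadratic bound $|K(x,y)|\le C(1+|x|^2+|y|^2)$ from~\eqref{as:K:Quad}, all integrals are finite.

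I would then truncate at the level of the product measure. Fix $R>0$ and a cutoff $\chi_R\in C_\mathrm{c}(\R^{2d};[0,1])$ with $\chi_R\equiv 1$ on $B_R(0)$ and $\chi_R\equiv 0$ outside $B_{2R}(0)$, and split
\begin{equation*}
  \iint K\,\di(\rho^n\otimes\rho^n) \;=\; \iint K\chi_R\,\di(\rho^n\otimes\rho^n) \;+\; \iint K(1-\chi_R)\,\di(\rho^n\otimes\rho^n).
\end{equation*}
Since $K\chi_R\in C_\mathrm{b}(\R^{2d})$ by~\ref{as:K:cont}, the first piece passes to the limit by narrow convergence of products. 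For the tail, the quadratic growth gives
\begin{equation*}
  \left|\iint K(1-\chi_R)\,\di(\rho^n\otimes\rho^n)\right| \;\le\; C\iint_{|(x,y)|>R}(1+|x|^2+|y|^2)\,\di(\rho^n\otimes\rho^n),
\end{equation*}
with the analogous estimate for $\rho\otimes\rho$. Taking $n\to\infty$ at fixed $R$ and then $R\to\infty$ would close the argument.

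The main obstacle is the uniform (in $n$) smallness of the tail as $R\to\infty$. This reduces to uniform integrability of $|x|^2$ with respect to $(\rho^n)_n$, which does \emph{not} follow from narrow convergence alone; simple mass-escape examples show that otherwise only lower semicontinuity can be expected. The cleanest sufficient input is $\sup_n M_2(\rho^n)<\infty$: by Chebyshev this yields uniform integrability of $|x|$, and together with global Lipschitz continuity of $K$ it upgrades to the Kantorovich--Rubinstein bound $|\cE(\rho^n)-\cE(\rho)|\le 2L\,W_1(\rho^n,\rho)$, which tends to zero since narrow convergence plus bounded second moments gives $W_1$-convergence. In the gradient-flow context in which the proposition will be used, this uniform second-moment control is supplied by finiteness of the action along the sequence via Lemma~\ref{lem:CE:tightness}, so the extra hypothesis is essentially implicit. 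Without such input the argument only produces semicontinuity, not the stated equality.
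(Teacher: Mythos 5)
Your assessment is essentially correct, and it identifies precisely where the paper's own proof is thin. The paper argues along the same truncation route you propose: it sets $K_R=\varphi_R K$, passes to the limit in $n$ at fixed $R$ using narrow convergence against the bounded continuous kernel $K_R$, removes the truncation for each fixed measure by dominated convergence, and then concludes ``by a diagonal argument''. That final step is exactly the uniform-in-$n$ tail control you flag as missing: narrow convergence of $(\rho^n)_n$ in $\mP_2(\Rd)$ does not provide it, and without it the conclusion can genuinely fail. For example, $K(x,y)=|x-y|$ satisfies \ref{as:K:cont}--\ref{as:K:LipQuad}, and $\rho^n=(1-\tfrac1n)\delta_0+\tfrac1n\,\delta_{n e_1}\in\mP_2(\Rd)$ converges narrowly to $\delta_0$, yet $\cE(\rho^n)=1-\tfrac1n\to 1\neq 0=\cE(\delta_0)$. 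So the extra input you ask for ($\sup_n M_2(\rho^n)<\infty$, or any hypothesis yielding uniform integrability of first moments) is not merely convenient but necessary, and it is indeed available where the proposition is invoked: in Lemma~\ref{lem:lsc-Giorgi} the assumption $\sup_n M_2(\rho^n_0)<\infty$ combined with Lemma~\ref{lem:CE:tightness} supplies uniform second moments at the times where the energy is evaluated.

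Your repaired argument is correct and, if anything, cleaner than the paper's: the self-improvement of \ref{as:K:LipQuad} to global $L$-Lipschitz continuity of $K$ (hence at most linear growth, sharper than \eqref{as:K:Quad}) is right; the Kantorovich--Rubinstein bound $|\cE(\rho^n)-\cE(\rho)|\le C L\, W_1(\rho^n,\rho)$ follows by splitting the difference of the two quadratic forms and applying duality in each variable separately; and narrow convergence together with uniformly bounded second moments does give $W_1(\rho^n,\rho)\to 0$. Alternatively, your truncation route also closes under the same hypothesis once you use the linear rather than the quadratic growth of $K$, since Chebyshev then controls the tails uniformly in $n$ through first moments only. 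Two minor quibbles: the aside that without moment control ``only lower semicontinuity can be expected'' is not accurate in general, since a globally Lipschitz $K$ need not be bounded below (replace $K$ by $-K$ in the example above and lower semicontinuity fails as well); and when the proposition is used later it should be quoted with the additional moment hypothesis, which the paper's formulation omits.
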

\begin{proof}
  Let $(\rho^n)_n \subset \mP_2(\Rd)$ and $\rho\in\mP_2(\Rd)$ be such that $\rho^n \rightharpoonup \rho$ as $n\to\infty$. For all $R>0$, we write~$\overline B_R$ the closed ball of radius $R$ centered at the origin in $(\Rd)^2$ and $\varphi_R \colon (\Rd)^2 \to \R$ a continuous function such that $\varphi_R(z) = 1$ for all $z\in \overline B_R$, $\varphi_R(z) = 0$ for all $z\in (\Rd)^2 \setminus \overline B_{2R}$, and $\varphi_R(z) \leq 1$ for all $z\in (\Rd)^2$. For all $R>0$, we then set $K_R = \varphi_R K$ and
  \begin{equation*}
    \cE_R(\nu) = \frac{1}{2}\iint_{\Rd\times \R^d} K_R(x,y)\di\nu(y)\di\nu(x) \quad \mbox{for all $\nu\in\mP_2(\Rd)$}.
  \end{equation*}
  Since $(\rho^n)_n$ converges narrowly to $\rho$ as $n\to\infty$ and $K_R$ is bounded and continuous, we get
  \[
    \cE_R(\rho^n) \to \cE_R(\rho) \quad \mbox{as $n\to\infty$}.
  \]
  Furthermore, since $K_R \to K$ pointwise as $R\to\infty$, $|K_R| \leq |K|$ for all $R>0$, the domain of~$\cE$ is~$\mP_2(\Rd)$ and $\rho \in \mP_2(\Rd)$, we also have
  \[
    \cE_R(\rho) \to \cE(\rho) \quad \text{as $R\to\infty$}
  \]
  by the Lebesgue dominated convergence theorem. Similarly, we also have
  \[
    \cE_R(\rho^n) \to \cE(\rho^n) \quad \mbox{as $R\to\infty$ for all $n\in\N$}.
  \]
  By a diagonal argument, we deduce the result.
\end{proof}
\subsection{Identification of the gradient in Finsler geometry}\label{subsec:HeuristicFinsler}
Since the nonlocal upwind transportation cost $\mathcal{T}$ is only a quasi-metric, the underlying structure of $\mP_2(\Rd)$ does not have the formal Riemannian structure as it does in the classical gradient flow theory, but a Finslerian structure instead. This highlights the fact that at every point $\rho \in \mP_2(\Rd)$ the tangent space $T_\rho\cP_2(\Rd)$ is not a Euclidean space, but rather a manifold in its own right. 

In this section we provide calculations, in the spirit of Otto's calculus, that characterize the gradient descent in the infinite-dimensional Finsler manifold of probability measures endowed with the nonlocal transportation quasi-metric $\mathcal{T}$. To keep the following considerations simple, we assume that $\rho$ is a given probability measure which is absolutely continuous with respect to $\mu$.
In this way, we avoid the need to introduce yet another measure $\lambda\in \cM^+(G)$ with respect to which all of the occurring measures are absolutely continuous, similar to how we proceeded in Definition~\ref{def:action} for the action. This restriction is done solely to make the presentation clearer and highlight the geometric structure.
Hence any flux $\bj$ of interest is absolutely continuous with respect to $\mu \otimes \mu$ and we can think of $\bj$ via its density with respect to~$\mu \otimes \mu$, which we shall denote by $j$ (using a letter which is not bold).

At every tangent flux $\bj \in T_\rho\mP_2(\Rd)$ we define an inner product $g_{\rho,\bj}\colon T_\rho\mP_2(\Rd) \times T_\rho\mP_2(\Rd) \to \R$ by
\begin{equation}\label{eq:def:Finsler:g}
    g_{\rho,\bj}(\bj_1,\bj_2) = \frac{1}{2}\iint_G j_1(x,y)\,j_2(x,y)\, \eta(x,y) \left(\frac{\chi_{\{j>0\}}(x,y)}{\rho(x)} + \frac{\chi_{\{j<0\}}(x,y)}{\rho(y)} \right) \di \mu(x) \di \mu(y),
\end{equation}
where $\{j>0\}$ is an abbreviation for $\{(x,y) \in G \colon j(x,y)>0\}$ and similarly for $\{j<0\}$.
The ratios are well-defined since $\rho$ cannot be zero where $j$ is not zero. We note that this is the bilinear form that corresponds to the quadratic form defining the action (see Definition \ref{def:action} and Remark~\ref{rem:action:abs_cont}); namely,
\[    
    g_{\rho,\bj}(\bj,\bj)  = \cA(\mu; \rho, \bj).
\]
We refer the reader to Appendix \ref{app:minkowski} for a derivation of this inner product from a Minkowski norm on~$T_\rho\mP_2(\Rd)$ as it is required in Finsler geometry. We recall that from Proposition \ref{prop:tangent-bundle} a dense subset of tangent-fluxes $\bj$ are characterized by the existence of
a potential $\varphi \in C_\mathrm{c}^\infty(\Rd)$ such that, for $\mu \otimes\mu$-a.e.~$(x,y) \in G$,
\begin{equation} \label{eq:tan_flux_vec}
    j(x,y) = \dgrad \varphi(x,y) \left(\rho(x) \chi_{\{\dgrad \varphi>0\}}(x,y) + \rho(y) \chi_{\{\dgrad \varphi<0\}}(x,y) \right).
\end{equation}
In this Finsler setting, we now want to determine the direction of steepest descent from $\rho$, for the underlying energy defined in \eqref{eq:inter-en-functional}. The gradient vector of some energy $\cE\colon \cP(\R^d)\to \R$ at $\rho$, which we denote by $\grad \cE(\rho)$, is defined as the tangent vector which satisfies
\[ 
    \Diff_\rho \cE[\bj] = g_{\rho,\grad \cE(\rho)}\bigl(\grad \cE(\rho), \bj\bigr) \qquad \text{for all $\bj \in T_\rho\mP_2(\Rd)$},
\]
provided this vector exists and is unique. Here, we use the continuity equation Definition~\ref{def:nce-flux-form} to define variations via 
\[  
    \Diff_{\rho}\cE[\bj] = \left.\frac{\di}{\di t}\right|_{t=0} \cE(\tilde \rho_t), 
\]
where $\tilde \rho$ is any curve such that $\tilde \rho_0= \rho$ and $\left.\frac{\di}{\di t}\right|_{t=0}\tilde \rho_t = - \dgrad \cdot \bj$. From Definition \ref{def:nl_grad_div}, due to $\mu\otimes\mu$-absolute continuity of $\bj$ we have that 
\[ 
    -\dgrad \cdot \bj(x) = -\int \eta(x,y) j(x,y) \di{\mu}(y) \qquad \text{for $\mu$-a.e.\ $x \in \Rd$}.
\]
In the case, when $\cM$ is a finite-dimensional Finsler manifold, such gradient vector exists and is unique since the mapping $\ell\colon T_\rho \cM \to (T_{\rho}\cM)^*,\, \bj \mapsto g_{\rho,\bj}(\bj,\cdot)$, is a bijection; see \cite[Proposition 1.9]{Dahl}. For further details into Finsler geometry, we refer the reader to \cite{Shen,Chern}. In our case, we can at least claim that the functional $\ell_\rho\colon T_\rho\mP_2(\Rd) \to (T_\rho\mP_2(\Rd))^*$, given for $\bj\in T_\rho\mP_2(\Rd)$ by
\begin{equation}\label{eq:def:ell}
    \bj_2 \mapsto \ell_\rho(\bj)(\bj_2) = g_{\rho,\bj}(\bj,\bj_2) = \frac12\iint_G j_2(x,y)  \, \eta(x,y) \left(\frac{j(x,y)_+}{\rho(x)}-\frac{j(x,y)_-}{\rho(y)} \right) \di \mu(x) \di \mu(y) ,
\end{equation}
is injective $\eta\, \mu \otimes \mu$-a.e.; that is, the existence of a gradient implies its uniqueness ($\eta\, \mu \otimes \mu$-a.e.), in which case we have
\begin{equation*}
       \ell_\rho(\grad \cE(\rho)) = \Diff_\rho \cE. 
\end{equation*}
To see the injectivity of~\eqref{eq:def:ell}, we first note that $\ell_\rho$ is positively $1$-homogeneous by definition. Moreover, we have the following one-sided version of a Cauchy--Schwarz-type estimate
\begin{align}
  \ell_\rho(\bj)(\bj_2) &\leq \frac12\iint_G \!\left(\frac{ j_2(x,y)_+ j(x,y)_+}{\rho(x)} + \frac{ j_2(x,y)_- j(x,y)_-}{\rho(y)} \right) \eta(x,y) \di\mu(x) \di\mu(y) \nonumber\\
  &\leq \sqrt{\ell_\rho(\bj)(\bj) \, \ell_\rho(\bj_2)(\bj_2)}. \label{eq:Finsler:g:CSI}
\end{align}
Here, we also used that $\sqrt{ab}+\sqrt{cd}\leq \sqrt{(a+c)(b+d)}$ for all $a,b,c,d>0$.
Note that the above inequalities become strict if any of the integrands $j_2(x,y)_+ j(x,y)_-$ or $j_2(x,y)_- j(x,y)_+$ have a contribution. In particular, we could have $\ell_\rho(\bj)(\bj_2)=-\infty$ although the right-hand side is finite. Despite this, we still have equality in~\eqref{eq:Finsler:g:CSI} if and only if $\bj_2 = \beta \bj_1$ $\eta\, \mu \otimes \mu$-a.e.\ for some $\beta \geq 0$.  

To prove the injectivity of $\ell_\rho$, let us suppose that $\bj_1, \bj_2 \in T_\rho\mP_2(\Rd)$ are so that $\ell_\rho(\bj_1) = \ell_\rho(\bj_2)$. If $\bj_1 = 0$ or $\bj_2 = 0$ $\eta\, \mu \otimes \mu$-a.e., then $\ell_\rho(\bj_1) = \ell_\rho(\bj_2)$ implies that $\bj_1 = \bj_2 = 0$. If both $\bj_1$ and $\bj_2$ are nonzero, then by the above Cauchy--Schwarz inequality we get
\begin{equation*}
    0<g_{\rho,\bj_2}(\bj_2,\bj_2) = \ell_\rho(\bj_2)(\bj_2) = \ell_\rho(\bj_1)(\bj_2) = g_{\rho,\bj_1}(\bj_1,\bj_2) \leq \sqrt{ g_{\rho,\bj_1}(\bj_1,\bj_1)g_{\rho,\bj_2}(\bj_2,\bj_2) },
\end{equation*}
which, after dividing by $\sqrt{g_{\rho,\bj_2}(\bj_2,\bj_2)}$ yields $g_{\rho,\bj_2}(\bj_2,\bj_2) \leq g_{\rho,\bj_1}(\bj_1,\bj_1)$. Similarly, one gets $g_{\rho,\bj_1}(\bj_1,\bj_1) \leq g_{\rho,\bj_2}(\bj_2,\bj_2)$, from which we get
\begin{equation}
   g_{\rho,\bj_1}(\bj_1,\bj_1) = g_{\rho,\bj_2}(\bj_2,\bj_2).
\end{equation}
Hence
\begin{equation*}
    g_{\rho,\bj_1}(\bj_1,\bj_2) = \ell_\rho(\bj_1)(\bj_2) = \ell_\rho(\bj_2)(\bj_2) = g_{\rho,\bj_2}(\bj_2,\bj_2) = \sqrt{g_{\rho,\bj_1}(\bj_1,\bj_1)g_{\rho,\bj_2}(\bj_2,\bj_2)},
\end{equation*}
which is the equality case in the Cauchy--Schwarz inequality. Therefore, there exists $\beta\geq0$ such that $\bj_2 = \beta \bj_1$. By positive $1$-homogeneity of $\ell_\rho$ we get $\ell_\rho(\bj_2) = \ell_\rho(\beta \bj_1) = \beta \ell_\rho(\bj_1) = \beta \ell_\rho(\bj_2)$, so that $\beta = 1$, since $\ell_\rho(\bj_2)(\bj_2) \neq 0$. This ends the proof of the claim of injectivity of $\ell_\rho$.
\medskip

The direction of the steepest descent on Finsler manifolds is in general not $-\grad \cE(\rho)$, but is defined to be the tangent flux, which we denote by $\grad^- \cE(\rho)$, such that
\begin{equation} \label{eq:diff_gradd}
    -\Diff_\rho \cE[\bj] = g_{\rho,\grad^- \cE(\rho)\,}(\grad^- \cE(\rho), \bj) \qquad \text{for all $\bj \in T_\rho\mP_2(\Rd)$}.
\end{equation} 
In other words, we define $\grad^- \cE(\rho)$ as the tangent vector (provided it exists) such that
\begin{equation}\label{eq:ident:grad-}
    \ell_\rho(\grad^- \cE(\rho)) = -\Diff_\rho \cE.
\end{equation}
Here we clearly see that in general $\grad^- \cE(\rho) \neq -\grad \cE(\rho)$ since $\ell_\rho$ is not negatively $1$-homogeneous. We can justify that $\grad^- \cE(\rho)$ indeed corresponds to the direction of steepest descent at $\rho$ via the following criterion, which is analogous to the Riemann case. We first note that if $\Diff_\rho \cE = 0$ then 
$\grad^- \cE(\rho)=0$. If $\Diff_\rho \cE \neq 0$ we note that minimizers $\bj^*$ of
\begin{equation*}
   \bj \mapsto   \Diff_\rho \cE[\bj], \qquad \text{with the constraint that $g_{\rho,\bj}(\bj,\bj) = 1$},
\end{equation*}
are of the form $\bj^* = \beta \grad^- \cE(\rho)$ for some $\beta >0$. Indeed, using the fact that $\left.\frac{\di}{\di s}\right|_{s=0}g_{\rho,\bj + s\bj_1}(\bj +s\bj_1,\bj + s\bj_1) = 2g_{\rho,\bj}(\bj,\bj_1)$ for all $\bj,\bj_1\in T_\rho\mP_2(\Rd)$ (as shown in~\eqref{e:Minkowski:1st} of Appendix \ref{app:minkowski}) and using the Lagrange multiplier $\beta$ and the functional
\begin{equation*}
    H(\beta,\bj) := \Diff_\rho \cE[\bj] + \tfrac{\beta}{2} (g_{\rho,\bj}(\bj,\bj) -1), \qquad \bj \in T_\rho\mP_2(\Rd),\quad \beta \in \R,
\end{equation*}
yields, for a constrained minimizer $\bj^*$, the condition
\begin{equation}\label{eq:min-H}
   \Diff_\rho \cE = - \beta^* g_{\rho,\bj^*}(\bj^*,\cdot) = - \beta^* \ell_\rho(\bj^*). 
\end{equation}
By the definition of $\bj^*$ we have  $0> \Diff_\rho \cE[\bj^*]  = - \beta^* g_{\rho,\bj^*}(\bj^*,\bj^*)$, which implies  that $\beta^*>0$.
By injectivity and positive $1$-homogeneity of $\ell_\rho$, we get 
\begin{equation*}
    \bj^* = \ell_\rho^{-1}\left(-\frac{1}{\beta^*}\Diff_\rho \cE\right) = \frac{1}{\beta^*} \ell_\rho^{-1}(-\Diff_\rho \cE) = \frac{1}{\beta^*} \grad^- \cE(\rho).
\end{equation*}
The gradient flows with respect to $\cE$ in the Finsler space $(\mP_2(\Rd),\mathcal{T})$ can thus be written
\begin{equation}\label{eq:grad-descent-Finsler}
    \partial_t \rho_t = \dgrad \cdot \grad^- \cE(\rho).
\end{equation}
These considerations stay valid for general energy functionals $\cE\colon \mP_2(\Rd)\to \R$.

Let us compute the gradient flux for the specific case of the interaction energy \eqref{eq:inter-en-functional}. A direct computation using the symmetry of $K$ and Definition \ref{def:nl_grad_div} gives, for all $\bj \in T_\rho\mP_2(\Rd)$,
\begin{align*}
    &-\Diff_\rho \cE[\bj]
    = \frac12\iint_G \bigl(-\dgrad(K*\rho)\bigr)(x,y) \, \eta(x,y) \, j(x,y) \di\mu(x)\di\mu(y) \\
    &\phantom{=}= \frac12\iint_G j(x,y) \, \eta(x,y) \left( \frac{\rho(x) \bigl(-\dgrad(K*\rho)\bigr)_+(x,y)}{\rho(x)} -  \frac{\rho(y) \bigl(-\dgrad(K*\rho)\bigr)_-(x,y)}{\rho(y)} \right)  \di\mu(x)\di\mu(y)  \\
    &\phantom{=}=\frac12\iint_G j(x,y) \, \eta(x,y)\\
    &\phantom{==\frac12\iint_G}\bigl(-\dgrad (K*\rho)(x,y)\bigr) \left( \frac{\rho(x) \chi_{\{-\dgrad K*\rho>0)\}} (x,y)}{\rho(x)} + \frac{\rho(y) \chi_{\{-\dgrad K*\rho<0\}}(x,y)}{\rho(y)} \right) \di\mu(x)\di\mu(y)  \\
    &\phantom{=}= \ell_{\rho}\bigl(\grad^- \cE(\rho)\bigr)(\bj) ,
\end{align*}
where by comparison with~\eqref{eq:def:ell}, we observe that $\grad^- \cE(\rho)$ is given for $\mu \otimes\mu$-a.e.\ $(x,y) \in G$ by
\begin{equation}\label{eq:ident:grad-E}
    \grad^- \cE(\rho)(x,y) = -\dgrad(K*\rho)(x,y) \left(\rho(x)\chi_{\{-\dgrad K*\rho>0\}}(x,y) + \rho(y)\chi_{\{-\dgrad K*\rho<0\}}(x,y) \right) . 
\end{equation}
This shows by~\eqref{eq:ident:grad-} the existence and by our previous argument also uniqueness of $\grad^- \cE(\rho)$. It is easily observed that it has exactly the form \eqref{eq:tan_flux_vec} with the corresponding potential given by $\varphi = -K*\rho$.

We conclude this section by mentioning that the Finsler gradient flow structure of differential equations has been discovered and investigated in other systems; see \cite{Agueh, OhtaSturm09,OhtaSturm12}.
\subsection{Variational characterization for the nonlocal nonlocal-interaction equation} \label{sec:DeGiorgi}
Section \ref{subsec:HeuristicFinsler} shows that the nonlocal nonlocal-interaction equation \eqref{eq:nlnl-interaction-eq} can in fact be written as the gradient descent of the energy $\cE$ according to the Finsler gradient operator; see \eqref{eq:grad-descent-Finsler} and \eqref{eq:ident:grad-E}. This is why we refer to weak solutions of \eqref{eq:nlnl-interaction-eq} as gradient flows.

In this section we consider $(\mP_2(\Rd),\cT)$ as a quasi-metric space rather than a Finsler manifold, which allows us to prove rigorous statements more easily.
In particular, we show that the weak solutions of \eqref{eq:nlnl-interaction-eq} are 
curves of maximal slope for the energy \eqref{eq:inter-en-functional} in the 
quasi-metric space $(\mP_2(\Rd),\cT)$ and vice versa. We then establish the existence and stability of gradient flows using the variational framework of curves of maximal slope. 
To develop the variational formulation, we adapt the approach of \cite{AGS} to curves of maximal slope in metric spaces to the quasi-metric space $(\mP_2(\Rd),\cT)$.
This requires introducing a \textit{one-sided} version of the usual concepts from \cite{AGS} to cope with the asymmetry of the quasi-metric~$\cT$.
\begin{definition}[One-sided strong upper gradient]\label{def:strong_upper_gradient}
A function $h\colon\mP_2(\Rd)\to[0,\infty]$ is a \emph{one-sided strong upper gradient} for $\cE$ if for every $\rho\in \AC([0,T];(\mP_2(\Rd),\cT))$ the function $h\circ\rho$ is Borel and
\begin{equation}\label{eq:def-strong-upper-gr}
\cE(\rho_t)-\cE(\rho_s) \ge - \int_s^th(\rho_\tau)|\rho_\tau'|\di \tau \quad \mbox{\; for all $0\le s\le t\le T$},
\end{equation}
where $|\rho'|$ is the metric derivative of $\rho$ as defined in~\eqref{e:def:metric-derivative}. 
\end{definition} The above one-sided definition is sufficient to characterize the
curves of maximal slope:
\begin{definition}[Curve of maximal slope]\label{defn:cms}
    A curve $\rho \in \AC([0,T];\mP_2(\Rd))$ is a \emph{curve of maximal slope} for $\cE$ with respect to its one-sided strong upper gradient $h$ if and only if $t\mapsto \cE(\rho_t)$ is non-increasing and
\begin{equation}\label{eq:inequality_maximal_slope}
	\cE(\rho_t)-\cE(\rho_s)+\frac{1}{2}\int_s^t \Bigl( h(\rho_\tau)^2+|\rho_\tau'|^2 \Bigr) \di \tau \le 0 \quad \mbox{for all\; $0\le s\le t\le T$}.
\end{equation}
\end{definition}
\begin{rem}
Note that by using Young's inequality in \eqref{eq:def-strong-upper-gr}, we get
\[
\cE(\rho_t)-\cE(\rho_s)+\frac{1}{2}\int_s^t \Bigl( h(\rho_\tau)^2+|\rho_\tau'|^2 \Bigr) \di \tau \ge0 \quad \mbox{\; for all $0\le s\le t\le T$}.
\]
Hence, if the curve $(\rho_t)_{t\in[0.T]}$ is a curve of maximal slope for $\cE$ with respect to its strong upper gradient $h$, we actually have an equality in \eqref{eq:inequality_maximal_slope}.
\end{rem}
Therefore, in order to give a variational characterization of \eqref{eq:nlnl-interaction-eq} we need to detect the right one-sided strong upper gradient. As showed in \cite{ErbarBoltz}, the variation of the energy along the solution to the equation provides the suitable candidate. In the following we clarify this point as well as the strategy.

We recall that Proposition~\ref{prop:metric-velocity} ensures that for any $\rho \in \AC\bigl([0,T]; (\cP_2(\Rd),\cT)\bigr)$ there exists a unique flux $(\bj_t)_{t\in[0,T]}$ in $T_\rho\mP_2(\Rd)$ such that $\int_0^T\cA(\rho_t,\bj_t)\,\di{t}<\infty$, $(\rho,\bj)\in \CE_T$ and $|\rho_t'|^2=\cA(\rho,\bj_t)$ for a.e.\ $t\in[0,T]$. Moreover, according to Lemma~\ref{lem:action} there exists an antisymmetric measurable vector field $w\colon [0,T]\times G \to \R$ such that
\begin{equation}\label{eq:rel:j:w}
  \di\bj_t(x,y) = w_t(x,y)_+ \di\gamma_{1,t}(x,y) - w_t(x,y)_- \di\gamma_{2,t}(x,y).
\end{equation}
It will be convenient to work directly with this vector field $(w_t)_{t\in[0,T]}$: from now on we write $(\rho,w)\in \CE_T$ for $(\rho,\bj)\in \CE_T$ as well as $\widehat\cA(\rho_t,w_t)$ for $\cA(\rho_t,\bj_t)$ according to~\eqref{eq:action:vas}. With this convention, we can define a Finsler-type product on velocities in analogy to ~\eqref{eq:def:Finsler:g} as
\begin{equation}\label{eq:Finsler:g:velocities}
    \widehat g_{\rho,w}(u,v) = \frac12\iint_G u(x,y)\,v(x,y)\, \eta(x,y) \bra[\big]{\chi_{\{w>0\}}(x,y)\di\gamma_1(x,y)  + \chi_{\{w<0\}}(x,y) \di\gamma_2(x,y)}. 
\end{equation}
Note that, under the absolute-continuity assumptions of Section~\ref{subsec:HeuristicFinsler}, by comparing with~\eqref{eq:def:Finsler:g} we have that $\widehat g_{\rho,w}(u,v)= g_{\rho,\bj}(\bj_1,\bj_2)$, where $\bj_1,\bj_2$ are obtained from $u,v$ by~\eqref{eq:rel:j:w}, respectively. Moreover, taking \eqref{eq:def:ell} into account, we also define
\begin{equation}\label{eq:Finsler:ell:velocities}
  \widehat\ell_{\rho}(w)(v) = \widehat g_{\rho,w}(w,v) .
\end{equation}
Arguing as in~\eqref{eq:Finsler:g:CSI}, we arrive at the following one-sided Cauuchy--Schwarz inequality.
\begin{lemma}[One-sided Cauchy--Schwarz inequality]\label{lem:g:CSI}
 For all $v,w \in T_\rho \cP_2(\Rd)$ it holds
 \begin{equation}\label{eq:g:one-side:CauchySchwarz}
  \widehat g_{\rho,w}(w,v) \leq \sqrt{ \widehat g_{\rho,v}(v,v) \, \widehat g_{\rho,w}(w,w)},
 \end{equation}
 with equality if and only if, for some $\lambda>0$, $v(x,y)_+= \lambda w(x,y)_+$ for $\eta \, \rho\otimes\mu$-a.e.\ $(x,y)\in G$ (and thus, by antisymmetry, also $v(x,y)_-= \lambda w(x,y)_-$ for $\eta \, \mu\otimes\rho$-a.e.\ $(x,y)\in G$). 
\end{lemma}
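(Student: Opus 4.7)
\textbf{Proof plan for Lemma \ref{lem:g:CSI}.}

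The plan is to decompose $\widehat g_{\rho,w}(w,v)$ on $\{w>0\}$ and $\{w<0\}$, discard the non-positive cross terms $w_+ v_-$ and $w_- v_+$, and then apply Cauchy--Schwarz twice: once on each integral, and once in the elementary vector form $ac+bd\le\sqrt{a^2+b^2}\sqrt{c^2+d^2}$. Concretely, since on $\{w>0\}$ one has $w=w_+$ and on $\{w<0\}$ one has $w=-w_-$, and since $v=v_+-v_-$ in both cases, I first expand
\begin{equation*}
 \widehat g_{\rho,w}(w,v)=\frac12\iint_{\{w>0\}}\!\!\bigl(w_+ v_+-w_+v_-\bigr)\eta\di\gamma_1+\frac12\iint_{\{w<0\}}\!\!\bigl(w_- v_--w_- v_+\bigr)\eta\di\gamma_2.
\end{equation*}
Since $w_+v_-\ge 0$ and $w_- v_+\ge 0$, and since $w_+$ vanishes off $\{w>0\}$ and $w_-$ vanishes off $\{w<0\}$, the indicator functions may be dropped, yielding
\begin{equation*}
 \widehat g_{\rho,w}(w,v)\le \frac12\iint_G w_+ v_+ \,\eta\di\gamma_1+\frac12\iint_G w_- v_-\,\eta\di\gamma_2.
\end{equation*}

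Next, I apply the scalar Cauchy--Schwarz inequality on each of the two integrals with the measures $\eta\di\gamma_1$ and $\eta\di\gamma_2$, and I set
\begin{equation*}
 a=\Bigl(\tfrac12\iint_G w_+^2\eta\di\gamma_1\Bigr)^{\!1/2}, \quad b=\Bigl(\tfrac12\iint_G w_-^2\eta\di\gamma_2\Bigr)^{\!1/2},\quad c=\Bigl(\tfrac12\iint_G v_+^2\eta\di\gamma_1\Bigr)^{\!1/2},\quad d=\Bigl(\tfrac12\iint_G v_-^2\eta\di\gamma_2\Bigr)^{\!1/2}.
\end{equation*}
Observing from the definition \eqref{eq:Finsler:g:velocities} that $\widehat g_{\rho,w}(w,w)=a^2+b^2$ and $\widehat g_{\rho,v}(v,v)=c^2+d^2$, the elementary vector Cauchy--Schwarz inequality $ac+bd\le\sqrt{a^2+b^2}\sqrt{c^2+d^2}$ then gives \eqref{eq:g:one-side:CauchySchwarz}.

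For the equality case I will trace back through the three inequalities. Equality in the first step forces $w_+ v_-=0$ $\eta\,\gamma_1$-a.e. and $w_- v_+=0$ $\eta\,\gamma_2$-a.e., i.e.\ $v_+$ is supported in $\{w>0\}$ up to $\eta\,\gamma_1$-null sets (and symmetrically for $v_-$ via antisymmetry). Equality in the two scalar Cauchy--Schwarz inequalities requires $v_+=\lambda_1 w_+$ $\eta\,\gamma_1$-a.e.\ and $v_-=\lambda_2 w_-$ $\eta\,\gamma_2$-a.e.\ for some $\lambda_1,\lambda_2\ge 0$. By antisymmetry of $v$ and $w$, swapping $(x,y)\mapsto(y,x)$ turns $v_-=\lambda_2 w_-$ into $v_+=\lambda_2 w_+$ $\eta\,\gamma_1$-a.e., so $\lambda_1=\lambda_2=:\lambda$. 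Finally, equality in the vector step is automatic since the vectors $(a,b)$ and $(c,d)$ are already parallel with ratio $\lambda$. The assumption $\lambda>0$ in the statement excludes the degenerate case $v\equiv 0$. The main (minor) subtlety will be carefully handling null sets for the $\gamma_1$- and $\gamma_2$-supports and applying antisymmetry cleanly to identify the two proportionality constants.
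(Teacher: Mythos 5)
Your proposal is correct and follows essentially the same route as the paper's proof: expand $\widehat g_{\rho,w}(w,v)$ via $v=v_+-v_-$, drop the non-positive cross terms $w_+v_-$ and $w_-v_+$, and conclude by Cauchy--Schwarz on each of the two integrals combined with the elementary estimate $ac+bd\le\sqrt{a^2+b^2}\sqrt{c^2+d^2}$ (the paper's $\sqrt{ab}+\sqrt{cd}\le\sqrt{(a+c)(b+d)}$), with the equality case traced back through the same chain of inequalities. Your extra care in identifying the two proportionality constants via antisymmetry and transposing $\gamma_1$ into $\gamma_2$ is a fine, slightly more explicit rendering of the paper's remark that ``all the contributions are positive.''
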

\begin{proof}
Using $v=v_+-v_-$ and the usual Cauchy--Schwarz inequality in $L^2(\eta\,\rho\otimes\mu)$, we get
  \begin{align*}
    \widehat g_{\rho,w}(w,v) &= \frac12\iint_G v(x,y) \eta(x,y) \bigl( w(x,y)_+ \di\rho(x) \di\mu(y) - w(x,y)_- \di\mu(x) \di\rho(y)\bigr) \\
    &\leq \frac12\iint_G v(x,y)_+ w(x,y)_+ \eta(x,y) \di\rho(x) \di\mu(y) \\
    &\qquad + \frac12\iint_G v(x,y)_-w(x,y)_-\eta(x,y) \di\mu(x) \di\rho(y)\\
    &\leq \sqrt{ \widehat g_{\rho,v}(v,v) \, \widehat g_{\rho,w}(w,w)}.
  \end{align*}
From the usual Cauchy--Schwarz inequality we have equalities above if and only if there exists $\lambda > 0$ such that $v(x,y)_+=\lambda w(x,y)_+$ for $\eta\rho\otimes\mu$-a.e.\ $(x,y) \in G$ and $v(x,y)_-=\lambda w(x,y)_-$ for $\eta\mu\otimes\rho$-a.e.\ $(x,y)\in G$, since all the contributions are positive.
\end{proof}
Now note that, from the weak formulation of the nonlocal continuity equation~\eqref{eq:CE:weak:t01}, we have for any $\varphi \in C_\mathrm{c}^\infty(\Rd)$ and any $0\leq s < t \leq T$ the following chain rule:
\begin{align}
    \int_\Rd &\varphi(x) \di\rho_t(x)-\int_\Rd \varphi(x) \di\rho_s(x) =  \frac12\int_s^t\iint_G\dgrad\varphi(x,y)\,\eta(x,y) \di\bj_\tau(x,y)\di\tau \notag \\
    &= \frac12\int_s^t\iint_G\dgrad\varphi(x,y)\,\eta(x,y) \left( w_\tau(x,y)_+ \di\gamma_{1,\tau}(x,y) - w_\tau(x,y)_- \di\gamma_{2,\tau}(x,y) \right)\di \tau \notag \\
    &= \frac12\int_s^t \iint_G\dgrad\varphi(x,y)w_\tau(x,y)\,\eta(x,y) \left(\chi_{\{w>0\}}\di\gamma_{1,\tau}(x,y) + \chi_{\{w<0\}}\di\gamma_{2,\tau}(x,y)\right) \di \tau \notag \\
    &= \int_s^t \widehat g_{\rho_\tau,w_\tau}(w_\tau,\dgrad \varphi)\di \tau = \int_s^t \widehat\ell_{\rho}(w_\tau)(\dgrad\varphi) \di \tau. \label{eq:CE:Finsler}
\end{align}
Moreover, we still have the identification of the product $\widehat g$ with the action in the form of Lemma~\ref{lem:action}:
\begin{align}
    \widehat g_{\rho_t,w_t}(w_t,w_t) &= \frac12\iint_G w_t(x,y)^2\eta(x,y) \left( \chi_{\{w>0\}}(x,y)\di\gamma_{1,t}(x,y)  + \chi_{\{w<0\}}(x,y) \di\gamma_{2,t}(x,y )\right)  \notag \\
    &= \frac12\iint_G w_t(x,y)_+^2 \eta(x,y) \di\gamma_{1,t}(x,y)  +  \frac12\iint_G w_t(x,y)_-^2 \eta(x,y) \di\gamma_{2,t}(x,y)  \notag \\
    &= \frac12\iint_G \left( w_t(x,y)_+^2 + w_t(y,x)_-^2 \right)\eta(x,y) \di\gamma_{1,t}(x,y) =\hat\cA(\rho_t,w_t),  \label{eq:metric:action}
\end{align}
which shows that the action is the norm with respect to the Finsler structure.

A crucial step toward the variational characterization of \eqref{eq:nlnl-interaction-eq} mentioned above is to obtain the chain rule~\eqref{eq:CE:Finsler} for the energy functional~\eqref{eq:inter-en-functional}, which is done in Proposition~\ref{prop:chain-rule} below by a suitable regularization. As a consequence, by using the one-sided Cauchy--Schwarz inequality from Lemma~\ref{lem:g:CSI}, we obtain in Corollary \ref{cor:strongupper} that the square root $\sqrt{\cD}$ of the local slope, defined below in \eqref{eq:def:dissipation}, is a one-sided strong upper gradient for $\cE$ with respect to the quasi-metric~$\cT$ in the sense of Definition~\ref{def:strong_upper_gradient}, where $|\rho_t'|^2=\hat\cA(\rho_t,w_t)=\widehat g_{\rho_t,w_t}(w_t,w_t)$ for a.e.\ $t\in[0,T]$ due to Proposition \ref{prop:metric-velocity} and \eqref{eq:metric:action}. This allows us to define the De Giorgi functional, which provides the characterization of weak solutions as curves of maximal slope.
\begin{definition}[Local slope and De Giorgi functional]\label{defn:ls-Giorgi}
For any $\rho\in\mP_2(\Rd)$, let the \emph{local slope} at $\rho$ be given by
\begin{equation}\label{eq:def:dissipation}
    \cD(\rho) :=  \widehat g_{\rho,-\dgrad \frac{\delta \cE}{\delta \rho}}\bra*{-\dgrad \frac{\delta \cE}{\delta \rho},-\dgrad \frac{\delta \cE}{\delta \rho}}.
\end{equation}
For any $\rho \in \AC([0,T];(\mP_2(\Rd),\cT))$, the \emph{De Giorgi functional} at $\rho$ is defined as
\begin{equation}\label{eq:def:deGiorgi}
	\cG_T(\rho):=\cE(\rho_T)-\cE(\rho_0)+\frac{1}{2}\int_0^T\bra[\big]{\cD(\rho_\tau) + |\rho_\tau'|^2}\di \tau.
\end{equation}
When the dependence on the base measure $\mu$ needs to be explicit, the local slope and the De Giorgi functional are denoted by $\cD(\mu;\rho)$ and $\cG_T(\mu;\rho)$, respectively.
\end{definition}
If the potential $K$ satisfies Assumptions \ref{as:K:cont}--\ref{as:K:LipQuad}, we note that whenever $\rho$ is a weak solution to \eqref{eq:nlnl-interaction-eq} and $\rho\in \AC([0,T];\mP_2(\Rd))$ the quantity $\cG_T(\rho)$ is finite; indeed, the domain of the energy is all of $\mP_2(\Rd)$ and Proposition \ref{prop:metric-velocity} yields that both the local slope (since it is equal to the action of $(\rho,\bj)$, where $\bj$ is given in Definition \ref{defn:nl2ie}) and metric derivative are finite.

We are ready to state our main theorem.
\begin{theorem}\label{thm:CurveMaxSlope}
    Suppose that $\mu$ satisfies Assumptions \ref{it:as-conv} and \ref{it:as-tight} and $K$ satisfies Assumptions~\ref{as:K:cont}--\ref{as:K:LipQuad}. A curve $(\rho_t)_{t\in[0,T]} \subset \mP_2(\Rd)$ is a weak solution to \eqref{eq:nlnl-interaction-eq} according to Definition \ref{defn:nl2ie} if and only if $\rho$ belongs to $\AC([0,T];(\mP_2(\Rd),\cT))$ and is a curve of maximal slope for $\cE$ with respect to $\sqrt{\cD}$ in the sense of Definition \ref{defn:cms}, that is, satisfies
    \begin{equation}\label{eq:DeGiorgi_eq}
        \cG_T(\rho) = 0,
    \end{equation}
    where $\cG_T$ is the De Giorgi functional as given in Definition \ref{defn:ls-Giorgi}.
\end{theorem}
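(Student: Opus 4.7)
The plan is to prove both implications simultaneously by combining the chain rule (Proposition~\ref{prop:chain-rule}) with the one-sided Cauchy--Schwarz inequality of Lemma~\ref{lem:g:CSI}. First I would establish that $\sqrt{\cD}$ is a one-sided strong upper gradient for $\cE$ in the sense of Definition~\ref{def:strong_upper_gradient}. For any $\rho \in \AC([0,T];(\mP_2(\Rd),\cT))$, let $(w_\tau)_{\tau\in[0,T]}$ be the minimal velocity from Proposition~\ref{prop:metric-velocity}, so that $|\rho_\tau'|^2 = \widehat\cA(\rho_\tau, w_\tau) = \widehat g_{\rho_\tau,w_\tau}(w_\tau, w_\tau)$ for a.e.\ $\tau$. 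The chain rule together with the identity~\eqref{eq:CE:Finsler} gives
\[
    \cE(\rho_t) - \cE(\rho_s) = \int_s^t \widehat g_{\rho_\tau, w_\tau}\!\bigl(w_\tau,\, \dgrad\tfrac{\delta \cE}{\delta \rho_\tau}\bigr)\di\tau .
\]
Applying Lemma~\ref{lem:g:CSI} to $v = -\dgrad\tfrac{\delta \cE}{\delta\rho_\tau}$ yields $-(\cE(\rho_t) - \cE(\rho_s)) \le \int_s^t |\rho_\tau'| \sqrt{\cD(\rho_\tau)}\di\tau$, confirming the upper-gradient property; Young's inequality then produces $\cG_T(\rho)\ge 0$ for every absolutely continuous curve.

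For the direct implication, suppose $\rho$ is a weak solution of~\eqref{eq:nlnl-interaction-eq} per Definition~\ref{defn:nl2ie}. The prescribed flux $\bj_t$ corresponds via~\eqref{eq:rel:j:w} to the velocity $w_t = -\dgrad\tfrac{\delta\cE}{\delta\rho_t} = -\dgrad K{*}\rho_t$, and Lemma~\ref{lem:action} gives $\cA(\rho_t,\bj_t) = \cD(\rho_t)$. Assumption~\ref{as:K:LipQuad} combined with the uniform second-moment control of Lemma~\ref{lem:CE:tightness} ensures that $\int_0^T \cD(\rho_\tau)\di\tau <\infty$, so Proposition~\ref{prop:metric-velocity} yields $\rho \in \AC([0,T];(\mP_2(\Rd),\cT))$ with $|\rho_\tau'|^2 \le \cD(\rho_\tau)$. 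The chain rule then specializes to
\[
    \cE(\rho_T) - \cE(\rho_0) = \int_0^T \widehat g_{\rho_\tau, w_\tau}(w_\tau, -w_\tau)\di\tau = -\int_0^T \cD(\rho_\tau)\di\tau ,
\]
and combining this with $\cG_T(\rho)\ge 0$ forces $|\rho_\tau'|^2 = \cD(\rho_\tau)$ a.e.\ and $\cG_T(\rho)=0$.

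For the converse, assume $\rho \in \AC([0,T];(\mP_2(\Rd),\cT))$ satisfies $\cG_T(\rho)=0$. Chaining the strong-upper-gradient inequality with Young's inequality gives a string of inequalities whose terminal and initial values coincide, so every intermediate inequality is saturated for a.e.\ $\tau$. Young's equality yields $|\rho_\tau'|^2 = \cD(\rho_\tau)$, while the equality case of Lemma~\ref{lem:g:CSI} applied to the minimal velocity $w_\tau$ and $-\dgrad\tfrac{\delta\cE}{\delta\rho_\tau}$ produces a constant $\lambda_\tau \ge 0$ with $(w_\tau)_+ = \lambda_\tau (-\dgrad K{*}\rho_\tau)_+$ $\eta\,\gamma_{1,\tau}$-a.e. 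Matching the two Finsler norms forces $\lambda_\tau = 1$, and antisymmetry propagates the identity to the negative parts. Substituting the resulting $w_\tau$ into~\eqref{eq:rel:j:w} recovers precisely the flux of Definition~\ref{defn:nl2ie}; hence $\rho$ is a weak solution of~\eqref{eq:nlnl-interaction-eq}.

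The principal technical obstacle is the chain rule itself (Proposition~\ref{prop:chain-rule}), since the variation $\tfrac{\delta\cE}{\delta\rho_\tau} = K{*}\rho_\tau$ is neither smooth nor compactly supported and is thus not admissible in the weak formulation~\eqref{eq:nce-weak}. I would handle this by a two-step regularization (mollification of $K$ together with a spatial cutoff as in the proof of Lemma~\ref{lem:CE:tightness}), passing to the limit with the help of \ref{as:K:LipQuad}, the uniform moment bound of Lemma~\ref{lem:CE:tightness}, the uniform integrability~\ref{it:as-conv}, and narrow continuity of $\cE$ from Proposition~\ref{prop:cont-energy}. A related subtlety, needed to invoke Proposition~\ref{prop:metric-velocity} in the required form, is to check that $-\dgrad K{*}\rho_\tau$ lies in the closure~\eqref{eq:tangent-bundle} of nonlocal gradients of test functions in the appropriate weighted $L^2$-norm; this also follows from the same truncation-plus-mollification scheme.
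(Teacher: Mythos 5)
Your argument is correct and follows essentially the same route as the paper: the chain rule of Proposition~\ref{prop:chain-rule} together with the one-sided Cauchy--Schwarz inequality of Lemma~\ref{lem:g:CSI} gives the one-sided strong upper gradient property (the paper's Corollary~\ref{cor:strongupper}), the forward implication comes from identifying $\cA(\rho_t,\bj_t)=\cD(\rho_t)$ for the prescribed flux and invoking Proposition~\ref{prop:metric-velocity}, and the converse from saturating Young's inequality and the equality case of Lemma~\ref{lem:g:CSI}, exactly as in the paper's proof. One small correction: the finiteness of $\int_0^T\cD(\rho_\tau)\di\tau$ follows directly from \ref{as:K:LipQuad} and \ref{it:as-conv} (the paper's bound $\cD(\rho_t)\le L^2 C_\eta$), not from the second-moment control of Lemma~\ref{lem:CE:tightness}, which would in any case presuppose the very action bound you are trying to establish.
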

Note that in the above theorem, the implicit assumption that $\sqrt{\cD}$ is a one-sided strong upper gradient for $\cE$ is made; this is in fact true thanks to Corollary \ref{cor:strongupper} below. In light of this we can represent the result via the following diagram:
\[
\rho \text{ is a weak solution of } \eqref{eq:nlnl-interaction-eq} \!\!\iff\!\! \rho \text{ is a curve of maximal slope for } \cE  \text{ w.r.t. }\! \sqrt{\cD} \! \iff \! \cG_T(\rho)\! =\! 0.
\]
\subsection{The chain rule and proof of Theorem~\ref{thm:CurveMaxSlope}}
Firstly, we focus on the chain-rule property, which is the main technical step for proving Theorem~\ref{thm:CurveMaxSlope}.
\begin{proposition}\label{prop:chain-rule}
Let $K$ satisfy Assumptions~\ref{as:K:cont}--\ref{as:K:LipQuad}. For all $\rho\in \AC\bigl([0,T];(\mP_2(\Rd),\mt)\bigr)$ and $0\le s\le t\le T$ we have the chain-rule identity
  \begin{equation}\label{eq:chain_rule_int:vectorfield}
    \cE(\rho_t) - \cE(\rho_s) = \int_s^t \widehat g_{\rho_\tau,w_\tau}\bra*{w_\tau , \dgrad\frac{\delta \cE}{\delta\rho}(\rho_\tau)} \di\tau,
  \end{equation}
where $(w_t)_{t\in[0,T]}$ is the antisymmetric vector field associated by \eqref{e:bj:v} to $(\rho,\bj)\in \CE_T$.
\end{proposition}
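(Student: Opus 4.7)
The natural approach is to test the weak form~\eqref{eq:CE:weak:t01} of the nonlocal continuity equation against the time-dependent function $\varphi_\tau(x) := (K*\rho_\tau)(x) = \frac{\delta\cE}{\delta\rho}(\rho_\tau)(x)$ and to exploit the symmetry of $K$, after first replacing $K$ by a smooth, compactly supported approximation.

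\textbf{Step 1 (Approximation of $K$).} First I would fix a sequence of symmetric $K^n \in C_\mathrm{c}^\infty(\R^d\times\R^d)$ converging pointwise to $K$ while preserving, uniformly in $n$, the local Lipschitz property together with the at-most-quadratic growth from~\ref{as:K:LipQuad}. This can be achieved by standard mollification combined with a radial cutoff. Denote by $\cE_n$ the interaction energy associated to $K^n$; a dominated-convergence version of Proposition~\ref{prop:cont-energy} gives $\cE_n(\rho_\tau) \to \cE(\rho_\tau)$ pointwise in $\tau\in[0,T]$.

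\textbf{Step 2 (Chain rule for $K^n$).} For each $n$, the function $\varphi^n_\tau(x) := (K^n * \rho_\tau)(x)$ is bounded, smooth in $x$, and narrowly continuous in~$\tau$. An extension of~\eqref{eq:CE:weak:t01} to such time-dependent test functions (obtained by mollifying $\varphi^n$ in $\tau$ and passing to the limit) yields
\begin{equation*}
  \int \varphi^n_t \,\di\rho_t - \int \varphi^n_s \,\di\rho_s = \int_s^t \!\int \partial_\tau \varphi^n_\tau \,\di\rho_\tau\,\di\tau + \tfrac{1}{2} \int_s^t \iint_G \dgrad \varphi^n_\tau \,\eta\,\di\bj_\tau\,\di\tau.
\end{equation*}
By the symmetry of $K^n$, the left-hand side equals $2\bigl(\cE_n(\rho_t) - \cE_n(\rho_s)\bigr)$. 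Applying~\eqref{eq:CE:weak:t01} once more with the spatial test function $y\mapsto K^n(x,y)$ for each fixed $x$ computes
\begin{equation*}
  \partial_\tau\varphi^n_\tau(x) = \tfrac{1}{2}\iint_G [K^n(x,y_2)-K^n(x,y_1)]\,\eta(y_1,y_2)\,\di\bj_\tau(y_1,y_2),
\end{equation*}
and integrating in $\di\rho_\tau(x)$ and applying Fubini (permitted by compact support of $K^n$) turns the integrand into $\dgrad(K^n*\rho_\tau)$, so that the two terms on the right coincide. Dividing by two produces the chain rule
\begin{equation*}
  \cE_n(\rho_t) - \cE_n(\rho_s) = \tfrac{1}{2}\int_s^t\iint_G \dgrad(K^n*\rho_\tau)\,\eta\,\di\bj_\tau\,\di\tau.
\end{equation*}

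\textbf{Step 3 (Limit $n\to\infty$ and Finsler identification).} On the left, Step~1 gives convergence to $\cE(\rho_t)-\cE(\rho_s)$. On the right, the uniform bound $|\dgrad(K^n*\rho_\tau)(x,y)| \le L\bigl(|x-y|\vee|x-y|^2\bigr)$, which follows from the uniform Lipschitz/quadratic control on $K^n$, combined with Lemma~\ref{lem:A:TV:bound} (applied with $\Phi = |x-y|\vee|x-y|^2$), Assumption~\ref{it:as-conv}, the uniform second-moment bound of Lemma~\ref{lem:CE:tightness}, and the finite-action property $\int_0^T\cA(\rho_\tau,\bj_\tau)\,\di\tau<\infty$ granted by Proposition~\ref{prop:metric-velocity}, provides an integrable majorant. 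Dominated convergence then gives the limit $\tfrac{1}{2}\int_s^t\iint_G\dgrad(K*\rho_\tau)\,\eta\,\di\bj_\tau\,\di\tau$. The vector-field representation~\eqref{eq:rel:j:w} and identity~\eqref{eq:CE:Finsler} identify this with $\int_s^t \widehat g_{\rho_\tau,w_\tau}\bigl(w_\tau,\dgrad(K*\rho_\tau)\bigr)\,\di\tau$, and since $\dgrad\tfrac{\delta\cE}{\delta\rho}(\rho_\tau)=\dgrad(K*\rho_\tau)$ this is~\eqref{eq:chain_rule_int:vectorfield}. The main obstacle is the extension of~\eqref{eq:CE:weak:t01} to the $\rho$-dependent test function $\varphi^n_\tau = K^n*\rho_\tau$ in Step~2; the remedy is time-mollification, which yields test functions that are smooth in both variables with bounded time derivative (thanks to the compact support of $K^n$), after which passing to the vanishing-mollification limit is routine. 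Every other ingredient is either already in the paper or a standard dominated-convergence estimate.
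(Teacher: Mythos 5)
Your proposal is correct in outline, but it takes a genuinely different route from the paper's proof, so let me compare. The paper regularizes the \emph{curve}: it mollifies $(\rho,\bj)$ in time (controlling the action of the mollified pair via convexity of $\alpha$ and Jensen), differentiates the doubly regularized energy $\cE_R^\eps(\rho_t^\sigma)$ classically, and then passes to the limits $\sigma\to0$, $\eps\to0$, $R\to\infty$, which forces a spatial truncation argument with $\varphi_\delta$ and an appeal to Assumption~\ref{it:as-tight}, because weak convergence of the measures $\rho^\sigma\otimes\bj^\sigma$ is involved. You instead keep the curve fixed, regularize only $K$, and test the (extended) weak formulation with the $\rho$-dependent function $K^n*\rho_\tau$, using symmetry of $K^n$ and Fubini to identify $\int\partial_\tau\varphi^n_\tau\,\di\rho_\tau$ with the flux term; since in your $n\to\infty$ limit only the integrand changes while the measures $\bj_\tau\,\di\tau$ stay fixed, dominated convergence with the majorant $L\bigl(|x-y|\vee|x-y|^2\bigr)$ (via Lemma~\ref{lem:A:TV:bound} and \ref{it:as-conv}) suffices, and neither the truncation $\varphi_\delta$ nor \ref{it:as-tight} is needed in this proof. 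What your route buys is a cleaner limit passage; what it costs is that the real technical content is relocated into the extension of~\eqref{eq:CE:weak:t01} to test functions that are only absolutely continuous in time, which is exactly the role played by the paper's time-mollification of $(\rho,\bj)$ — your time-mollification of $\varphi^n$ is the dual version of the same device, and it does go through (the mollified test functions are jointly smooth, $\partial_\tau\varphi^n_\tau$ is controlled in $L^1(s,t;C_\mathrm{b})$ by $\sqrt{\cA(\rho_\tau,\bj_\tau)}$ via Corollary~\ref{cor:boundwithA}, and the limits are against fixed measures). Two small details you should make explicit: (i) the cutoff in Step~1 must be checked to preserve~\ref{as:K:LipQuad} with a constant uniform in $n$ (it does, possibly with a larger $L'$, because \ref{as:K:LipQuad} gives global Lipschitz control so $|K^\eps|\,|\nabla\varphi_R|\lesssim R\cdot R^{-1}$ stays bounded — the paper glosses over the same point); (ii) in the Fubini step the exceptional $\tau$-null set on which $\partial_\tau\varphi^n_\tau(x)$ exists depends on $x$, so you need joint measurability and the integrated (rather than pointwise-in-$\tau$) form of the identity, which is harmless but worth stating.
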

\begin{proof}
Since the curve $\rho\in \AC\bigl([0,T];(\mP_2(\Rd),\mt)\bigr)$, according to Proposition \ref{prop:metric-velocity} there exists a unique family $(\bj_t)_{t\in[0,T]}$ belonging to $T_{\rho}\mP_{2}(\Rd)$ for a.e.\ $t\in[0,T]$ such that:
\begin{enumerate}
    \item[(i)] $(\rho,\bj)\in \CE_T$;
    \item[(ii)] $\int_0^T\sqrt{\cA(\rho_t,\bj_t)}\di t<\infty$;
    \item[(iii)] $|\rho_t'|^2=\cA(\rho_t,\bj_t)$ for a.e.\ $t\in[0,T]$;
    \item[(iv)] $\di\bj_t(x,y) = w_t(x,y)_+ \di\gamma_{1,t}(x,y) - w_t(x,y)_- \di\gamma_{2,t}(x,y)$.
\end{enumerate}
Then the identity~\eqref{eq:chain_rule_int:vectorfield} is equivalent to proving
\begin{equation}\label{eq:chain_rule_int}
    \cE(\rho_t)-\cE(\rho_s)  = \frac12\int_s^t\iint_G\dgrad\frac{\delta \cE}{\delta\rho}(\rho_\tau)(x,y)\, \eta(x,y) \di\bj_\tau(x,y)\di \tau  .
\end{equation}
We proceed by applying two regularization procedures. First, for all $(x,y)\in \Rd\times\Rd$ we define $K^\varepsilon(x,y)=K*m_\varepsilon(x,y)=\iint_{\Rd\times\Rd} K(z,z')m_{\varepsilon}(x-z,y-z')\di z\di z'$, where $m_\varepsilon(z)=\frac{1}{\varepsilon^{2d}}m(\frac{z}{\varepsilon})$ for all $z\in\R^{2d}$ and $\varepsilon>0$, where $m$ is a standard mollifier on $\R^{2d}$.  We also introduce a smooth cut-off function $\varphi_R$ on $\Rdd$ such that $\varphi(z)=1$ on $B_R$, $\varphi(z)=0$ on $\Rdd\setminus B_{2R}$ and $|\nabla\varphi_R|\le\frac{2}{R}$, where $B_R$ is the  ball of radius $R$ in $\Rdd$ centered at the origin.   
We set $K_R^\varepsilon:=\varphi_R K^\varepsilon$ and note that it is a~$C_\mathrm{c}^\infty(\Rdd)$ function. We now introduce the approximate energies, indexed by $\eps$ and $R$,
\[
\cE_R^\varepsilon(\nu)=\frac{1}{2}\int_\Rd\int_{\Rd} K_R^\varepsilon(x,y)\di\nu(y)\di\nu(x) \quad \mbox{for all $\nu\in\mP_2(\Rd)$}.
\]
Let us extend $\rho$ and $\bj$ to $[-T,2 T]$ periodically in time, meaning that $\rho_{-s}=\rho_{T-s}$ and $\rho_{T+s}=\rho_{s}$ for all $s\in (0,T]$ and likewise for $\bj$. We regularize  $\rho$ and $\bj$ in time by using a standard mollifier $n$ on $\R$ supported on $[-1,1]$, by setting $n_\sigma(t)=\frac{1}{\sigma}n(\frac{t}{\sigma})$ and 
\begin{align*}
    &\rho_t^\sigma(A)=n_\sigma*\rho_t(A)=\int_{-\sigma}^\sigma n_\sigma(t-s)\rho_s(A)\di s, \qquad \forall A\subseteq\Rd,\\
    &\bj_t^\sigma(U)=n_\sigma*\bj_t(A)=\int_{-\sigma}^\sigma n_\sigma(t-s)\bj_s(U)\di s, \qquad \forall U\subset G,
\end{align*}
for any $\sigma\in(0,T)$; whence $\rho_t^\sigma\in\mP_2(\Rd)$. Let us now show that the integral of the action is uniformly bounded with respect to $\sigma$. Let $|\lambda| \in \cM^+(G)$ be such that $\gamma_{1,t},\gamma_{2,t},|\bj_t| \ll |\lambda|$ for all $t\in[0,T]$. Then by using the joint convexity of the function $\alpha$ from~\eqref{eq:def:alpha}, Jensen's inequality and Fubini's Theorem, we get
\begin{align*}
    \int_0^T\cA(\rho_t^\sigma,\bj_t^\sigma) \di t&= 
    \frac12\int_0^T \iint_G  \alpha\bra*{ \int_{-\sigma}^{\sigma} \pderiv{\bj_{t-s}}{|\lambda|} n_\sigma(s)\di s, \int_{-\sigma}^{\sigma} \pderiv{\gamma_{1,t-s}}{|\lambda|} n_\sigma(s)\di s  } \eta \di|\lambda| \di t \\
    &\quad +\frac12\int_0^T \iint_G \alpha\bra*{ - \int_{-\sigma}^{\sigma} \pderiv{\bj_{t-s}}{|\lambda|} n_\sigma(s)\di s, \int_{-\sigma}^{\sigma} \pderiv{\gamma_{2,t-s}}{|\lambda|} n_\sigma(s)\di s  }\eta \di|\lambda| \di t
    \\
    &\leq \frac12\int_0^T \iint_G \int_{-\sigma}^\sigma \bra*{\alpha\bra*{ \pderiv{\bj_{t-s}}{|\lambda|}, \pderiv{\gamma_{1,t-s}}{|\lambda|} }
    + \alpha\bra*{ - \pderiv{\bj_{t-s}}{|\lambda|}, \pderiv{\gamma_{2,t-s}}{|\lambda|} }} n_\sigma(s) \di s \, \eta \di|\lambda| \di t  \\
    &= \int_{-\sigma}^{+\sigma} \int_0^T \cA(\rho_{t-s},\bj_{t-s})  \di t\, n_\sigma(s)\di s\\
    &\leq \int_{-T}^{2T} \cA(\rho_{t},\bj_{t})  \di t = 3 \int_{0}^{T} \cA(\rho_{t},\bj_{t})  \di t<\infty.
\end{align*}
It is easy to check that $(\rho^\sigma,\bj^\sigma)$ is still a solution to the nonlocal continuity equation on $[0,T]$. By arguing as in the proof of Proposition \ref{prop:compactness-sol-ce}, we get that along subsequences it holds  $\rho_t^\sigma\rightharpoonup\tilde{\rho}_t$ as $\sigma \to 0$ for all $t\in[0,T]$ for some curve $(\tilde \rho_t)_{t\in[0,T]}$ in $\mP_2(\Rd)$, and $\bj^\sigma\rightharpoonup\hat{\bj}$ in $\cM_{\mathrm{loc}}(G \times[0,T])$.
with $\di\hat\bj := \di\tilde \bj_t\di t$, for some curve $(\tilde \bj_t)_{t\in[0,T]}$ in $\cM(G)$.
Note that $n_\sigma\rightharpoonup\delta_0$ as $\sigma\to0$, and, as a consequence, $\rho_t^\sigma\rightharpoonup\rho_t$ for all $t\in[0,T]$ in the view of Proposition \ref{prop:comparison_with_W_1}. Thus, we actually have $\tilde{\rho}=\rho$ and $\tilde{\bj}=\bj$ by uniqueness of the limit and the flux, as highlighted above.
Using the regularity for~$\eps>0$ and $\sigma>0$, we get
\begin{equation}
    \pderiv{}{t} \cE_R^\varepsilon(\rho_t^\sigma)=\int_\Rd (K_R^\varepsilon*\rho_t^\sigma)(x)\partial_t\rho_t^\sigma(x)\di\mu(x)=\frac12\iint_G\dgrad(K_R^\varepsilon*\rho_t^\sigma)(x,y)\,\eta(x,y)\di\bj_t^\sigma(x,y).
\end{equation}
For the sake of completeness, we note that the second equality follows from the definition of $\CE_T$ by using again a cut-off argument on the function $K_R^\varepsilon*\rho_t^\sigma$. We omit this step as it is a standard procedure. By integrating in time between $s$ and $t$, with $s\le t$, it follows
\begin{align}\label{eq:chain_rule_int_approx}
    \cE_R^\varepsilon(\rho_t^\sigma)-\cE_R^\varepsilon(\rho_s^\sigma)&=\frac12\int_s^t\iint_G\dgrad(K_R^\varepsilon*\rho_\tau^\sigma)(x,y)\, \eta(x,y) \di\bj_\tau^\sigma(x,y)\di \tau\\
    &=\frac12\int_{s}^{t}\iint_G \int_\Rd \bra*{ K_R^\varepsilon(y,z)-K_R^\varepsilon(x,z)} \di\rho_\tau^\sigma(z) \eta(x,y) \di\bj_\tau^\sigma(x,y)\di \tau. \nonumber 
\end{align}
In order to obtain \eqref{eq:chain_rule_int} we need to let $\varepsilon$ and $\sigma$ go to $0$ and $R$ go to $\infty$ in \eqref{eq:chain_rule_int_approx}. The left-hand side is easy to handle since $\rho_t^\sigma\rightharpoonup\rho_t$ as $\sigma\to0$ for any $t\in[0,T]$, and $K_R^\varepsilon\to K_R$ uniformly on compact sets as $\varepsilon\to0$. Finally, by letting $R$ go to $\infty$ we have convergence to $\cE(\rho_t)$. 

In order to pass to the limit in the right-hand side of \eqref{eq:chain_rule_int_approx}, we use a truncation argument similar to that in the proof of Proposition~\ref{prop:compactness-sol-ce}. Let $\delta>0$ and let us set $N_\delta = \overline{B}_{\delta^{-1}} \times \overline{B}_{\delta^{-1}}$, where $B_{\delta^{-1}}= \set*{x \in \R^d: |x|< \delta^{-1}}$, and $G_\delta=\set[\big]{(x,y)\in G:\delta\le|x-y|}$. We can consider a family $(\varphi_\delta)_{\delta>0} \subset C_\mathrm{c}^\infty(\Rd \times G;[0,1])$ of truncation functions such that, for all $\delta>0$,
\[
  \set{\varphi_\delta = 1} \supseteq \overline{B}_{\delta^{-1}} \times G_\delta\cap N_\delta.
\]
Now, we add and subtract $\varphi_\delta$ in the integral on the RHS of \eqref{eq:chain_rule_int_approx} and we argue as follows. Since $\rho_t^\sigma\otimes\bj_t^\sigma\rightharpoonup\rho_t\otimes\bj_t$ for any $t\in[0,T]$ as $\sigma\to0$, and $K^\varepsilon_R\to K_R$ uniformly on compact sets as $\varepsilon\to0$, we can pass to the limit in $\sigma$ and $\varepsilon$, for any $R$ and $\delta>0$:
\begin{align}\label{eq:first-limit}
  & \frac12\int_{s}^{t} \iint_G \int_\Rd \varphi_\delta(z,x,y) \bra*{ K_R^\varepsilon(y,z)-K_R^\varepsilon(x,z)} \di\rho_\tau^\sigma(z) \eta(x,y)\di\bj_\tau^\sigma(x,y)\di\tau \notag\\
  &\to  \frac12 \int_{s}^{t} \iint_G \int_\Rd \varphi_\delta(z,x,y) \bra*{ K_R(y,z)-K_R(x,z)} \di\rho_\tau(z) \eta(x,y) \di\bj_\tau(x,y)\di\tau. 
\end{align}
By using $\varphi_\delta\le1$, Assumption \ref{as:K:LipQuad}, Lemma \ref{lem:A:TV:bound} with $\Phi(x,y)=|x-y|\vee |x-y|^2$ and \ref{it:as-conv}, we can bound the modulus of ~\eqref{eq:first-limit} for any $\tau\in[s,t]$ by
\begin{align}\label{eq:chain_rule:action_bound}
  &\frac{1}{2}\iint_G \int_\Rd  \frac{\abs{K_R(y,z)-K_R(x,z)}}{|x-y|\vee |x-y|^2}\di\rho_t(z) \bra*{|x-y|\vee |x-y|^2} \eta(x,y) \di|\bj_t|(x,y)
  \leq  L \sqrt{2C_\eta \, \cA(\rho_t,\bj_t)}.
\end{align}
Hence the integral is uniformly bounded in $\delta$ and $R$, and by the Lebesgue dominated convergence theorem we can pass to the limit in \eqref{eq:first-limit} in $\delta$ and $R$, obtaining
\[
\frac12 \int_{s}^{t} \iint_G \int_\Rd \bra*{ K(y,z)-K(x,z)} \di\rho_\tau(z) \eta(x,y) \di\bj_\tau(x,y)\di\tau.
\]
Now, it remains to control the integral involving the term $1-\varphi_\delta(z,x,y)$ in the integrand. Let us note that, for all $\delta>0$,
\[
  \bra*{\Rd \times G} \setminus \set{\varphi_\delta =1} \subseteq \bra[\big]{\overline{B}_{\delta^{-1}}^\mathrm{c} \times G } \cup \bra[\big]{ \Rd \times ( G\setminus (G_\delta \cap N_\delta))} =:  M_\delta.
\]
Using Assumption \ref{as:K:LipQuad} and splitting each contribution, we obtain
\begin{align*}
  &\abs*{\iint_G \int_\Rd \bra*{1-\varphi_\delta(z,x,y)} \bra*{ K_R^\varepsilon(y,z)-K_R^\varepsilon(x,z)} \di\rho_t^\sigma(z) \eta(x,y) \di\bj_t^\sigma(x,y)} \\
  &\leq L \iiint_{M_\delta}\bra*{|x-y|\vee |x-y|^2} \eta(x,y)\di\bj_t^\sigma(x,y)\di\rho^\sigma_t(z) \\
  &\leq L \iiint_{\overline{B}_{\delta^{-1}}^\mathrm{c} \times G}\bra*{|x-y|\vee |x-y|^2} \eta(x,y)\di\bj_t^\sigma(x,y)\di\rho^\sigma_t(z)\\
  &\ \qquad + 2L \int_{\Rd}\di\rho^\sigma_t(z)\iint_{G_\delta^\mathrm{c}}\bra*{|x-y|\vee |x-y|^2}w_t(x,y)_+ \eta(x,y)\di\rho_t^\sigma(x)\di\mu(y)\\
  &\ \qquad + 2L \int_{\Rd}\di\rho^\sigma_t(z)\iint_{N_\delta^\mathrm{c}}\bra*{|x-y|\vee |x-y|^2}w_t(x,y)_+ \eta(x,y)\di\rho_t^\sigma(x)\di\mu(y).
\end{align*}
Using Lemma \ref{lem:A:TV:bound} with $\Phi(x,y)=|x-y|\vee |x-y|^2$, \ref{it:as-conv} and the Cauchy--Schwarz inequality with respect to $\eta\, \rho_t^\sigma\otimes\mu$, the right-hand side in the inequality above can be further bounded by
\begin{align*}
   &4L \sqrt{ C_\eta \cA(\rho_t^\sigma,\bj_t^\sigma)} \  \rho_t^\sigma\bra*{\overline{B}_{\delta^{-1}}^\mathrm{c}}\\
  & + 2L \sqrt{\cA(\rho_t^\sigma,\bj_t^\sigma)} 
  \bra*{ \bra*{ \iint_{G_\delta^\mathrm{c}} \abs*{x-y}^2 \eta(x,y) \di\rho_t^\sigma(x) \di\mu(y)}^{\frac{1}{2}} +  \sqrt{C_\eta \rho_t^{\sigma}\bra*{\overline{B}_{\delta^{-1}}^\mathrm{c}}}}.  
\end{align*}
Thanks to the uniform second moment bound of $\rho_t^\sigma$ from Lemma~\ref{lem:CE:tightness} and Assumption~\ref{it:as-tight}, the above terms converge to zero as $\delta\to 0$, which concludes the proof.
\end{proof}
That $\sqrt{\cD}$ is a one-sided strong upper gradient for $\cE$ is an easy consequence of the previous result:
\begin{corollary}\label{cor:strongupper}
    For any curve $\rho\in \AC([0,T];(\mP_2(\Rd),\mt))$ it holds
    \begin{equation}\label{eq:strongupper}
        \cE(\rho_t)-\cE(\rho_s) \geq - \int_s^t\sqrt{\cD(\rho_\tau)}\,|\rho_\tau'| \di \tau \quad \mbox{for all\;\; $0\le s\le t\le T$},
    \end{equation}
i.e., $\sqrt{\cD}$ is a one-sided strong upper gradient for $\cE$ in the sense of Definition~\ref{def:strong_upper_gradient}.
\end{corollary}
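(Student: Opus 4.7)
The plan is to deduce~\eqref{eq:strongupper} by combining the chain-rule identity~\eqref{eq:chain_rule_int:vectorfield} with the one-sided Cauchy--Schwarz inequality of Lemma~\ref{lem:g:CSI}, applied pointwise in~$\tau$. The key observation is that the right-hand side of~\eqref{eq:chain_rule_int:vectorfield} is linear in the second argument of $\widehat g_{\rho_\tau, w_\tau}$ (by definition~\eqref{eq:Finsler:g:velocities}), so a sign flip brings it exactly into the form to which Lemma~\ref{lem:g:CSI} applies.

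Concretely, I would fix $\rho \in \AC([0,T]; (\mP_2(\Rd),\cT))$ and let $(w_\tau)_{\tau\in[0,T]}$ be the antisymmetric vector field provided by Proposition~\ref{prop:metric-velocity}, so that $|\rho_\tau'|^2 = \widehat g_{\rho_\tau, w_\tau}(w_\tau, w_\tau)$ for a.e.~$\tau\in[0,T]$ via~\eqref{eq:metric:action}. Setting $v_\tau := -\dgrad\tfrac{\delta\cE}{\delta\rho}(\rho_\tau)$ and using linearity of the second slot of $\widehat g$, the chain rule~\eqref{eq:chain_rule_int:vectorfield} rewrites as
$$
    \cE(\rho_t)-\cE(\rho_s) \;=\; -\int_s^t \widehat g_{\rho_\tau, w_\tau}(w_\tau, v_\tau)\,\di\tau .
$$
Lemma~\ref{lem:g:CSI} then yields, for a.e.~$\tau$, the pointwise bound
$$
    \widehat g_{\rho_\tau, w_\tau}(w_\tau, v_\tau) \;\leq\; \sqrt{\widehat g_{\rho_\tau, v_\tau}(v_\tau, v_\tau)\,\widehat g_{\rho_\tau, w_\tau}(w_\tau, w_\tau)}\;=\; \sqrt{\cD(\rho_\tau)}\,|\rho_\tau'|,
$$
where the last equality invokes the definition~\eqref{eq:def:dissipation} of the local slope. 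Substituting this pointwise bound into the displayed chain-rule equality and integrating produces exactly~\eqref{eq:strongupper}.

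The only remaining piece of bookkeeping is to verify the Borel measurability of $\tau\mapsto \sqrt{\cD(\rho_\tau)}$ required by Definition~\ref{def:strong_upper_gradient}. I expect this to be routine: the narrow continuity of $\tau\mapsto\rho_\tau$ (built into the AC hypothesis through Lemma~\ref{lem:nce:weak_cont}) combined with the continuity of $K$ (Assumption~\ref{as:K:cont}) makes $\rho\mapsto \cD(\rho)$ measurable along the curve, and $\tau\mapsto|\rho_\tau'|$ is measurable by construction of the metric derivative. I do not anticipate any genuine obstacle here, as the corollary is essentially a one-line consequence of the chain rule and the one-sided Cauchy--Schwarz inequality, both of which are already in place.
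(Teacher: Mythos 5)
Your proposal is correct and follows essentially the same route as the paper: the chain rule of Proposition~\ref{prop:chain-rule}, a sign flip using linearity in the second slot of $\widehat g$, the one-sided Cauchy--Schwarz inequality of Lemma~\ref{lem:g:CSI}, and the identifications $\widehat g_{\rho_\tau,v_\tau}(v_\tau,v_\tau)=\cD(\rho_\tau)$ and $\widehat g_{\rho_\tau,w_\tau}(w_\tau,w_\tau)=\widehat\cA(\rho_\tau,w_\tau)=|\rho_\tau'|^2$ via~\eqref{eq:metric:action} and Proposition~\ref{prop:metric-velocity}. The paper's only extra remark is that one may assume $\int_s^t\sqrt{\cD(\rho_\tau)}\,|\rho_\tau'|\di\tau<\infty$, the inequality being trivial otherwise, which your argument implicitly covers.
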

\begin{proof}
Without loss of generality we assume $\int_s^t\sqrt{\cD(\rho_\tau)}|\rho'|(\tau)\di \tau<\infty$, as otherwise the inequality~\eqref{eq:strongupper} is trivially satisfied. We obtain the result as consequence of Proposition \ref{prop:chain-rule} by applying the one-sided Cauchy--Schwarz inequality (Lemma \ref{lem:g:CSI}) to \eqref{eq:chain_rule_int:vectorfield} as follows: for any $0\le s\le t\le T$,
\begin{align*}
    \cE(\rho_t) - \cE(\rho_s) &= \int_s^t \widehat g_{\rho_\tau,w_\tau}\bra*{w_\tau, \dgrad\frac{\delta \cE(\rho_\tau)}{\delta\rho}} \di\tau = - \int_s^t \widehat g_{\rho_\tau,w_\tau}\bra*{w_\tau,-\dgrad \frac{\delta \cE(\rho_\tau)}{\delta \rho}} \dx{\tau}\\
    &\ge-\int_s^t\sqrt{\widehat g_{\rho_\tau,-\dgrad \frac{\delta \cE(\rho_\tau)}{\delta\rho}}\left(-\dgrad \frac{\delta \cE(\rho_\tau)}{\delta\rho} ,-\dgrad \frac{\delta \cE(\rho_\tau)}{\delta\rho}\right)}\sqrt{\widehat g_{\rho_t,w_\tau}(w_\tau,w_\tau)}\,\di\tau\\
    &=\int_s^t\sqrt{\cD(\rho_\tau)}\,\sqrt{\hat\cA(\rho_\tau,w_t)} \di \tau \\
    &=\int_s^t\sqrt{\cD(\rho_\tau)}\,|\rho'|(\tau) \di \tau.
\end{align*}
Note that the last two equalities are provided by identity \eqref{eq:metric:action} and Proposition \ref{prop:metric-velocity}.
\end{proof}
At this point, we have collected all auxiliary results to deduce Theorem~\ref{thm:CurveMaxSlope}.
\begin{proof}[Proof of Theorem \ref{thm:CurveMaxSlope}]
Let us start by assuming that $\rho$ is a weak solution to \eqref{eq:nlnl-interaction-eq}. In view of Definition \ref{defn:nl2ie}, a weak solution is obtained from the weak formulation of the nonlocal continuity equation~\eqref{eq:nce-weak} if we set
\[
\di\bj_t(x,y)=\dgrad\frac{\delta \cE}{\delta\rho}(x,y)_- \di\rho_t(x)\di\mu(y)-\dgrad\frac{\delta \cE}{\delta\rho}(x,y)_+ \di\rho_t(y)\di\mu(x).
\]
Then, by writing $v_t^\cE(x,y)=-\dgrad\frac{\delta \cE}{\delta\rho}(x,y)$, it is easy to check
\[
  \cA(\rho_t,\bj_t)=\widehat\cA(\rho_t,v_t^\cE)=\cD(\rho_t) < \infty,
\]
where the finiteness follows from Assumptions~\ref{as:K:LipQuad} and~\ref{it:as-conv}, as shown by the computation
\begin{align*}
    \cD(\rho_t) &= \iint_G |(\dgrad K*\rho_t(x,y))_-|^2\eta(x,y)\di\rho_t(x)\di\mu(y)\\
    &\leq \iint_G (\dgrad K*\rho_t(x,y))^2\eta(x,y)\di\rho_t(x)\di\mu(y)\\
    &= \iint_G \left( \int_\Rd (K(x,z)-K(y,z))\di{\rho_t}(z) \right)^2 \eta(x,y)\di\rho_t(x)\di\mu(y)\\
    &\leq \iint_G \int_\Rd (K(x,z)-K(y,z))^2\di{\rho_t}(z) \eta(x,y)\di\rho_t(x)\di\mu(y)\\
    &\leq L^2 \int_\Rd \iint_G \left(|x-y|^2\vee|x-y|^4\right) \eta(x,y)\di\mu(y) \di\rho_t(x)\di{\rho_t}(z)\\
    &\leq L^2 C_\eta \int_\Rd \int_\Rd \di\rho_t(x)\di{\rho_t}(z) = L^2 C_\eta .
\end{align*}
Thanks to Proposition \ref{prop:metric-velocity}, this also proves that $\rho \in \AC([0,T];(\mP_2(\Rd),\cT))$ and $|\rho_t'|^2\le\cD(\rho_t)$ for a.e.\ $t\in[0,T]$.
In view of Proposition \ref{prop:chain-rule}, we thus obtain
\begin{align*}
\cE(\rho_t)-\cE(\rho_s)&= \int_s^t \widehat g_{\rho_\tau,v_\tau^\cE}\bra*{v_\tau^\cE,\dgrad\frac{\delta \cE}{\delta\rho}(\rho_\tau)} \di\tau 
= - \int_s^t \widehat g_{\rho_\tau,v_\tau^\cE}\bra*{v_\tau^\cE,-\dgrad\frac{\delta \cE}{\delta\rho}(\rho_\tau)} \di\tau \\
&=-\int_s^t\iint_G\left|\dgrad\frac{\delta \cE}{\delta\rho}(x,y)_-\right|^2\eta(x,y)\di\rho_\tau(x)\di\mu(y)\di\tau\\
&=-\int_s^t \cD(\rho_\tau)\di\tau\le-\int_s^t\sqrt{\cD(\rho_\tau)}|\rho'_\tau|\di\tau.
\end{align*}
This implies that:
\begin{itemize}
    \item[(i)] the map $t\mapsto \cE(\rho_t)$ is non-increasing;
    \item[(ii)] $\cE(\rho_t)-\cE(\rho_s)+\frac{1}{2}\int_s^t \cD(\rho_\tau)+|\rho_\tau'|^2\di \tau = 0$, by Corollary \ref{cor:strongupper}.
\end{itemize}
Whence the first part of the theorem follows for $s=0$ and $t=T$ since $\cG_T(\rho)=0$.

Consider now $\rho\in\AC([0,T];(\mP_2(\Rd),\cT))$ satisfying the equality \eqref{eq:DeGiorgi_eq}. Let us verify that it is a weak solution of~\eqref{eq:nlnl-interaction-eq}. By Proposition \ref{prop:metric-velocity} there exists a unique family $(\bj_t)_{t\in[0,T]}$ in $T_{\rho_t}\mP_2(\Rd)$ 
such that $(\rho,\bj)\in \CE_T$, $\int_0^T\sqrt{\cA(\rho_t,\bj_t)}\di t<\infty$ and $|\rho_t'|^2=\cA(\rho_t,\bj_t)$ for a.e.\ $t\in[0,T]$. Moreover, by Lemma~\ref{lem:action} we find an antisymmetric measurable vector field $w\colon [0,T]\times G \to \R$ such that
\begin{equation*}
  \di\bj_t(x,y) = w_t(x,y)_+ \di\gamma_{1,t}(x,y) - w_t(x,y)_- \di\gamma_{2,t}(x,y).
\end{equation*}
Thanks to Proposition \ref{prop:chain-rule}, by applying the one-sided Cauchy--Schwarz, using the identification~\eqref{eq:metric:action}, the definition of the local slope~\eqref{eq:def:dissipation} and Young inequality, we get
\begin{align*}
    \cE(\rho_T)-\cE(\rho_0)&=\int_0^T \widehat g_{\rho_\tau,w_\tau}\bra*{w_\tau , \dgrad\frac{\delta \cE}{\delta\rho}(\rho_\tau) } \di\tau = - \int_0^T \widehat g_{\rho_\tau,w_\tau}\bra*{w_\tau , -\dgrad\frac{\delta \cE}{\delta\rho}(\rho_\tau) } \di\tau \\ 
    &\ge-\int_0^T\sqrt{\cD(\rho_\tau)}\sqrt{\cA(\rho_\tau,\bj_\tau)}\di \tau =-\int_0^T\sqrt{\cD(\rho_\tau)}|\rho_\tau'|\di \tau\\&
    \ge-\frac{1}{2}\int_0^T \cD(\rho_\tau)\di \tau - \frac{1}{2} \int_0^T |\rho_\tau'|^2\di \tau .
\end{align*}
Thanks to the equality \eqref{eq:DeGiorgi_eq}, we actually have that the above inequalities are equalities, which holds if and only if $w_t(x,y)=-\dgrad\frac{\delta \cE}{\delta\rho}(x,y)$ for a.e.\ $t\in[0,T]$ and $\gamma_{1,t}$-a.e.\ $(x,y)\in G$. Hence $(\rho,\bj)\in\CE_T$ with $w=-\dgrad\frac{\delta \cE}{\delta\rho}$, that is, $\rho$ is a weak solution to~\eqref{eq:nlnl-interaction-eq}.
\end{proof}
\subsection{Stability and existence of weak solutions} \label{sec:stab-exist}
Theorem~\ref{thm:CurveMaxSlope} provides a characterization of (weak) solutions to~\eqref{eq:nlnl-interaction-eq} as minimizers of $\cG_T$ attaining the value $0$. The direct method of calculus of variations gives existence of minimizers of $\cG_T$. However, it is not clear a priori whether they attain the value $0$ and are thus actually weak solutions to~\eqref{eq:nlnl-interaction-eq}. Hence we prove compactness and stability of gradient flows (see Theorem \ref{thm:graph:stability}) and approximate the desired problem by discrete problems for which the existence of solutions is easy to show; see the proof of Theorem \ref{thm:existence-weak}. We start by proving that the local slope $\cD$ is narrowly lower semicontinuous jointly in its arguments, $\mu$ and $\rho$; see Lemma \ref{lem:lsc-dissipation}. We then establish the compactness coming from a uniform control of the De Giorgi functional $\cG_T$, as well as its joint narrow lower semicontinuity (see Lemma \ref{lem:lsc-Giorgi}), which we prove using compactness in $\CE_T$ and the joint narrow lower semicontinuity of the action (see Proposition \ref{prop:compactness-sol-ce}) and of the local slope. (See also \cite[Theorem 2]{Serfaty} for an analogous strategy.)

In Theorem \ref{thm:graph:stability} we prove one of our main results, namely that the functional $\cG_T$ is stable under variations in base measures, defining the vertices of the graph, and absolutely continuous curves. A particular consequence of this theorem is that weak solutions to~\eqref{eq:nlnl-interaction-eq} with respect to graphs defined by random samples of a measure $\mu$ converge to weak solutions to~\eqref{eq:nlnl-interaction-eq} with respect to $\mu$; see Remark~\ref{rem:DC-data}. 

The existence of weak solutions of~\eqref{eq:nlnl-interaction-eq} (and thus gradient flows) with respect to $\cE$ proved in Theorem \ref{thm:existence-weak} shows that, indeed, the De Giorgi functional~\eqref{eq:def:deGiorgi} corresponding to an interaction potential $K$ satisfying \ref{as:K:cont}--\ref{as:K:LipQuad} admits a minimizer when $\mu(\Rd)$ is finite.
\begin{lemma}\label{lem:lsc-dissipation}
    Let $(\mu^n)_n\subset\cM^+(\Rd)$ and suppose that $(\mu^n)_n$ narrowly converges to $\mu$. Assume that the base measures $(\mu^n)_n$ and $\mu$ are such that~\ref{it:as-conv} and~\ref{it:as-tight} hold uniformly in $n$, and let $K$ satisfy Assumptions~\ref{as:K:cont}--\ref{as:K:LipQuad}. Let moreover $(\rho^n)_n$ be a sequence such that $\rho^n \in \mP_{2}(\Rd)$ for all $n\in\mathbb{N}$ and $\rho^n\rightharpoonup \rho$ as $n\to\infty$ for some $\rho\in \mP_2(\Rd)$.
    Then
    \begin{equation*}
        \liminf_{n\to\infty} \cD(\mu^n;\rho^n) \geq \cD(\mu;\rho) .
    \end{equation*}
\end{lemma}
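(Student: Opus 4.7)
The strategy is to rewrite the local slope as an action and invoke (a localised version of) Lemma~\ref{lem:l.s.c.action}. First I would assume without loss of generality that $\liminf_n \cD(\mu^n;\rho^n)<\infty$ and extract a subsequence realising the liminf. Then, introducing the antisymmetric vector fields
\[
  v^n(x,y) := -\dgrad(K*\rho^n)(x,y) = \int_\Rd \bigl(K(x,z)-K(y,z)\bigr)\di\rho^n(z),
\]
and defining $v$ analogously from $\rho$, I would set the associated fluxes
\[
  \bj^n := (v^n)_+\, \rho^n\otimes\mu^n - (v^n)_-\, \mu^n\otimes\rho^n, \qquad \bj := v_+\, \rho\otimes\mu - v_-\, \mu\otimes\rho.
\]
By \eqref{eq:action:vas}, the identities $\cD(\mu^n;\rho^n)=\cA(\mu^n;\rho^n,\bj^n)$ and $\cD(\mu;\rho)=\cA(\mu;\rho,\bj)$ hold, reducing the task to showing that $\bj^n \rightharpoonup \bj$ in a suitable sense and applying narrow lower semicontinuity of $\cA$.

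The core technical step will be the locally uniform convergence $v^n \to v$ on $G$. The key input is Assumption~\ref{as:K:LipQuad}: the bound $|K(x,z)-K(y,z)| \leq L(|x-y|\vee|x-y|^2)$ is uniform in $z$. For fixed $(x,y)\in G$, the integrand $z\mapsto K(x,z)-K(y,z)$ is continuous and bounded independently of $z$, so narrow convergence $\rho^n\rightharpoonup\rho$ gives $v^n(x,y)\to v(x,y)$ pointwise. Applied in the variable $(x,y)$, Assumption~\ref{as:K:LipQuad} yields equicontinuity of $\{v^n\}_n$ on compact subsets of $G$ with modulus depending only on $L$; an Arzelà--Ascoli argument then upgrades pointwise to locally uniform convergence. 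Importantly, no a priori control on $M_2(\rho^n)$ will be required here---this is only possible because the \emph{differences} of $K$ are bounded in $z$.

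To conclude, for every $\varphi\in C_\mathrm{c}(G)$ the products $\varphi\,(v^n)_\pm$ converge uniformly to $\varphi\, v_\pm$ on $G$; combined with the narrow convergence of the product measures $\rho^n\otimes\mu^n\rightharpoonup\rho\otimes\mu$ and $\mu^n\otimes\rho^n\rightharpoonup\mu\otimes\rho$ (inherited from the factors, using boundedness of $\mu^n(\Rd)$), this gives $\bj^n\rightharpoonup\bj$ narrowly on every compact subset of $G$. Writing $\cA(\mu;\rho,\bj) = \sup_\varphi \cA_\varphi(\mu;\rho,\bj)$ as a supremum over truncations $\varphi\in C_\mathrm{c}(G;[0,1])$---justified by non-negativity of the integrand---and observing that each $\cA_\varphi$ is lower semicontinuous under the above local narrow convergence by the argument of Lemma~\ref{lem:l.s.c.action} finishes the proof. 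The main obstacle is the locally uniform convergence of $v^n$; once it is established, the remainder is routine.
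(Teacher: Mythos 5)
Your proposal is correct, but it takes a genuinely different route from the paper. The paper's proof keeps everything at the level of the velocity: it writes $\cD(\mu^n;\rho^n)=\iint_G g(u^n)\,\eta\,\di\rho^n\di\mu^n$ with $u^n=\dgrad K*\rho^n$ and $g$ convex, checks the uniform bound $\norm{u^n}_{L^2(\eta\,\rho^n\otimes\mu^n)}^2\le L^2C_\eta$ from \ref{as:K:LipQuad} and \ref{it:as-conv}, verifies that $u^n$ converges to $u$ weakly in the sense of \cite[Definition 5.4.3]{AGS} with respect to the varying measures $\eta\,\rho^n\otimes\mu^n$ (testing against $\varphi\in C_\mathrm{c}^\infty(G)$ and truncating the $z$-integral over $B_R$, using tightness of $(\rho^n)_n$), and then concludes directly from \cite[Theorem 5.4.4(ii)]{AGS}. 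You instead recast $\cD$ as the action $\cA(\mu^n;\rho^n,\bj^n)$ of the induced upwind flux, prove locally uniform convergence of $v^n=-\dgrad(K*\rho^n)$ (pointwise from narrow convergence plus the fact that $z\mapsto K(x,z)-K(y,z)$ is bounded by \ref{as:K:LipQuad}, equicontinuity again from \ref{as:K:LipQuad}), deduce local narrow convergence $\bj^n\rightharpoonup\bj$ on compact subsets of $G$, and invoke a localized form of Lemma~\ref{lem:l.s.c.action}. Your route is self-contained within the paper's own action machinery, produces a stronger intermediate statement (locally uniform convergence of the velocities), and neatly avoids the $B_R$-truncation/tightness step, since the differences of $K$ are bounded in $z$; the paper's route is shorter once the Ambrosio--Gigli--Savar\'e result is quoted and never needs flux-level compactness. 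The one point you should make explicit is that only local narrow convergence of the fluxes is available (their total variations need not be uniformly bounded on all of $G$ without moment bounds), so the supremum-over-truncations representation of $\cA$ and the Buttazzo-type lower semicontinuity of each truncated functional $\cA_\varphi$ are genuinely needed; the required cut-off functions on $G$ are constructed exactly as in the proof of Proposition~\ref{prop:compactness-sol-ce}, so this is routine but not omittable.
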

\begin{proof}
For every $n\in\mathbb{N}$ we set $u^n = \dgrad K*\rho^n$. Furthermore, we write $u= \dgrad K*\rho$ and define $g\colon \R \to \R$ by $g(x) = (x_+)^2$ for all $x \in \R$. Then note that $g$ is convex and continuous, and
    \begin{equation}
        \cD(\mu^n;\rho^n) = \iint_G g(u^n(x,y)) \eta(x,y) \di\rho^n(x) \di \mu^n(y),
    \end{equation}
    and, similarly, 
    \begin{equation}
        \cD(\mu;\rho) = \iint_G g(u(x,y)) \eta(x,y) \di\rho(x)\di \mu(y).
    \end{equation}
    We want to use \cite[Theorem 5.4.4 (ii)]{AGS} to prove the desired $\liminf$ inequality. Observe that $u^n \in L^2(\eta\,\gamma_1^n)$ and $u \in L^2(\eta\,\gamma_1)$; indeed, \ref{as:K:LipQuad} and \ref{it:as-conv} give
    \begin{align*}
        \iint_G u^n(x,y)^2 \eta(x,y)\di\gamma_1^n(x,y) &= \iint_G (K*\rho^n(y) - K*\rho^n(x))^2 \eta(x,y)\di\gamma_1^n(x,y)\le L^2 C_\eta,
    \end{align*}
    and, similarly, for $u$. Let now $\varphi \in C_\mathrm{c}^\infty(G)$. We have
    \begin{align*}
        \MoveEqLeft{\iint_G u^n(x,y)\varphi(x,y)\eta(x,y) \di \gamma_1^n(x,y)}\\
        &= \iint_G \left( \int_\Rd K(y,z)\di\rho^n(z) - \int_\Rd K(x,z)\di\rho^n(z)\right) \varphi(x,y)\eta(x,y) \di \gamma_1^n(x,y)\\
        &= \iint_G \int_\Rd ( K(y,z) - K(x,z) )\varphi(x,y)\eta(x,y) \di (\rho^n\otimes\gamma_1^n)(z,x,y)\\
        &=\iint_{\supp\varphi}\int_{\Rd\cap B_R}( K(y,z) - K(x,z) )\varphi(x,y)\eta(x,y) \di (\rho^n\otimes\gamma_1^n)(z,x,y)\\
        &\quad + \iint_{\supp\varphi}\int_{\Rd\setminus B_R}( K(y,z) - K(x,z) )\varphi(x,y)\eta(x,y) \di (\rho^n\otimes\gamma_1^n)(z,x,y).
    \end{align*}
The last integral is actually vanishing as $R\to\infty$ since \ref{as:K:LipQuad}, \ref{it:as-conv} and Prokhorov's Theorem give
\begin{align*}
    \MoveEqLeft{\left|\iint_{\supp\varphi}\int_{\Rd\setminus B_R}(K(y,z) - K(x,z) )\varphi(x,y)\eta(x,y) \di (\rho^n\otimes\gamma_1^n)(z,x,y)\right|}\\
    &\le\frac{L\|\varphi\|_{\infty}\rho^n(\Rd\setminus B_R)}{\inf_{\supp\varphi}(|x-y|\vee|x-y|^2)}\iint_{\supp\varphi}(|x-y|^2\vee|x-y|^4)\eta(x,y)\,\di\mu^n(y)\,\di\rho^n(x)\\
    &\le\frac{LC_\eta\|\varphi\|_{\infty}\rho^n(\Rd\setminus B_R)}{\inf_{\supp\varphi}(|x-y|\vee|x-y|^2)}\underset{R\to\infty}{\longrightarrow}0.
\end{align*}
The function $(z,x,y) \mapsto (K(y,z) - K(x,z))\varphi(x,y)\eta(x,y)$ is continuous and bounded on $(\Rd\cap B_R)\times G$ thanks to Assumption~\eqref{it:as:pos-sym-lsc}. In addition, we note that $(\rho^n\otimes\gamma_1^n)_n$ narrowly converges to $\rho\otimes\gamma_1$ in $\cP(\Rd)\times\cM^+(G)$. Therefore, we obtain for any $R>0$ the convergence
\begin{align*}
 \lim_{n\to \infty}\iint_{\supp\varphi}\int_{\Rd\cap B_R} &( K(y,z) - K(x,z) )\varphi(x,y)\eta(x,y) \di (\rho^n\otimes\gamma_1^n)(z,x,y)\\
 &=\iint_{\supp\varphi}\int_{\Rd\cap B_R}( K(y,z) - K(x,z) )\varphi(x,y)\eta(x,y) \di (\rho\otimes\gamma_1)(z,x,y) .
\end{align*}
By sending $R\to\infty$, we obtain
    \begin{align*}
        \MoveEqLeft{\lim_{n\to\infty} \iint_G u^n(x,y)\varphi(x,y) \eta(x,y)\di \gamma_1^n(x,y)}\\
        &= \iint_G \int_\Rd ( K(y,z) - K(x,z) )\varphi(x,y)\eta(x,y) \di (\rho\otimes\gamma_1)(z,x,y)\\
        &= \iint_G u(x,y)\varphi(x,y) \eta(x,y)\di \gamma_1(x,y).
    \end{align*}
    Thus, $u^n$ converges weakly to $u$ as $n\to\infty$ in the sense of \cite[Definition 5.4.3]{AGS}. By \cite[Theorem 5.4.4 (ii)]{AGS} we therefore conclude that
    \begin{align*}
         \liminf_{n\to\infty} \cD(\mu^n;\rho^n)&= \liminf_{n\to\infty} \iint_G g(u^n(x,y)) \eta(x,y) \di\rho^n(x) \di \mu^n(y)\\
         &\geq \iint_G g(u(x,y)) \eta(x,y) \di\rho(x)\di \mu(y) = \cD(\mu;\rho) ,
    \end{align*}
    which is the desired result.
\end{proof}
Let us also prove the compactness and narrow lower semicontinuity of the De Giorgi functional.
\begin{lemma}[Compactness and lower semicontinuity of the De Giorgi functional]\label{lem:lsc-Giorgi}
    Let $(\mu^n)_n\subset\cM^+(\R^d)$ and suppose that $(\mu^n)_n$ narrowly converges to $\mu$. Assume that the base measures $\mu^n$ and $\mu$ satisfy~\ref{it:as-conv} and~\ref{it:as-tight} uniformly in $n$, and let $K$ satisfy~\ref{as:K:cont}--\ref{as:K:LipQuad}. Let moreover $(\rho^n)_n$ be a sequence so that $\rho^n \in \AC([0,T];(\mP_2(\Rd),\cT_{\mu^n}))$ for all $n\in\N$ with $\sup_{n\in\N} M_2(\rho_0^n) < \infty$ and $\sup_{n\in \N} \cG_T(\mu^n;\rho^n)<\infty$. Then, up to a subsequence, $\rho^n_t \rightharpoonup \rho_t$ as $n\to\infty$ for all $t\in[0,T]$ for some $\rho \in \AC([0,T];(\mP_2(\Rd),\cT_\mu))$ and
     \begin{equation*}
        \liminf_{n\to\infty} \cG_T(\mu^n;\rho^n) \geq \cG_T(\mu;\rho).
    \end{equation*}
\end{lemma}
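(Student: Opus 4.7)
The plan is to use the $\cG_T$-bound to control the action uniformly, apply the compactness Proposition \ref{prop:compactness-sol-ce} to extract narrow limits of the densities and the associated tangent fluxes, and then treat the three contributions to $\cG_T$ separately: continuity of $\cE$ at the endpoints, Fatou + Lemma \ref{lem:lsc-dissipation} for the local slope, and lower semicontinuity of the action combined with Proposition \ref{prop:metric-velocity} for the metric-derivative term.

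First I would extract the tangent fluxes. By Proposition \ref{prop:metric-velocity}, for each $n$ there is a unique family $(\bj_t^n)_{t\in[0,T]}$ with $(\rho^n,\bj^n)\in\CE_T$ (with respect to $\mu^n$) and $|{\rho^n}'|^2(t)=\cA(\mu^n;\rho_t^n,\bj_t^n)$ for a.e.\ $t$. Since $\cG_T(\mu^n;\rho^n)$ is uniformly bounded and the energy $\cE$ is controlled by the second moment via \eqref{as:K:Quad} and $\sup_n M_2(\rho_0^n)<\infty$, I would argue iteratively: the initial moment bound together with $\int_0^T\cA\,dt=\int_0^T|{\rho^n}'|^2\,dt$ being controlled by $\cG_T(\mu^n;\rho^n)+\cE(\rho_T^n)-\cE(\rho_0^n)$ yields (by Lemma \ref{lem:CE:tightness}) a uniform bound $\sup_{t,n}M_2(\rho_t^n)\le C_2$; this in turn gives $\sup_n|\cE(\rho_T^n)-\cE(\rho_0^n)|<\infty$, closing the loop and producing
\begin{equation*}
  \sup_{n}\int_0^T\cA(\mu^n;\rho_t^n,\bj_t^n)\,\di t<\infty.
\end{equation*}

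With this uniform action bound, Proposition \ref{prop:compactness-sol-ce} applies: up to a subsequence, $\rho_t^n\rightharpoonup\rho_t$ narrowly for every $t\in[0,T]$, $\bj^n\rightharpoonup\bj$ in $\cM_\loc(G\times[0,T])$ with some $(\rho,\bj)\in\CE_T$ relative to $\mu$, and
\begin{equation*}
  \liminf_{n\to\infty}\int_0^T\cA(\mu^n;\rho_t^n,\bj_t^n)\,\di t\ge \int_0^T\cA(\mu;\rho_t,\bj_t)\,\di t.
\end{equation*}
Since the right-hand side is finite, Proposition \ref{prop:metric-velocity} yields $\rho\in\AC([0,T];(\mP_2(\Rd),\cT_\mu))$ together with the pointwise bound $|\rho'|^2(t)\le\cA(\mu;\rho_t,\bj_t)$ for a.e.\ $t$; integrating gives the liminf inequality for the metric-derivative term. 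For the local-slope term, Lemma \ref{lem:lsc-dissipation} gives, for each $t\in[0,T]$, $\liminf_n\cD(\mu^n;\rho_t^n)\ge\cD(\mu;\rho_t)$, and Fatou's lemma (applied to the non-negative integrand) promotes this to
\begin{equation*}
  \liminf_{n\to\infty}\int_0^T\cD(\mu^n;\rho_t^n)\,\di t\ge \int_0^T\cD(\mu;\rho_t)\,\di t.
\end{equation*}
Finally, the uniform second-moment bound and Proposition \ref{prop:cont-energy} applied at the endpoints $t=0$ and $t=T$ deliver $\cE(\rho_0^n)\to\cE(\rho_0)$ and $\cE(\rho_T^n)\to\cE(\rho_T)$. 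Summing the three liminf contributions with the correct weights reconstructs $\cG_T(\mu;\rho)$ on the right-hand side, which is the desired inequality.

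The main technical obstacle, I expect, is the bootstrap that establishes the a priori uniform action bound from the hypothesis $\sup_n\cG_T(\mu^n;\rho^n)<\infty$: one must simultaneously control the endpoint energies and the integrated action, which are coupled through the quadratic growth of $K$ and the second-moment propagation of Lemma \ref{lem:CE:tightness}. Once this is in place, the rest of the argument is essentially a bookkeeping exercise combining already-established narrow lower-semicontinuity results (Lemma \ref{lem:l.s.c.action}, Lemma \ref{lem:lsc-dissipation}) with the continuity of $\cE$ under narrow convergence with uniformly bounded second moments.
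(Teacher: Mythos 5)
Your architecture is exactly the paper's: obtain a uniform action bound, invoke Proposition \ref{prop:compactness-sol-ce} for the subsequential limits and the lower semicontinuity of the action, Proposition \ref{prop:metric-velocity} to get $\rho\in\AC([0,T];(\mP_2(\Rd),\cT_\mu))$ and $|\rho_t'|^2\le\cA(\mu;\rho_t,\bj_t)$, Proposition \ref{prop:cont-energy} for the endpoint energies, and Lemma \ref{lem:lsc-dissipation} together with Fatou for the slope term; all of that part is correct and matches the paper verbatim. The gap is in the one step you yourself flag as the main obstacle: your bootstrap for the uniform action bound is circular as written. To bound $\int_0^T|(\rho^n_t)'|^2_{\cT_{\mu^n}}\di t$ from $\sup_n\cG_T(\mu^n;\rho^n)<\infty$ you need a uniform lower bound on $\cE(\rho^n_T)$ (note also the sign: the dissipation integral is controlled by $\cG_T(\mu^n;\rho^n)-\cE(\rho^n_T)+\cE(\rho^n_0)$, not by $\cG_T(\mu^n;\rho^n)+\cE(\rho^n_T)-\cE(\rho^n_0)$); by the quadratic growth \eqref{as:K:Quad} this needs $\sup_n M_2(\rho^n_T)<\infty$; but Lemma \ref{lem:CE:tightness} gives that moment bound only in terms of the integrated action, which is exactly what you are trying to establish. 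Quantitatively the loop reads $\tfrac12 A_n\le C+cT A_n$ with $A_n=\int_0^T\cA(\mu^n;\rho^n_t,\bj^n_t)\di t$ and a constant $c$ proportional to the growth constant of $K$ and $C_\eta$, so it closes only under a smallness condition on $T$; "closing the loop" would require an extra ingredient you do not supply, e.g.\ partitioning $[0,T]$ into short subintervals and using that $\cG$ restricted to each subinterval is non-negative (Corollary \ref{cor:strongupper} plus Young), so that each piece inherits the global bound.

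For comparison, the paper derives the action bound directly from the definition of $\cG_T$, using that the domain of $\cE$ is all of $\mP_2(\Rd)$ and $\cD\ge 0$; if you want a fully explicit justification that avoids any moment bootstrap, one is available within the paper's toolbox: the computation in the proof of Theorem \ref{thm:CurveMaxSlope} shows $\cD(\mu^n;\nu)\le L^2C_\eta$ for every $\nu\in\mP_2(\Rd)$, hence by Corollary \ref{cor:strongupper} and Cauchy--Schwarz in time,
\begin{equation}
  \cE(\rho^n_T)-\cE(\rho^n_0)\ \ge\ -\int_0^T\sqrt{\cD(\mu^n;\rho^n_\tau)}\,|(\rho^n_\tau)'|\di\tau\ \ge\ -L\sqrt{C_\eta T}\,X_n,
  \qquad X_n^2:=\int_0^T|(\rho^n_t)'|^2_{\cT_{\mu^n}}\di t,
\end{equation}
so that $\cG_T(\mu^n;\rho^n)\ge \tfrac12 X_n^2-L\sqrt{C_\eta T}\,X_n$ and the uniform bound on $\cG_T$ immediately yields $\sup_n X_n<\infty$ (the hypothesis $\sup_n M_2(\rho^n_0)<\infty$ is then only needed, as in the paper, to run Proposition \ref{prop:compactness-sol-ce}). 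With the action bound obtained this way, the remainder of your proposal goes through unchanged.
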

\begin{proof}
    For any $n\in\N$, recall the definition
    \[
        \cG_T(\mu^n;\rho^n) = \cE(\rho^n_T) - \cE(\rho^n_0) + \frac12 \int_0^T \cD(\mu^n; \rho^n_t) \di t + \frac12 \int_0^T |(\rho_t^n)'|_{\mathcal{T}_{\mu^n}}^2 \di t,
    \]
    where we are careful to take the metric derivative of $\rho^n$ with respect to $\mathcal{T}_{\mu^n}$ (as given in Definition~\ref{defn:metric}). Since the domain of the energy $\cE$ is all of $\mP_2(\Rd)$ and the local slope $\cD$ is non-negative, the bound $\sup_{n\in \N} \cG_T(\mu^n;\rho^n)<\infty$ ensures that
    \[
        \sup_{n\in\N} \int_0^T |(\rho^n_t)'|_{\cT_{\mu^n}}^2 \di t < \infty.
    \]
    For all $n\in\N$, since $\rho^n \in \AC([0,T];(\mP_2(\Rd),\cT_{\mu^n}))$, Proposition \ref{prop:metric-velocity} yields the existence of a flux $\bj^n$ such that $(\rho^n,\bj^n)\in \CE_T$ and $|(\rho^n_t)'|^2 = \cA(\mu^n;\rho^n_t,\bj^n_t)$ for almost all $t\in[0,T]$. We then get
    \[
        \sup_{n\in\N} \int_0^T \cA(\mu^n;\rho^n_t,\bj^n_t) \di t = \sup_{n\in\N} \int_0^T |(\rho^n_t)'|_{\cT_{\mu^n}}^2 \di t < \infty.
    \]
    By Proposition \ref{prop:compactness-sol-ce}, there now exists $(\rho,\bj) \in \CE_T$ such that, up to subsequences, $\rho_t^n \rightharpoonup \rho_t$ for all $t\in[0,T]$ and $\bj^n \rightharpoonup \bj$ as $n\to\infty$, and
    \begin{equation*}
         \infty > \liminf_{n\to\infty} \int_0^T \cA(\mu^n;\rho^n_t, \bj^n_t) \di t \geq \int_0^T \cA(\mu;\rho_t,\bj_t) \di t.
    \end{equation*}
    By Proposition \ref{prop:metric-velocity}, we therefore have $\rho \in \AC([0,T];(\mP_2(\Rd),\cT_\mu))$ and $|(\rho_t)'|_{\cT_\mu}^2 \leq \cA(\mu;\rho_t,\bj_t)$ for almost all $t\in[0,T]$, which finally gives
    \begin{equation}\label{eq:metric-der-lsc}
        \liminf_{n\to\infty} \int_0^T |(\rho^n_t)'|^2_{\cT_{\mu^n}} \di t \geq \int_0^T |\rho_t'|_{\cT_\mu} \di t. \qedhere
    \end{equation}
    By the narrow continuity of the energy proved in Proposition \ref{prop:cont-energy}, we get
    \begin{equation}\label{eq:energy-cont}
        \lim_{n\to\infty} \cE(\rho^n_T) = \cE(\rho_T) \quad \text{and} \quad \lim_{n\to\infty} \cE(\rho_0^n) = \cE(\rho_0).
    \end{equation}
    Furthermore, by Fatou's lemma and the narrow lower semicontinuity of the local slope shown in Lemma \ref{lem:lsc-dissipation}, we have
    \begin{equation}\label{eq:dissipation-lsc}
        \liminf_{n\to\infty} \int_0^T \cD(\mu^n;\rho^n_t) \di t \geq \int_0^T \cD(\mu;\rho_t) \di t.
    \end{equation}  
    Gathering \eqref{eq:metric-der-lsc}, \eqref{eq:energy-cont} and \eqref{eq:dissipation-lsc}, we finally obtain
    \[
        \liminf_{n\to\infty} \cG_T(\mu^n;\rho^n) \geq \cE(\rho_T) - \cE(\rho_0) + \frac12 \int_0^T \cD(\mu;\rho_t) \di t + \frac12 \int_0^T |\rho_t'|_{\cT_\mu}^2 \di t = \cG_T(\mu;\rho),
    \]
    which ends the proof.
\end{proof}
We now get our stability result.
\begin{theorem}[Stability of gradient flows]\label{thm:graph:stability}
     Let $(\mu^n)_n\subset\cM^+(\R^d)$ and suppose that $(\mu^n)_n$ narrowly converges to $\mu$. Assume that the base measures $\mu^n$ and $\mu$ satisfy~\ref{it:as-conv} and~\ref{it:as-tight} uniformly in $n$, and let the interaction potential $K$ satisfy~\ref{as:K:cont}--\ref{as:K:LipQuad}. Suppose that $\rho^n$ is a gradient flow of $\cE$ with respect to $\mu^n$ for all $n\in\N$, that is,
  \begin{equation*}
      \cG_T(\mu^n;\rho^n) = 0 \quad \mbox{for all $n\in\N$},
  \end{equation*}
  such that $(\rho_0^n)_n$ satisfies $\sup_{n\in \N} M_2(\rho_0^n)< \infty$ and $\rho_t^n \rightharpoonup \rho_t$ as $n\to\infty$ for all $t\in[0,T]$ for some curve $(\rho_t)_{t\in[0,T]} \subset \mP_2(\Rd)$. Then, $\rho \in \AC([0,T];(\mP_2(\Rd),\cT_\mu))$ and $\rho$ is a gradient flow of $\cE$ with respect to $\mu$, that is,
    \begin{equation*}
      \cG_T(\mu;\rho) = 0.
  \end{equation*}
\end{theorem}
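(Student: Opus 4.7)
The plan is to prove $\cG_T(\mu;\rho)=0$ by establishing both $\cG_T(\mu;\rho)\le 0$ and $\cG_T(\mu;\rho)\ge 0$. The first bound will follow from the joint narrow lower semicontinuity of the De Giorgi functional established in Lemma~\ref{lem:lsc-Giorgi}, and the second from the fact that $\sqrt{\cD}$ is a one-sided strong upper gradient for $\cE$ (Corollary~\ref{cor:strongupper}) together with Young's inequality.

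For the upper bound I would apply Lemma~\ref{lem:lsc-Giorgi} directly. All its hypotheses hold by assumption: $\mu^n\rightharpoonup\mu$, \ref{it:as-conv} and \ref{it:as-tight} are uniform in $n$, $K$ satisfies \ref{as:K:cont}--\ref{as:K:LipQuad}, each $\rho^n$ belongs to $\AC([0,T];(\mP_2(\Rd),\cT_{\mu^n}))$ (otherwise the quantity $\cG_T(\mu^n;\rho^n)$ would be undefined in the first place), $\sup_n M_2(\rho_0^n)<\infty$, and the bound $\sup_n \cG_T(\mu^n;\rho^n)=0$ is trivial. The lemma yields a subsequence along which $\rho_t^n\rightharpoonup\tilde\rho_t$ for every $t\in[0,T]$ for some $\tilde\rho\in\AC([0,T];(\mP_2(\Rd),\cT_\mu))$, with $\liminf_n\cG_T(\mu^n;\rho^n)\ge \cG_T(\mu;\tilde\rho)$. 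Since the full sequence $\rho_t^n$ is already assumed to converge narrowly to $\rho_t$ for every $t$, uniqueness of narrow limits forces $\tilde\rho=\rho$. Hence $\rho\in \AC([0,T];(\mP_2(\Rd),\cT_\mu))$ and
\[
\cG_T(\mu;\rho)\le \liminf_{n\to\infty}\cG_T(\mu^n;\rho^n)=0.
\]

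For the reverse inequality, having just secured that $\rho\in \AC([0,T];(\mP_2(\Rd),\cT_\mu))$, I would invoke Corollary~\ref{cor:strongupper} between $s=0$ and $t=T$ to obtain
\[
\cE(\rho_T)-\cE(\rho_0)\ge -\int_0^T \sqrt{\cD(\mu;\rho_\tau)}\,|\rho_\tau'|_{\cT_\mu}\di\tau,
\]
and then apply Young's inequality $ab\le \tfrac12 a^2+\tfrac12 b^2$ to the right-hand side to conclude $\cG_T(\mu;\rho)\ge 0$. Combining the two bounds gives $\cG_T(\mu;\rho)=0$, which by Theorem~\ref{thm:CurveMaxSlope} is equivalent to saying that $\rho$ is a weak solution of \eqref{eq:nlnl-interaction-eq} with respect to the base measure $\mu$.

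The main (and essentially only) obstacle in this argument is the absolute continuity of the limiting curve $\rho$ with respect to the target quasi-metric $\cT_\mu$: without it, $\cG_T(\mu;\rho)$ is not even defined and Corollary~\ref{cor:strongupper} cannot be invoked. This is exactly what Lemma~\ref{lem:lsc-Giorgi} delivers, and that lemma itself rests on the compactness of solutions of the nonlocal continuity equation under varying base measures (Proposition~\ref{prop:compactness-sol-ce}) combined with the joint narrow lower semicontinuities of the action (Lemma~\ref{lem:l.s.c.action}) and of the local slope (Lemma~\ref{lem:lsc-dissipation}). Once this technical machinery is in place, the stability theorem itself becomes a soft consequence.
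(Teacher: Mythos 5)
Your proposal is correct and follows essentially the same route as the paper: apply Lemma~\ref{lem:lsc-Giorgi} (with $\sup_n\cG_T(\mu^n;\rho^n)=0$) to get $\rho\in\AC([0,T];(\mP_2(\Rd),\cT_\mu))$ and $\cG_T(\mu;\rho)\le 0$, then conclude $\cG_T(\mu;\rho)\ge 0$ from Corollary~\ref{cor:strongupper} with Young's inequality. Your explicit remark that the subsequential limit provided by the lemma must coincide with the assumed limit $\rho$ by uniqueness of narrow limits is a detail the paper leaves implicit, but the argument is the same.
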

\begin{proof}
    By Lemma \ref{lem:lsc-Giorgi} we directly obtain that $\rho \in \AC([0,T];(\mP_2(\Rd),\cT_\mu))$ and, up to a subsequence,
    \[
        0 = \liminf_{n\to\infty} \cG_T(\mu^n;\rho^n) \geq \cG(\mu;\rho).
    \]
    Finally, since $\cG_T(\mu;\rho) \geq 0$ by Young's inequality and Corollary \ref{cor:strongupper}, we obtain $\cG_T(\mu;\rho) = 0$.
\end{proof}
Note that, via Theorem \ref{thm:CurveMaxSlope}, the above theorem also shows stability of weak solutions to~\eqref{eq:nlnl-interaction-eq}. Typically, in Theorem \ref{thm:graph:stability}, $(\mu^n)_n$ is a sequence of atomic measures used to approximate, or sample, the support of $\mu$. Indeed, we  now use this approach to show the existence of weak solutions to the nonlocal nonlocal-interaction equation.
\begin{theorem}[Existence of weak solutions]\label{thm:existence-weak}
    Let $K$ be an interaction potential satisfying Assumptions \ref{as:K:cont}--\ref{as:K:LipQuad}. Suppose that $\mu \in \cM^+(\R^d)$ is finite, i.e., $\mu(\Rd)<\infty$, and satisfies~\ref{it:as-tight}. Assume furthermore that for some $C_\eta' > 0$ it holds that
    \begin{equation}\label{eq:stronger-eta}
       \sup_{(x,y) \in G \cap \supp \mu \otimes \mu} \bra*{ |x-y|^2 \vee |x-y|^4 } \eta(x,y) \leq C_\eta'.
    \end{equation}
Consider $\rho_0 \in \mP_2(\Rd)$ which is $\mu$-absolutely continuous. Then there exists a weakly continuous curve $\rho\colon [0,T] \to \cP(\Rd)$ such that $\supp\rho_t\subseteq \supp\mu$ for all $t\in[0,T]$, which is a weak solution of~\eqref{eq:nlnl-interaction-eq} and satisfies the initial condition $\rho(0)=\rho_0$.
\end{theorem}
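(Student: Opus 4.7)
The plan is to construct a weak solution by approximating the base measure $\mu$ by a sequence of finite atomic measures $(\mu^n)_n$, for which the ODE system~\eqref{eq:intro:CE}--\eqref{eq:intro:vE} admits a classical solution by Remark~\ref{rem:existence-strong-discrete}, and then passing to the limit $n\to\infty$ via the lower semicontinuity and compactness of the De Giorgi functional (Lemma~\ref{lem:lsc-Giorgi}) together with the stability result Theorem~\ref{thm:graph:stability}. The identification of the limit curve as a weak solution will then come from Theorem~\ref{thm:CurveMaxSlope}.

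For the discretization, I would partition $\supp\mu$ into disjoint Borel sets $\set{E_i^n}_{i=1}^{N_n}$ with $\operatorname{diam}(E_i^n)\le 1/n$, choose $x_i^n\in E_i^n\cap\supp\mu$, and set
\[
  \mu^n := \sum_{i=1}^{N_n}\mu(E_i^n)\,\delta_{x_i^n},\qquad \rho_0^n := \sum_{i=1}^{N_n}\rho_0(E_i^n)\,\delta_{x_i^n}.
\]
Here $\rho_0^n\in\mP_2(\Rd)$ is well defined because $\rho_0\ll\mu$ forces $\supp\rho_0\subseteq\supp\mu$, and a standard continuity-based argument yields $\mu^n\rightharpoonup\mu$, $\rho_0^n\rightharpoonup\rho_0$, and $\sup_n M_2(\rho_0^n)<\infty$. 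The reinforced bound~\eqref{eq:stronger-eta} combined with $\mu^n(\Rd)=\mu(\Rd)<\infty$ controls Assumption~\ref{it:as-conv} uniformly in $n$ on $\supp\mu$ (which is where the support of every relevant $\rho_t^n$ lies, by Proposition~\ref{prop:supp-in-mu}).

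For each $n$, Remark~\ref{rem:existence-strong-discrete} furnishes a classical solution $\rho^n$ of~\eqref{eq:intro:CE}--\eqref{eq:intro:vE} on the finite graph supported at $\set{x_i^n}$, starting from $\rho_0^n$. By inspection, $\rho^n$ is a weak solution of~\eqref{eq:nlnl-interaction-eq} for the measure $\mu^n$ in the sense of Definition~\ref{defn:nl2ie}, so Theorem~\ref{thm:CurveMaxSlope} gives $\rho^n\in\AC([0,T];(\mP_2(\Rd),\cT_{\mu^n}))$ and $\cG_T(\mu^n;\rho^n)=0$. Lemma~\ref{lem:lsc-Giorgi} then yields, along a subsequence, $\rho_t^n\rightharpoonup\rho_t$ for every $t\in[0,T]$ with $\rho\in\AC([0,T];(\mP_2(\Rd),\cT_\mu))$ and
\[
  0\le\cG_T(\mu;\rho)\le\liminf_{n\to\infty}\cG_T(\mu^n;\rho^n)=0,
\]
where the nonnegativity follows from Young's inequality applied to the one-sided upper gradient bound in Corollary~\ref{cor:strongupper}. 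Hence $\cG_T(\mu;\rho)=0$, and Theorem~\ref{thm:CurveMaxSlope} identifies $\rho$ as a weak solution of~\eqref{eq:nlnl-interaction-eq}. Narrow convergence at $t=0$ preserves the initial condition $\rho(0)=\rho_0$, and Proposition~\ref{prop:supp-in-mu} gives $\supp\rho_t\subseteq\supp\mu$ for every $t\in[0,T]$.

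The main obstacle is ensuring that Assumptions~\ref{it:as-conv} and~\ref{it:as-tight} transfer uniformly in $n$ to $(\mu^n)_n$. While~\eqref{eq:stronger-eta} immediately dominates the uniform bound in~\ref{it:as-conv} on $\supp\mu$, the uniform local blow-up control~\ref{it:as-tight} reduces, via~\eqref{eq:stronger-eta}, to showing that $\sup_{x\in\Rd}\mu^n(B_\eps(x)\setminus\set{x})\to 0$ uniformly in $n$ as $\eps\to 0$. This is not automatic for an arbitrary partition, and requires choosing $\set{E_i^n}$ to respect the local mass profile of $\mu$---for example, further refining the partition in regions where $\mu$ might concentrate, guided by the knowledge that $\mu$ itself satisfies~\ref{it:as-tight}. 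This is the delicate technical point of the construction.
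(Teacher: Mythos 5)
Your overall architecture is the same as the paper's proof: approximate $\mu$ by measures with finitely many atoms in $\supp\mu$, invoke Remark~\ref{rem:existence-strong-discrete} for the discrete dynamics, identify each discrete solution as a curve of maximal slope with $\cG_T(\mu^n;\rho^n)=0$ via Theorem~\ref{thm:CurveMaxSlope}, pass to the limit with Lemma~\ref{lem:lsc-Giorgi} and Theorem~\ref{thm:graph:stability}, use~\eqref{eq:stronger-eta} together with Proposition~\ref{prop:supp-in-mu} to replace~\ref{it:as-conv} by a uniform bound on $\supp\mu$, and conclude with Theorem~\ref{thm:CurveMaxSlope} again. The one structural deviation is the construction of $\rho_0^n$: you lump the $\rho_0$-mass of partition cells onto chosen atoms, whereas the paper pushes the density $\tilde\rho_0$ through an optimal transport plan between $\mu$ and the renormalized atomic measure $\hat\mu^n$; both routes give $\rho_0^n\ll\mu^n$, $\rho_0^n\rightharpoonup\rho_0$ and bounded second moments, and your choice $\mu^n(\Rd)=\mu(\Rd)$ even dispenses with the renormalization step, so this variant is fine. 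A small fixable point: a partition of $\supp\mu$ into \emph{finitely} many cells of diameter at most $1/n$ exists only when $\supp\mu$ is bounded, which is not assumed; you would need to truncate (partition $\supp\mu\cap B_n$, handle the tail mass with care so that $\sup_n M_2(\rho_0^n)<\infty$ survives).

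The genuine gap is the uniform verification of~\ref{it:as-tight}, which you flag but then mis-diagnose. The sufficient condition you propose, namely $\sup_{x}\mu^n(B_\eps(x)\setminus\{x\})\to 0$ as $\eps\to0$ uniformly in $n$, is strictly stronger than what is needed and is in general incompatible with the hypotheses: the theorem allows $\mu$ to have atoms (a purely atomic finite $\mu$ satisfying~\ref{it:as-tight} and~\eqref{eq:stronger-eta} is admissible), and then no approximating sequence --- not even $\mu^n=\mu$ itself --- satisfies your condition, although uniform~\ref{it:as-tight} can hold because it is the weighted quantity $|x-y|^2\eta(x,y)$, not the bare mass of small balls, that must be small near the diagonal. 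In other words, estimating $\int_{B_\eps(x)\setminus\{x\}}|x-y|^2\eta(x,y)\di\mu^n(y)\le C'_\eta\,\mu^n(B_\eps(x)\setminus\{x\})$ via~\eqref{eq:stronger-eta} throws away precisely the decay that~\ref{it:as-tight} for $\mu$ encodes. The paper instead exploits the freedom in choosing the approximating sequence: it simply requires $(\mu^n)_n$ to consist of finitely many atoms contained in $\supp\mu$ and asserts that, together with~\ref{it:as-tight} for $\mu$, the family can be taken so that~\ref{it:as-tight} holds uniformly in $n$ (admittedly tersely). Your write-up should argue uniform~\ref{it:as-tight} directly for a suitably chosen atomic approximation --- e.g.\ keeping atoms of $\mu$ as atoms and controlling where lumped mass is placed relative to the local behavior of $|x-\cdot|^2\eta(x,\cdot)$ --- rather than through the mass-concentration criterion you state, which cannot be established from the hypotheses and would wrongly exclude admissible base measures.
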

\begin{proof}
Let $(\mu^n)_n \subset \cM^+(\R^d)$ be a sequence of atomic measures such that $(\mu^n)_n$ converges narrowly to $\mu$. Moreover, assume that $\mu^n$ has finitely many atoms and $\mu^n(\Rd) \leq \mu(\Rd)$ and $\supp\mu^n \subseteq \supp\mu$ for all $n\in\N$.
Let $\hat \mu^n$ be the normalization of $\mu^n$ which has the same total mass as $\mu$, that is,
\[ 
  \hat \mu^n = \frac{\mu(\R^d)}{\mu^n(\R^d)} \, \mu^n , 
\]
and let $\pi^n$ be optimal transportation plan between $\mu$ and $\hat \mu^n$ for the quadratic cost. Let $\rho_0^n$ be the second marginal of $\tilde \rho_0 \pi^n$, where $\tilde \rho_0$ is the density of the measure $\rho_0$ with respect to $\mu$; namely, let $\rho_0^n(A) = \int_{\R^d \times A} \tilde \rho_0(x) \di\pi^n(x,y)$ for any Borel set $A\subset \Rd$. 
Note that $\rho_0^n(\R^d) = \rho_0(\R^d)$ and $\rho_0^n \ll \mu^n$ for all $n\in\N$, and that, since $\tilde \rho_0 \pi^n$ is a transport plan between $\rho_0$ and $\rho_0^n$, $\rho_0^n \rightharpoonup \rho_0$ as $n\to\infty$.   

Thanks to Assumption~\eqref{eq:stronger-eta}, it holds, for all $n\in\N$,
\begin{equation}\label{it:as-conv:mu}
  \esssupmu_{x\in \R^d} \int \bra{ |x-y|^2 \vee |x-y|^4 } \eta(x,y) \di \mu^n(y) \leq \mu^n(\Rd) C_\eta' \leq \mu(\Rd) C_\eta' .
\end{equation}
Since by construction $\rho_0^n \ll \mu^n$, we have $\supp \rho_0^n \subseteq \supp \mu^n \subseteq \supp \mu$. This nested support property is, thanks to Proposition~\ref{prop:supp-in-mu}, preserved in time, so that $\supp\rho_t^n \subseteq \supp\mu$ for all $t\in[0,T]$ and $n\in\N$. For this reason, \eqref{it:as-conv:mu} can be used, under the stated support restriction on $\rho_0$, instead of Assumption~\ref{it:as-conv} uniformly in $n$ when calling Lemma \ref{lem:lsc-Giorgi} and Theorem \ref{thm:graph:stability} later in this proof. Since $\mu^n$ consists of finitely many atoms and $\mu$ satisfies~\ref{it:as-tight}, the family $(\mu_n)_n$ satisfies~\ref{it:as-tight} uniformly in $n$.
    
By Remark \ref{rem:existence-strong-discrete}, we know that the ODE system~\eqref{eq:intro:CE}--\eqref{eq:intro:vE} admits a unique solution for all $n\in\N$. It can be easily checked that this solution, which we denote by $\rho^n$, is a weak solution to \eqref{eq:nlnl-interaction-eq} with respect to $\mu^n$ starting from $\rho_0^n$, according to Definition \ref{defn:nl2ie}. 
By Theorem \ref{thm:CurveMaxSlope}, we then get that~$\rho^n$ is a gradient flow of $\cE$ with respect to $\mu$ starting from $\rho_0^n$ for all $n\in\N$.
    
Combining the compactness part of Lemma \ref{lem:lsc-Giorgi} and the stability from Theorem \ref{thm:graph:stability}, we get that, up to a subsequence, $\rho_t^n \rightharpoonup \rho_t$ as $n\to\infty$ for all $t\in[0,T]$, where $\rho \in \AC([0,T];(\mP_2(\Rd),\cT_\mu))$ is a gradient flow of $\cE$ with respect to $\mu$ starting from $\rho_0$. Theorem \ref{thm:CurveMaxSlope} finally shows that $\rho$ is a weak solution to \eqref{eq:nlnl-interaction-eq} with respect to $\mu$ starting from $\rho_0$.
\end{proof}
\begin{rem}
 Assumption~\eqref{eq:stronger-eta} is only needed to arrive at an atomic approximation sequence~$(\mu^n)_n$ of $\mu$ such that Assumptions~\ref{it:as-conv} and~\ref{it:as-tight} hold uniformly in $n$. On a case-by-case basis, one could drop~\eqref{eq:stronger-eta} and try to construct the sequence $(\mu^n)_n$ explicitly in such a way as to satisfy both assumptions uniformly in $n$. 
\end{rem}
\begin{rem} \label{rem:DC-data}
We conclude the section by remarking on the relevance of the Theorem \ref{thm:graph:stability} to the setting of machine learning. Namely, there $\mu$ is the measure modeling the true data distribution, which can be assumed to be compact. Let $(x_i)_i$ be a sequence of i.i.d.\ samples of $\mu$ and let $\mu^n  = \frac{1}{n} \sum_{i=1}^n \delta_{x_i}$ be the empirical measure of the first $n$ sample points. Assume $(\rho^n)_n$ is a narrowly converging sequence of probability measures such that $\supp \rho^n \subseteq \{x_1, \dots, x_n\}$ for all $n\in\N$, and denote by $\rho$ its limit. Assume that $\eta$ is an edge weight kernel such that $\mu$ and $\eta$ satisfy~\ref{it:as-tight} and~ \eqref{eq:stronger-eta}. Let $K$ be an interaction kernel satisfying~\ref{as:K:sym} and \ref{as:K:LipQuad}. Finally, let $(\tilde \rho^n)_n$ be the sequence of solutions of \eqref{eq:nlnl-interaction-eq} in the sense of Definition \ref{defn:nl2ie} such that $\tilde \rho^n_0 = \rho^n$ for all $n\in\N$. Then, by Lemma \ref{lem:lsc-Giorgi}, the sequence~$(\tilde \rho_t^n)_n$ narrowly converges along a subsequence for all $t\in[0,T]$, and furthermore, by Theorem~\ref{thm:existence-weak}, any curve $(\tilde \rho_t)_{t\in[0,T]}$ of subsequential limits yields a solution $\tilde \rho$ of \eqref{eq:nlnl-interaction-eq} with initial condition $\rho$. 
\end{rem}
\subsection{Discussion of the local limit}\label{subsec:local_limit}
Here we discuss at a formal level the connection between the nonlocal nonlocal-interaction equation and its limit as the graph structure localizes. 
    We first present a very formal justification as to why we expect the solutions of 
    \eqref{eq:nlnl-interaction-eq} to converge to the solutions of a nonlocal-interaction equation as the localizing parameter $\varepsilon \to 0^+$, i.e., as the edge-weight function $\eta = \eta_\varepsilon$ localizes.
  We conclude this section with an example that cautions that the formal argument cannot be justified in full generality. Proving the convergence of \eqref{eq:nlnl-interaction-eq} in the limit $\eps \to 0^+$, under appropriate conditions, remains an intriguing open problem.
    
  Take $\mu=\operatorname{Leb}(\Rd)$ and choose $\eta_\varepsilon$ given by~\eqref{eq:def:eta:local}.
    Consider a smooth interaction potential $K\colon \Rd \times \Rd \to \R$ and a compactly supported initial condition $\rho_0$ which has a continuous density with respect to $\mu$. Let $\rho^\eps$ be the solution of \eqref{eq:nlnl-interaction-eq} starting from $\rho_0$ for the edge weight function $\eta_\eps$. Assume that $\rho^\eps_t$ is absolutely continuous with respect to $\mu$ for all $t$. In the following we drop the $t$-dependence of $\rho^\eps$ for brevity. From \eqref{eq:nlnl-interaction-eq}, by adding and subtracting $\rho^\eps(x) \int_\Rd (\dgrad K*\rho^\eps(x,y))_{+} \eta_\eps(x,y) \di y$, it follows that
    \begin{equation*}
        \partial_t \rho^\eps(x) = - \rho^\eps(x) \int_{\Rd} \dgrad K*\rho^\eps(x,y) \eta_\varepsilon(x,y) \di y - \int_{\Rd} \dgrad \rho^\eps(x,y) (\dgrad K*\rho^\eps(x,y))_+ \eta_\varepsilon(x,y) \di y.
    \end{equation*}
    Then, for almost all $x \in \Rd$ we have 
    \begin{align*} 
        \int_{\Rd} \dgrad K*\rho^\eps(x,y) \eta_\varepsilon(x,y) \di y &= \frac{2(2+d)}{\varepsilon^2} \int_\Rd (K*\rho^\eps(y) - K*\rho^\eps(x)) \frac{\chi_{B_\varepsilon(x)}(y)}{|B_\varepsilon|} \di y\\
        & = \frac{2(2+d)}{\varepsilon^2} \left( \frac{1}{|B_\varepsilon|} \int_{B_\varepsilon(x)} K*\rho^\eps(y) \di y - K*\rho^\eps(x) \right).
    \end{align*}
A standard calculation, using a second-order Taylor expansion, shows that the right-hand side approximates $\Delta K*\rho^\eps(x)$ when $\eps$ is small, provided that derivatives of $\rho^\eps$ remain uniformly bounded.
 
Similarly, by Taylor expanding $\dgrad \rho^\varepsilon$ and $\dgrad K \ast \rho^\varepsilon$ to first order and changing variable over the unit sphere while carefully tracking the positive part, one gets
\begin{equation}
  \int_\Rd \dgrad \rho^\eps(x,y) (\dgrad K*\rho^\eps(x,y))_+ \eta_\varepsilon(x,y) \di y \approx \nabla \rho^\eps(x) \cdot \nabla K*\rho^\eps(x) \quad \text{for small }  \eps.  \label{e:expanse:nabla}
\end{equation}
Combining the expressions above yields
\begin{equation*}
  \partial_t \rho^\eps(x) \approx -\rho^\eps(x) \Delta K*\rho^\eps(x) - \nabla \rho^\eps(x) \cdot \nabla K*\rho^\eps(x) = -\nabla \cdot (\rho^\eps \nabla K*\rho^\eps)(x).
\end{equation*}
This suggests that if $\rho^\eps$ converge as $\eps \to 0^+$, then the limiting $\rho$ is a solution of the standard nonlocal interaction equation \eqref{eq:NLIE}. A possible way to attack the local limit within the variational framework is via a stability statement similar to that of Theorem~\ref{thm:graph:stability}, but now with respect to the family $(\eta_\eps)_{\eps>0}$ in the limit $\eps\to 0^+$.
The following remark indicates that this will require further regularity assumptions on the interaction kernel $K$.
\begin{rem}
We present an example that indicates that, in certain situations, solutions of \eqref{eq:nlnl-interaction-eq} cannot be expected to converge to solutions of \eqref{eq:NLIE} as the interaction kernel $\eta_\eps$ becomes more concentrated. Namely, consider $d=1$, $\Omega = (-2,2)$ and $\mu=\operatorname{Leb}(\Omega)$. Let $K(x,y) = 1-e^{-|x-y|}$ for all $x,y\in\Omega$ and $\eta$ be a smooth, even function, positive on $(-0.2,0.2)$ and zero otherwise. Consider $\rho_0 = \frac12 (\delta_{-1} + \delta_1)$. It is straightforward to verify that $\rho_t = \rho_0$ for all $t\in[0,T]$ yields a weak solution of 
\eqref{eq:nlnl-interaction-eq} for all $\eps>0$. In particular, note that the corresponding velocity field satisfies $v(-1,y) = -(K*\rho_0(y) - K*\rho_0(-1)) \leq 0$ for all $y \in (-1.2,-0.8)$, and thus the flux from $x=-1$ remains zero, and analogously from $x=1$. Therefore, one cannot expect the weak solutions for the interaction potential $K$ to converge to weak solutions of \eqref{eq:NLIE} as $\eps \to 0^+$. 
We believe that, for these particular kernel $K$ and edge weights $\eta$, the problem persists for strong solutions for initial data close to $\rho_0$, only that explicit solutions are not available.
\end{rem}


\begin{appendix}
\section{Minkowski norm of the underlying Finsler structure}\label{app:minkowski}
In this appendix we show that, given $\rho \in \mP_2(\Rd)$ and $\bj \in T_\rho\mP_2(\Rd)$, the inner product $g_{\rho,\bj}$ from Section \ref{subsec:HeuristicFinsler} derives from a so-called Minkowski norm, as it should be in the theory of Finsler geometry; see \cite{Chern,Shen,Dahl}. 

Let us fix $\rho \in \mP_2(\Rd)$ which is absolutely continuous with respect to $\mu$, in accordance with Section~\ref{subsec:HeuristicFinsler}. For $\bj\in T_\rho\mP_2(\Rd)$, we denote $j$ its density with respect to $\mu \otimes \mu$. We show that the function $F_\rho\colon T_\rho\mP_2(\Rd) \to \R$ given by
\begin{equation*}
    F_\rho(\bj) = \sqrt{\frac12\iint_G j(x,y)^2 \eta(x,y) \left(\frac{\chi_{\{j>0\}}(x,y)}{\rho(x)} + \frac{\chi_{\{j<0\}}(x,y)}{\rho(y)}\right) \di\mu(x)\di\mu(y)} \quad \text{for all $\bj\in T_\rho\mP_2(\Rd)$},
\end{equation*}
is a Minkowski norm, that is, it is smooth away from $0$, positively $1$-homogeneous and, whenever $\bj$ in nonzero $\eta\, \mu\otimes\mu$-a.e., its second variation is a symmetric positive definite bilinear form. In fact, we now prove that, for all $\bj,\bj_1,\bj_2 \in T_\rho\mP_2(\Rd)$ such that $\bj$ is nonzero $\eta\,\mu\otimes\mu$-a.e.,
\begin{equation}\label{eq:minkowski-ip}
    \left. \frac12 \frac{\partial^2}{\partial t\partial s} \right|_{s=t=0} F_\rho^2(\bj + s\bj_1 + t\bj_2) = g_{\rho,\bj}(\bj_1,\bj_2).
\end{equation}
Indeed, let $\bj \in T_\rho\mP_2(\Rd)$, $s\in \R$ and $\bj_1 \in T_\rho\mP_2(\Rd)$ such that $\bj+s\bj_1 \in T_\rho\mP_2(\Rd)$. Then,
\begin{align*}
    &F_\rho^2(\bj+s\bj_1) - F_\rho^2(\bj)\\
    &= \frac12\iint_G (j(x,y)+sj_1(x,y))^2 \eta(x,y) \left(\frac{\chi_{\{j+sj_1>0\}}(x,y)}{\rho(x)} + \frac{\chi_{\{j+sj_1<0\}}(x,y)}{\rho(y)}\right) \di\mu(x)\di\mu(y)\\
    &\phantom{=}- \frac12\iint_G j(x,y)^2 \eta(x,y) \left(\frac{\chi_{\{j>0\}}(x,y)}{\rho(x)} + \frac{\chi_{\{j<0\}}(x,y)}{\rho(y)}\right) \di\mu(x)\di\mu(y)\\
    &= \frac12\iint_G j(x,y)^2 \eta(x,y) \left( \frac{\chi_{\{j+sj_1>0\}}(x,y)}{\rho(x)}-\frac{\chi_{\{j>0\}}(x,y)}{\rho(x)}\right) \di\mu(x)\di\mu(y)\\
    &\phantom{=}+ \frac12\iint_G j(x,y)^2\eta(x,y) \left(\frac{\chi_{\{j+sj_1<0\}}(x,y)}{\rho(y)} - \frac{\chi_{\{j<0\}}(x,y)}{\rho(y)}\right) \di\mu(x)\di\mu(y)\\
    &\phantom{=}+  \frac{s^2}{2}\iint_G j_1(x,y)^2 \eta(x,y) \left(\frac{\chi_{\{j+sj_1>0\}}(x,y)}{\rho(x)} + \frac{\chi_{\{j+sj_1<0\}}(x,y)}{\rho(y)}\right) \di\mu(x)\di\mu(y)\\
    &\phantom{=}+ s\iint_G j(x,y)j_1(x,y) \eta(x,y) \left(\frac{\chi_{\{j+sj_1>0\}}(x,y)}{\rho(x)} + \frac{\chi_{\{j+sj_1<0\}}(x,y)}{\rho(y)}\right) \di\mu(x)\di\mu(y)\\
    &=: I_1 + I_2 + I_3 + I_4.
\end{align*}
Note that
\begin{align*}
    I_1 &= \frac12\iint_{\{0<j \leq -sj_1\}} j(x,y)^2 \eta(x,y) \left( \frac{\chi_{\{j+sj_1>0\}}(x,y)}{\rho(x)} - \frac{\chi_{\{j>0\}}(x,y)}{\rho(x)} \right)\di\mu(x)\di\mu(y)\\
    &\phantom{=}+ \frac12\iint_{\{-sj_1<j \leq 0\}} j(x,y)^2 \eta(x,y) \left( \frac{\chi_{\{j+sj_1>0\}}(x,y)}{\rho(x)}-\frac{\chi_{\{j>0\}}(x,y)}{\rho(x)}\right) \di\mu(x)\di \mu(y)\\
    &= -\frac12\iint_{\{0<j \leq -sj_1\}} \frac{j(x,y)^2 \eta(x,y)}{\rho(x)} \di\mu(x)\di\mu(y) + \frac12\iint_{\{-sj_1<j < 0\}} \frac{j(x,y)^2 \eta(x,y)}{\rho(x)} \di\mu(x)\di\mu(y).
\end{align*}
Therefore,
\begin{equation*}
    |I_1| \leq s^2 \iint_G \frac{j_1(x,y)^2 \eta(x,y)}{\rho(x)} \di\mu(x)\di\mu(y).
\end{equation*}
Similarly, one gets
\begin{equation*}
    |I_2| \leq s^2 \iint_G \frac{j_1(x,y)^2 \eta(x,y)}{\rho(y)} \di\mu(x)\di\mu(y).
\end{equation*}
We also have 
\begin{equation*}
    |I_3| \leq \frac{s^2}{2}\iint_G \frac{j_1(x,y) \eta(x,y)}{\rho(x)} \di\mu(x)\di\mu(y) + \frac{s^2}{2}\iint_G \frac{j_1(x,y) \eta(x,y)}{\rho(y)} \di\mu(x)\di\mu(y)
\end{equation*}
Using Lebesgue's dominated convergence theorem, one gets moreover that
\begin{equation*}
    \lim_{s\to0}\frac{I_4}{s} = \iint_G j_1(x,y)j(x,y) \eta(x,y) \left(\frac{\chi_{\{j>0\}}(x,y)}{\rho(x)} + \frac{\chi_{\{j<0\}}(x,y)}{\rho(y)}\right) \di\mu(x)\di\mu(y).
\end{equation*}
We thus overall get
\begin{align*}
    \lim_{s\to0} \frac{F_\rho^2(\bj+s\bj_1) - F_\rho^2(\bj)}{s} &= \iint_G j_1(x,y)j(x,y) \eta(x,y) \left(\frac{\chi_{\{j>0\}}(x,y)}{\rho(x)} + \frac{\chi_{\{j<0\}}(x,y)}{\rho(y)}\right) \di\mu(x)\di\mu(y),
\end{align*}
which shows that
\begin{align}
    \left.\frac12 \frac{\di}{\di s}\right|_{s=0} F_\rho^2(\bj+s\bj_1) &= \frac12\iint_G j_1(x,y)j(x,y) \eta(x,y) \left(\frac{\chi_{\{j>0\}}(x,y)}{\rho(x)} + \frac{\chi_{\{j<0\}}(x,y)}{\rho(y)} \right) \di\mu(x)\di\mu(y) \nonumber\\
    &= g_{\rho,\bj}(\bj_1,\bj). \label{e:Minkowski:1st}
\end{align}
Note that this equality was used in Section \ref{subsec:HeuristicFinsler} to determine that $\grad^- \cE(\rho)$ is indeed the direction of steepest descent from $\rho$, i.e., to get \eqref{eq:min-H}. Computing now a further derivative in direction $\bj_2 \in T_\rho\mP_2(\Rd)$ and using similar boundedness arguments, we get
\begin{align*}
    &\left.\frac12 \frac{\partial^2}{\partial t\partial s}\right|_{s=t=0} F_\rho^2(\bj + s\bj_1 + t\bj_2)\\
    &\phantom{=}= \frac12\iint_G j_1(x,y)j_2(x,y) \eta(x,y) \left(\frac{\chi_{\{j>0\}}(x,y)}{\rho(x)} + \frac{\chi_{\{j<0\}}(x,y)}{\rho(y)}\right) \di\mu(x)\di\mu(y)\\
    &\phantom{==} + \frac12\iint_{\{j = 0\}} j_1(x,y)j_2(x,y) \eta(x,y) \left(\frac{\chi_{\{j_2>0\}}(x,y)}{\rho(x)} + \frac{\chi_{\{j_2<0\}}(x,y)}{\rho(y)}\right) \di\mu(x)\di\mu(y)\\
    &\phantom{=}= g_{\rho,\bj}(\bj_1,\bj_2) + \frac12\iint_{\{j = 0\}} j_1(x,y)j_2(x,y) \eta(x,y) \left(\frac{\chi_{\{j_2>0\}}(x,y)}{\rho(x)} + \frac{\chi_{\{j_2<0\}}(x,y)}{\rho(y)}\right) \di\mu(x)\di\mu(y).
\end{align*}
Note that the presence of the integral over the set $\{j=0\}$ comes from the fact that $j$ is not a multiplicative function of the integrand anymore (as it was the case for the first derivative), so that the set of points where $j = 0$ has to be considered. Assuming then that $\eta \, \mu \otimes \mu$-a.e.\ we have $\bj \neq 0$ we obtain that this integral over $\{j=0\}$ is equal to zero, which yields the claim.   
\end{appendix}
\subsection*{Acknowledgements}
The authors are grateful to Jos\'e Antonio Carrillo for several enlightening  discussions, to Mark Peletier for insightful remarks on an earlier version of the paper, and to Triphon Georgiou for valuable information.
The authors would like to thank the American Institute of Mathematics (AIM)
 for its support through the workshop \emph{Nonlocal differential equations in collective behavior}.
AE and AS gratefully acknowledge the support of the Hausdorff Research Institute for Mathematics (Bonn), through the Junior Trimester Program on \emph{Kinetic Theory}. AE acknowledges support by the EU-funded Erasmus Mundus programme ``MathMods - Mathematical
models in engineering: theory, methods, and applications'' at the University of L'Aquila.
AS is supported by the Deutsche Forschungsgemeinschaft (DFG, German Research Foundation) under Germany's Excellence Strategy EXC 2047
-- 390685813, the \emph{Hausdorff Center for Mathematics}, as well as the
Collaborative Research Center 1060 -- 211504053, \emph{The Mathematics
of Emergent Effects} at the Universität Bonn. DS is grateful to NSF for support via grants DMS 1516677 and DMS 1814991 and via Ki-Net (SF Research Network Grant RNMS 1107444). 
DS and FSP are grateful to the Center for Nonlinear Analysis of CMU for its support. 

\bibliography{references}
\bibliographystyle{abbrv}
\end{document}